\pdfminorversion=4
\documentclass[a4paper]{article}

\usepackage{import}
\usepackage{setspace}
\usepackage{framed}
\usepackage{color}
\usepackage{a4,amsmath,amssymb,amscd,latexsym} 
\usepackage{amsthm}
\usepackage{bbm}
\usepackage{epsfig,color} 
\usepackage{amsfonts}
\usepackage{mathrsfs}
\usepackage{overpic}
\usepackage{subcaption}
\usepackage{paralist}
\usepackage{graphicx}
\usepackage{trfsigns}
\usepackage{url}
\usepackage{stmaryrd}
\usepackage{booktabs}
\usepackage{algorithm, algorithmic}
\usepackage[algo2e, boxed, lined]{algorithm2e} 
\usepackage{hyperref}
\usepackage{cleveref}
\usepackage{scalerel}
\usepackage[export]{adjustbox}
\usepackage{subcaption}
\usepackage{eufrak}

\usepackage{tikz, pgfplots}
\usetikzlibrary{calc,trees,positioning,arrows,chains,shapes.geometric,%
	decorations.pathreplacing,decorations.pathmorphing,shapes,%
	matrix,shapes.symbols,backgrounds,shadows}

\usepackage{tikz-cd}

\newtheorem{definition}{Definition}
\newtheorem{corollary}{Corollary}
\newtheorem{theorem}{Theorem}

\newtheorem{remark}{Remark}
\newtheorem{proposition}{Proposition}


\newcommand\fat[1]{\ThisStyle{\ooalign{%
			\kern.46pt$\SavedStyle#1$\cr\kern.33pt$\SavedStyle#1$\cr%
			\kern.2pt$\SavedStyle#1$\cr$\SavedStyle#1$}}}

\newenvironment{@abssec}[1]{%
	\vspace{.06in}\footnotesize
	\parindent=0in
	\ignorespaces 
}

\newenvironment{keywords}{\begin{@abssec}{\keywordsname}}{\end{@abssec}}
\newenvironment{AMS}{\begin{@abssec}{AMS subject classification:}}{\end{@abssec}}

\overfullrule=0mm

\setlength{\abovecaptionskip}{1ex}
\setlength{\belowcaptionskip}{1ex}
\setlength{\floatsep}{1ex}
\setlength{\textfloatsep}{1ex}

\title{\bf{Pre-Shape Calculus: Foundations and Application to Mesh Quality Optimization}}

\author{Daniel Luft\thanks{Trier University, Department of Mathematics, 54286 Trier, Germany (\tt luft@uni-trier.de)}
 \and Volker Schulz\thanks{Trier University, Department of Mathematics, 54286 Trier, Germany (\tt volker.schulz@uni-trier.de)}
		} 



\newcommand{\R}{{\mathbb{R}}} 
\newcommand{\Manifold}{M}
\newcommand{\HoldAll}{\mathbb{D}}
\newcommand{\Diffeomorphism}{\rho}

\newcommand{\Shape}{\Gamma}
\newcommand{\ShapeEmbedding}{\varphi}

\newcommand{\TargetPrShp}{\mathfrak{J}}
\newcommand{\TargetShp}{\mathcal{J}}
\newcommand{\PrShpDeriv}{\mathfrak{D}}
\newcommand{\ShpDeriv}{\mathcal{D}}

\newcommand{\TangentSpaceShape}{\mathcal{T}}
\newcommand{\TangentVector}{\tau}
\newcommand{\NormalSpaceShape}{\mathcal{N}}

\newcommand{\diff}{\mathrm{d}}

\newcommand{\ShapeAdmissibleSet}{\mathcal{A}}
\newcommand{\ProjectionCanonical}{\pi}

\newcommand{\RieszEnergyExtForce}{q}


\begin{document}

%
%
%
%
%

\maketitle
\begin{abstract}
\noindent
Deformations of the computational mesh arising from optimization routines usually lead to decrease of mesh quality or even destruction of the mesh. 
We propose a theoretical framework using pre-shapes to generalize classical shape optimization and calculus. 
We define pre-shape derivatives and derive according structure and calculus theorems.
In particular, tangential directions are featured in pre-shape derivatives, in contrast to classical shape derivatives featuring only normal directions. 
Techniques from classical shape optimization and -calculus are shown to carry over to this framework. 
An optimization problem class for mesh quality is introduced, which is solvable by use of pre-shape derivatives.
This class allows for simultaneous optimization of classical shape objectives and mesh quality without deteriorating the classical shape optimization solution. 
The new techniques are implemented and numerically tested for 2D and 3D. 
\end{abstract}

\begin{keywords}
	\textbf{Key words:} Shape Optimization, Mesh Quality, Mesh Deformation Method, Shape Calculus
\end{keywords}

\begin{AMS}
	\textbf{AMS subject classifications: }
	49Q10, 65M50, 90C48, 49J27
\end{AMS}



\section{Introduction}

Solutions of PDE constrained optimization problems, in particular problems where the desired control variable is a geometric shape, are only meaningful, if the state variables of the constraint can be calculated with sufficient reliability.
A key component of reliable solutions is given by quality of the computational mesh. 
It is well-known that poor quality of elements affect the stability, convergence, and accuracy of finite element and other solvers.

We propose a unified framework using so-called pre-shapes.
In this setting, both shape optimization and mesh quality optimization problems can be formulated at the same time. 
We give a class of problems called pre-shape parameterization tracking problems, which can act as regularizations for shape optimization problems.
These problems can be solved for volume and surface meshes with arbitrary dimension, and yield numerical algorithms similar to so called mesh deformation methods, which reallocate nodes of numerical meshes according to targeted element volumes to improve mesh quality.
At the same time, the proposed framework is suitable to derive a calculus mimicking classical shape calculus. 
This enables formulation of efficient routines solving shape optimization problems, which at the same optimize quality of the surface mesh representing the shape, without noticeable additional computational cost or interference with the original shape optimization problem.
With this, desired surface and surrounding volume mesh quality are ensured during shape optimization.

In this paper we will establish the theoretical foundations of pre-shape optimization and calculus, and show its connection to classical shape calculus.
We implement these methods in form of a pre-shape gradient descent to achieve a targeted quality of volume and surface meshes without a shape optimization target.
In \cite{luft2020pre2}, we build on achievements of this paper, and introduce theoretical and numerical results to solve shape optimization problems while simultaneously improving volume and shape mesh quality according to targeted node distributions.
The techniques elaborated in \cite{luft2020pre2} leave optimal shapes invariant and offer minimal additional numerical costs.
They also permit use of different metrics to represent gradients. We compare pre-shape mesh regularizations for various metrics in  \cite{luft2020pre2} for a hard to solve shape optimization problem.

\paragraph{Literature Review}
We give a brief overview of techniques related to the ones treated in this article.
Our methods are not related to mesh untangling and -relaxation, edge swapping or remeshing strategies such as  \cite{freitag1997combining, frey1999surface, algorri1996mesh, johnston1991automatic}. 
Of course, as there is a vast amount of literature concerning mesh generation and improvement strategies thereof, we can only give a selective overview.
Two very prominent classes of techniques for mesh quality improvement are the so called mesh deformation method and methods based on Laplacian smoothing.
Mesh deformation methods go back to a theoretical result initially proposed by Moser in \cite{moser1965volume}, extended by Banyaga \cite{banyaga1974formes} and by Dacorogna and Moser \cite{dacorogna1990partial}. 
This gave rise to mesh deformation methods pioneered in  \cite{liao1992new} by Liao and Anderson, which redistribute mesh vertices such that uniform cell volumes are achieved. 
Mesh deformation methods are powerful, because they prevent mesh tangling while offering precise control over the element volumes. 
The original method was further developed in various directions by Liao et. al \cite{bochev1996analysis, liu1998adaptive, cai2004adaptive, zhou2017novel}.
These advances allow to target non-uniform cell volume distributions, make deformation methods applicable to time-dependent problems, and shows its use in higher order mesh generation methods.
Also, the combination of multigrid- and mesh deformation methods was analyzed and implemented by Turek et. al. in \cite{wan2006numerical, grajewski2009mathematical, grajewski2010numerical}. 
A different family of mesh quality improvement techniques are those based on Laplacian smoothing \cite{field1988laplacian, freitag1997combining, shontz2003mesh, zhang2009surface}.
They do not necessarily track for cell volume distributions, but instead improve quality by averaging or smoothing vertex coordinates more or less specifically.
Several strategies for increasing mesh quality not based on mesh deformation methods mentioned or Laplacian smoothing exist.
In the context of shape optimization and -morphing, these include correcting for errors in Hadamard's theorem due to discretization \cite{etling2018first}, adding non-linear advection terms in shape gradient representations \cite{onyshkevych2020mesh}, approximating shape morphing by volume-preserving mean-curvature flows \cite{laurain2020optimal}, and use of techniques related to centroidal Voronoi reparameterization in combination with eikonal equation based non-linear advection terms for representation of shape gradients  \cite{schmidt2014two}.

\section{General Theory for Pre-Shape Calculus}\label{Section_GeneralTheory}
\paragraph[Pre-Shape Spaces]{Pre-Shape Spaces}
In order to provide theoretical grounds for the numerical procedures we are about to elaborate in the subsequent paper, we need to specify a framework for the objects, 'shapes', for which we seek to optimize. 
Several possible theories and techniques exist in order to precisely formulate the notion of shapes. 
For example, shapes can be viewed as sets together with corresponding characteristic functions in an ambient space, leading to an approach which emphasizes geometric measure theory as in \cite{Delfour-Zolesio-2001} by Delfour and Zol\'esio. 
We choose a different setting, namely a shape space approach using infinite dimensional differential geometry, since it naturally permits to view the shape and its parameterization at the same time. 
This is key to extend numerical routines in a way that optimizes parameterizations, i.e. meshes, in a desired way  without interfering with the shape optimization taking place.
For an excellent overview of shape spaces we refer to \cite{bauer2014overview}, from which we borrow several definitions for the following introduction to shape spaces considered in this article.


For the rest of this article, let $\Manifold$ be an $n$-dimensional, orientable, path-connected and compact $C^{k,\alpha}$- or $C^\infty$-manifold.
Further, we will use $\R^{n+1}$ as the ambient space for building our theory.
In particular only shapes of codimension $1$ are considered. 

Denote by $\operatorname{Diff}(\Manifold)$ the regular Lie-group of $C^\infty$-diffeomorphisms of $\Manifold$ onto itself. 
Then the space $B_e$ of unparameterized $C^\infty$-shapes in $\R^{n+1}$ is defined by (cf. \cite{MichorMumford2})
	\begin{equation}\label{Defi_Be}
		B_e(\Manifold, \R^{n+1}) := \operatorname{Emb}(\Manifold, \R^{n+1})/\operatorname{Diff}(\Manifold),
	\end{equation}
where $\operatorname{Emb}(\Manifold, \R^{n+1})$ is the space of all $C^\infty$-embeddings of $\Manifold$ into $\R^{n+1}$ and $\operatorname{Diff}(\Manifold)$ is acting on the right.
For avoidance of confusion, we remember that in this context a $C^\infty$-embedding is an injective, smooth map $\ShapeEmbedding: \Manifold \rightarrow \R^{n+1}$, which has injective first derivative everywhere, i.e. $\ShapeEmbedding$ is an injective immersion.
The resulting space $B_e(\Manifold, \R^{n+1})$, also called non-linear Grassmannian (cf. \cite[44.21ff]{kriegl1997convenient}) or differentiable Chow-variety.
It forms a smooth Hausdorff manifold, whose elements can be regarded as unparameterized hypersurfaces of $\R^{n+1}$ (cf. \cite[Corollary 3.3]{michor2007overview}). 
In the following we will abbreviate $B_e(\Manifold, \R^{n+1})$ by $B_e^n$, still having the implicit relation to the manifold $\Manifold$ and its dimension in mind.
This space can be equipped with various metrics, which means shape optimization can be regarded as optimization on an infinite dimensional Riemannian manifold (cf. \cite{Schulz}).
Notice that $B_e^n$ is not a manifold if $C^\infty$-regularity is replaced by H\"older- or Sobolev-regularity. 
In this more general setting, resulting spaces have a diffeological structure (cf. \cite{welker2017suitable}).
To acquire intuition, a graphical visualization of $B^n_e$ is given in \cref{Fig_BeSpace}.

\begin{figure}[h!]\centering
\vspace{5.5cm}
\def\svgwidth{100.pt}
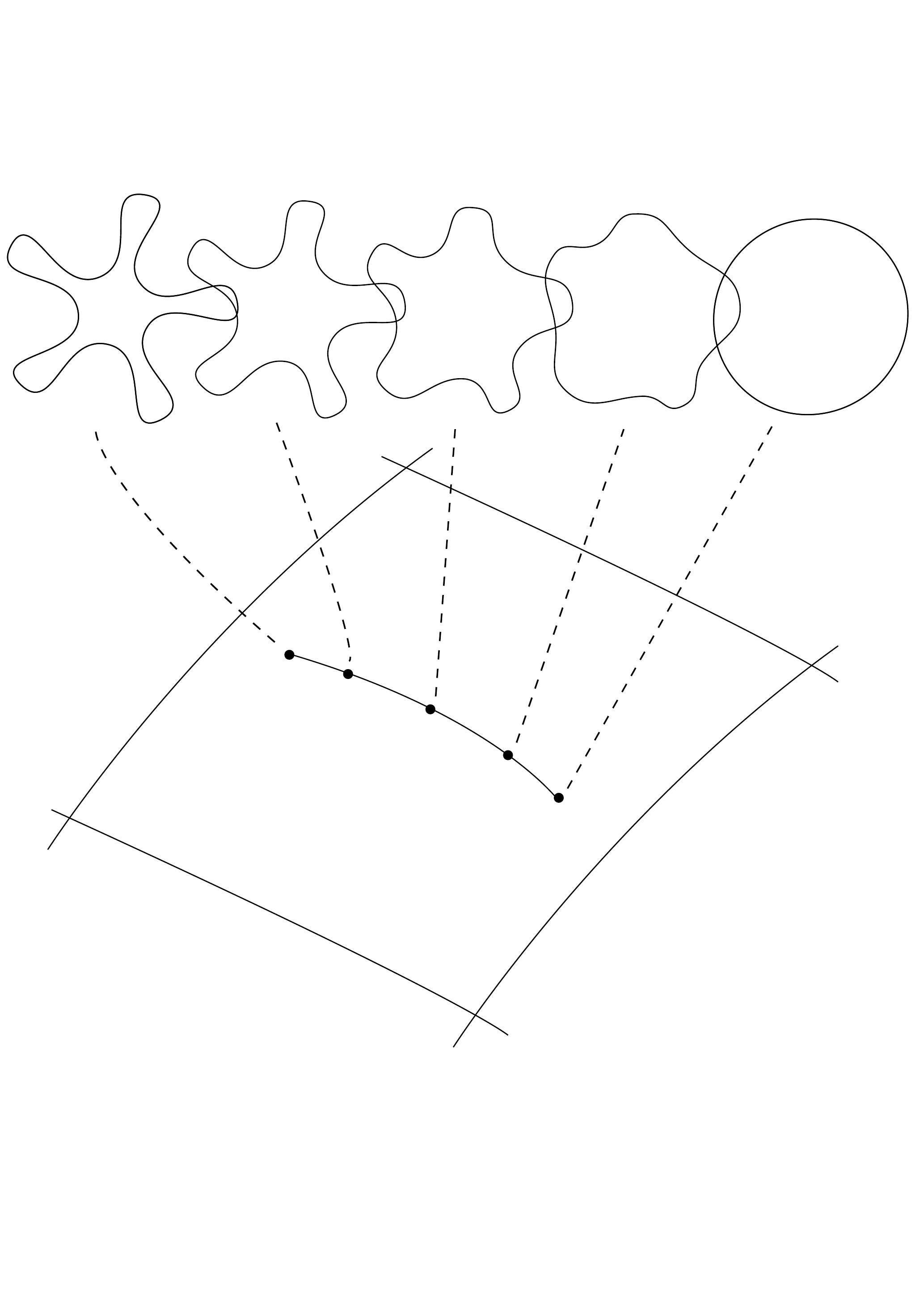
\caption{\label{Fig_BeSpace}Illustration of a path in the shape space $B_e^n$ for $\Manifold=S^1$.}
\end{figure}

For our purposes it is not enough to view shape optimization as optimization in $B_e^n$. 
Instead, we will exploit additional structures on the space $\operatorname{Emb}(\Manifold,\R^{n+1})$ induced by the action of $\operatorname{Diff}(\Manifold)$ and base our framework as optimization in $\operatorname{Emb}(\Manifold,\R^{n+1})$. 
Elements  $\ShapeEmbedding\in\operatorname{Emb}(\Manifold,\R^{n+1})$ can be interpreted as parameterized shapes in the ambient space $\R^{n+1}$, whereas elements of $\operatorname{Diff}(\Manifold)$ acting on the right can be seen as reparameterizations.
The authors of \cite[Ch. 1.1]{bauer2014overview} call $\operatorname{Emb}(\Manifold,\R^{n+1})$ \emph{pre-shape space}, an expression we will borrow for the techniques we will build in this paper.
Notice that the term pre-shape space is used differently depending on the literature, e.g. in \cite{kendall2009shape}, where the authors use this term for the space of labeled landmarks which are equivalent under translation and scaling.

The additional structure of parameterized shapes $\ShapeEmbedding\in \operatorname{Emb}(\Manifold,\R^{n+1})$ compared to unparameterized shapes $\Shape \in B_e^n$ enables to not just view the shape itself, but also to distinguish various types of discretizations in the ambient space and corresponding numerical meshes. 
Even further, this concept enables to control the parameterization of the hold-all domain itself, allowing for control of the way volume meshes are discretized.
The structure for this is given by the fact (cf. \cite[Thrm. 44.1]{kriegl1997convenient}, \cite{binz1981manifold}) that the quotient map 
	\begin{equation}\label{Defi_FiberBundleEmb}
		\ProjectionCanonical: \operatorname{Emb}(\Manifold, \R^{n+1}) \rightarrow B_e(\Manifold, \R^{n+1})
	\end{equation}	
makes $\operatorname{Emb}(\Manifold, \R^{n+1})$ the \emph{total space of the smooth principal fibration} with $\operatorname{Diff}(\Manifold)$ acting as the \emph{structure group} or \emph{standard fiber}, and $B_e^n$ being the \emph{base space}, which goes back to \cite{binz1981manifold}. 
As a reminder, a fiber bundle is a manifold, which locally looks like a product space $B\times F$, where $B$ corresponds to the base space, and $F$ corresponds to the standard fiber.
In our context, this means the pre-shape space $\operatorname{Emb}(\Manifold, \R^{n+1})$ is the collection of parameterized shapes, which locally looks like 'Shape'$\times$'Parameterization'.
However, this relationship holds only locally, and the global structure of the pre-shape space $\operatorname{Emb}(\Manifold, \R^{n+1})$ is much more complex.
The situation is graphically sketched in \cref{Fig_EmbSpace}.
\begin{figure}[h!]\centering
	\vspace{-4.5cm}
	\def\svgwidth{350.pt}
	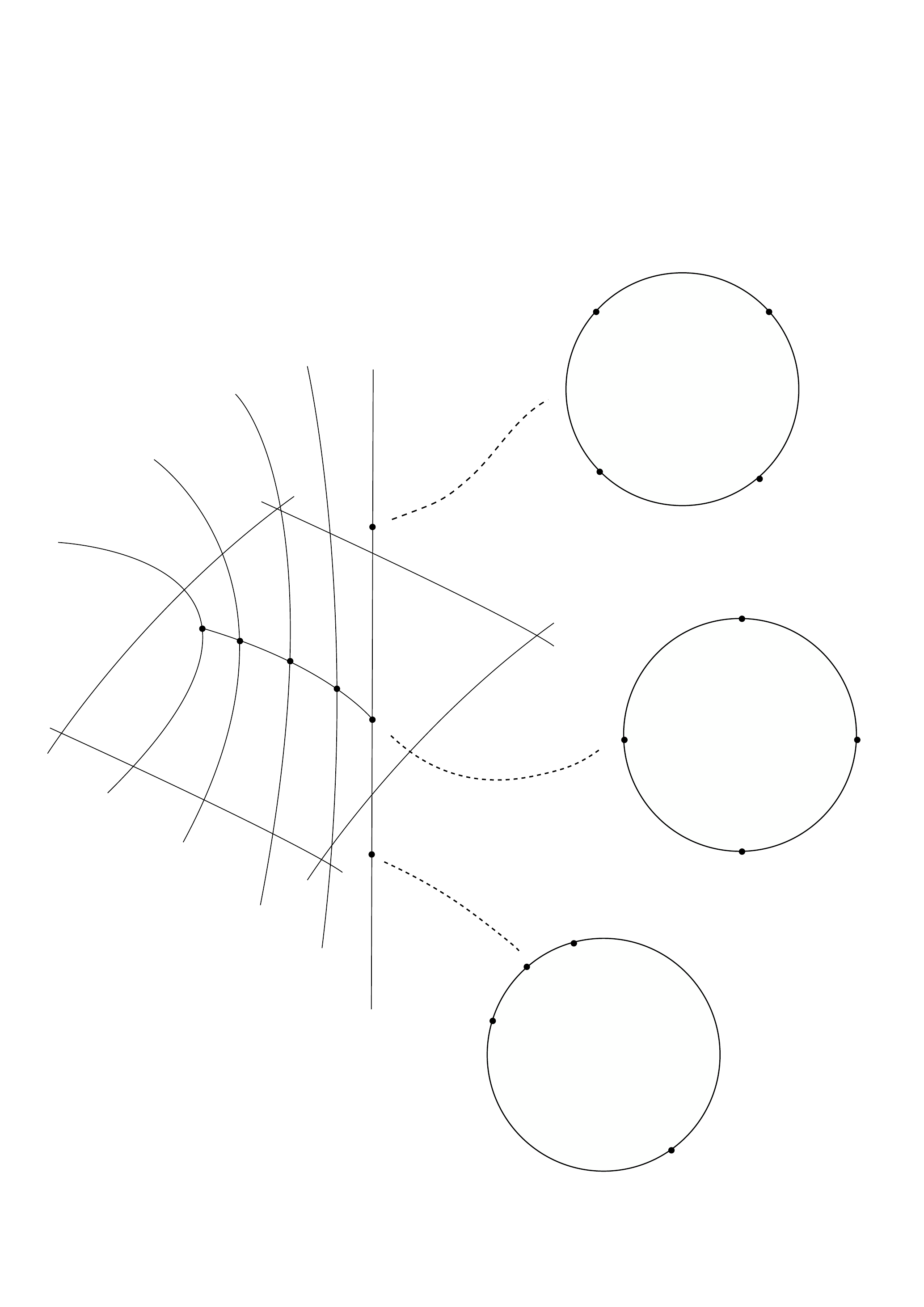
	\caption{\label{Fig_EmbSpace}Illustration of the pre-shape space $\operatorname{Emb}(\Manifold, \R^{n+1})$ for $\Manifold=S^1$. To illustrate the parameterization interpretation of fibers $\ProjectionCanonical(\ShapeEmbedding)$, the same four points are mapped from $\Manifold$ to $\ShapeEmbedding_i(\Manifold)$.}
\end{figure}
An application of the bundle projection $\ProjectionCanonical$ to a parameterized shape $\ShapeEmbedding\in\operatorname{Emb}(\Manifold, \R^{n+1})$ results in its unparameterized shape $\ProjectionCanonical(\ShapeEmbedding)\in B_e^n$ in the base space. 
Hence, we can view the fiber $\ProjectionCanonical(\ShapeEmbedding)$ as the collection of all parameterizations of the shape $\ShapeEmbedding(\Manifold)$.
It is important to avoid confusion of $\ShapeEmbedding(\Manifold)$ and $\ProjectionCanonical(\ShapeEmbedding)$, which are both called shapes.
The first interprets shapes as \emph{subsets} $\ShapeEmbedding(\Manifold)\subset\R^{n+1}$, the latter as  \emph{equivalence-classes}, i.e. 
\begin{equation}
	\ProjectionCanonical(\ShapeEmbedding) := \{\psi\in\operatorname{Emb}(\Manifold, \R^{n+1}):\; \exists \Diffeomorphism\in\operatorname{Diff}(\Manifold) \; s.t.\; \ShapeEmbedding=\psi\circ\Diffeomorphism \} \in B_e^n.
\end{equation}
The equivalence class interpretation is the collection of parameterizations corresponding to a certain shape in $\R^{n+1}$.

In order to formulate an analogue of shape calculus in $\operatorname{Emb}(\Manifold,\R^{n+1})$, we need to characterize the tangential bundles $T\operatorname{Emb}(\Manifold,\R^{n+1})$ and $TB_e^n$, as well as their relations.
For this, we make use of results by Michor and Kriegl \cite{kriegl1997convenient}. 

Since we assume $\Manifold$ to be compact, the respective tangent bundle of the pre-shape space is isomorphically given by
	\begin{equation}
		T_\ShapeEmbedding\operatorname{Emb}(\Manifold,\R^{n+1}) \cong C^\infty(\ShapeEmbedding(\Manifold), \R^{n+1}) \quad \forall \ShapeEmbedding\in\operatorname{Emb}(\Manifold, \R^{n+1}).
	\end{equation}
The fiber-bundle structure leads to a decomposition of the tangent bundle of the total space $T\operatorname{Emb}(\Manifold,\R^{n+1})$ into 
the so called \emph{vertical bundle}, defined as $\ker T\ProjectionCanonical \subset T\operatorname{Emb}(\Manifold,\R^{n+1})$, and the \emph{horizontal bundle}. 
Since we only deal with compact and orientable $n$-dimensional manifolds $\Manifold$, the existence of outer normal vector-fields $n$ on $\ShapeEmbedding(\Manifold)\subset \R^{n+1}$ is guaranteed.
In the following, let $\langle .,. \rangle_2$ denote the $L^2$-scalar product.
Thus we obtain
	\begin{equation}\label{Defi_DecompositionTangentEmb}
		T_\ShapeEmbedding\operatorname{Emb}(\Manifold,\R^{n+1}) \cong \TangentSpaceShape_{\ShapeEmbedding(\Manifold)} \oplus \NormalSpaceShape_{\ShapeEmbedding(\Manifold)} \quad \forall \ShapeEmbedding \in \operatorname{Emb}(\Manifold,\R^{n+1}),
	\end{equation}
where 
	\begin{equation}
		\TangentSpaceShape_{\ShapeEmbedding(\Manifold)} := \{h \in C^\infty(\ShapeEmbedding(\Manifold), \R^{n+1}): \langle h, n \rangle_{2} = 0 \; \text{ on } \ShapeEmbedding(\Manifold)\}
	\end{equation} 
is the \emph{space of tangential vector fields on $\ShapeEmbedding(\Manifold)$} in the ambient space $\R^{n+1}$ and 
	\begin{equation}
		\NormalSpaceShape_{\ShapeEmbedding(\Manifold)} := \{h\in C^\infty(\ShapeEmbedding(\Manifold), \R^{n+1}): h = \alpha\cdot n,\; \alpha \in C^\infty(\ShapeEmbedding(\Manifold), \R) \} 
	\end{equation}
being the \emph{space of normal vector fields on $\ShapeEmbedding(\Manifold)$}.
The tangential fields are parts of the vertical bundle, whereas the normal fields constitute the horizontal bundle part.
This also gives the well-known characterization of the tangential bundle of the classical shape space $B_e^n$ via normal vector fields, i.e.
	\begin{equation}\label{Defi_TangentBundleBe}
		T_{\ProjectionCanonical(\ShapeEmbedding)} B_e^n \cong \NormalSpaceShape_{\ShapeEmbedding(\Manifold)} \cong C^\infty(\ShapeEmbedding(\Manifold), \R)  \quad \forall \ProjectionCanonical(\ShapeEmbedding) \in B_e^n.
	\end{equation}
As previously, we also visualize the situation for tangential bundles in pictures \cref{Fig_TangBeSpace} and \cref{Fig_TangEmbSpace}.

\begin{figure}[h!]\centering
	\vspace{-8.25cm}
	\def\svgwidth{350.pt}
	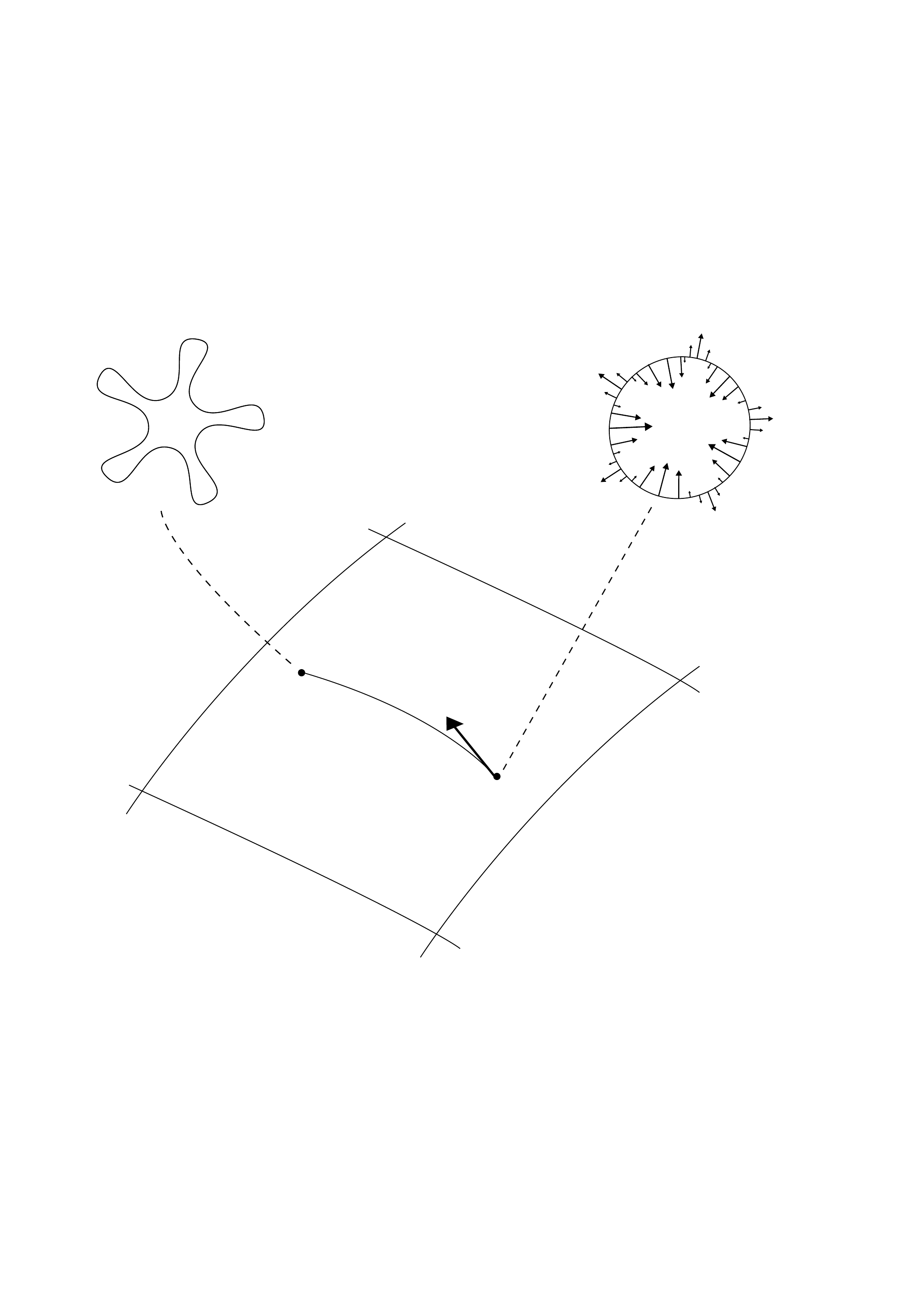
	\caption{	\label{Fig_TangBeSpace}Illustration of a shape tangential vector from  $TB_e^n$ for $\Manifold=S^1$.}
\end{figure}
\begin{figure}[h!]\centering
	\vspace{-4.85cm}
	\def\svgwidth{350.pt}
	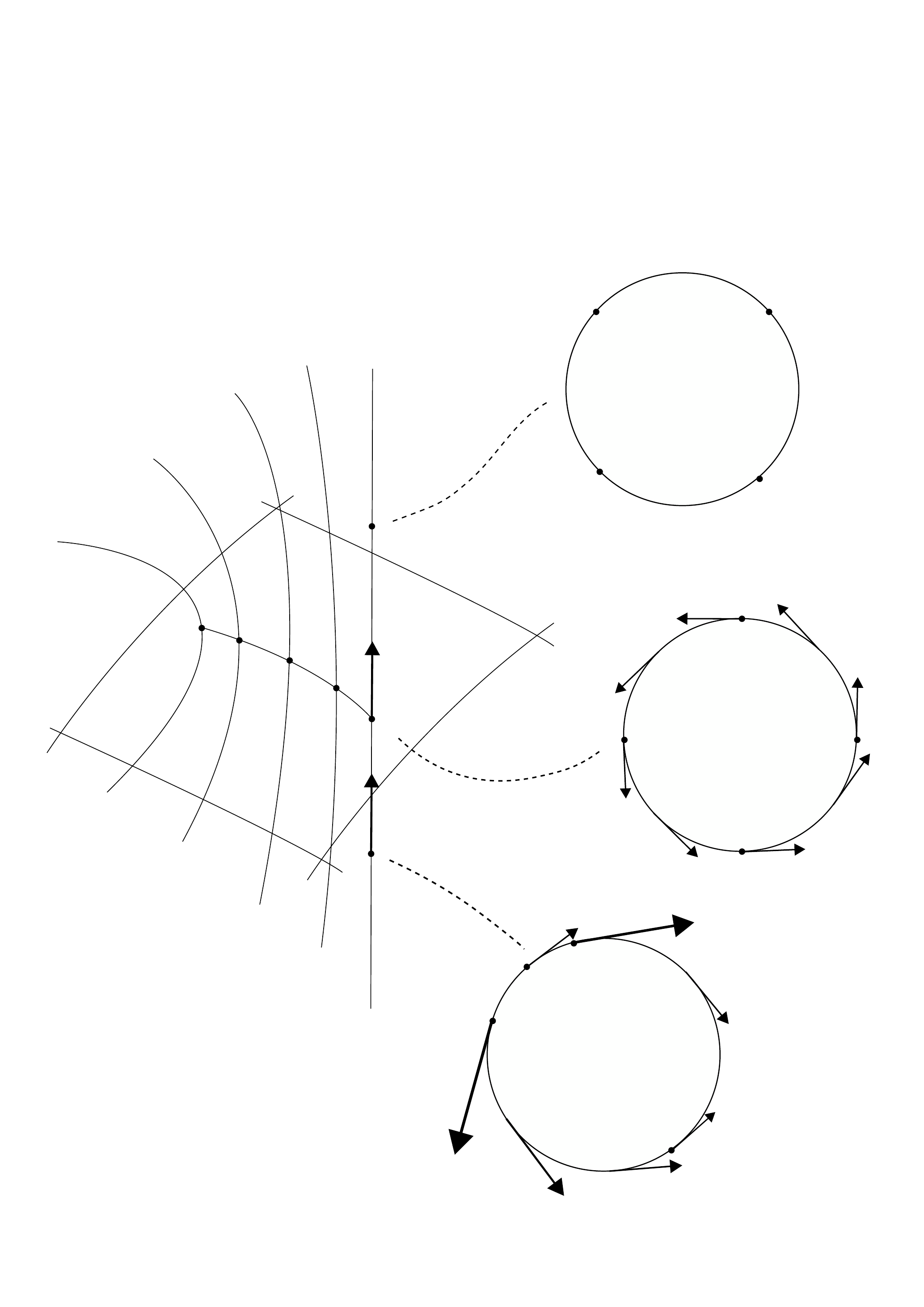
	\caption{	\label{Fig_TangEmbSpace}Illustration of vectors from  $T\operatorname{Emb}(\Manifold, \R^{n+1})$ with pure tangential/vertical components for $\Manifold=S^1$. Note that four points are added to illustrate the parameterization interpretation of fibers $\ProjectionCanonical(\ShapeEmbedding)$.}
\end{figure}
\paragraph[Pre-Shape Calculus]{Pre-Shape Calculus}
Next, we introduce a suitable notion of objective functionals. 
We are inspired by \cite[Ch. 4.3.1]{Delfour-Zolesio-2001}, where shape functionals $\TargetShp$ are functions on a set of admissible shapes $\ShapeAdmissibleSet$, which are considered to be a subset of the power set $\ShapeAdmissibleSet\subseteq\mathcal{P}(\R^{n+1})$. 
This is a set-theoretic approach, because the power set $\mathcal{P}(\R^{n+1})$ is the set of all subsets of $\R^{n+1}$.
Since we can canonically associate every equivalence class  $\ProjectionCanonical(\ShapeEmbedding)\in B_e^n$ with its set  $\ShapeEmbedding(\Manifold)\subset\R^{n+1}$, we get the following definition for the special set of admissible shapes $B_e^n$.
\begin{definition}[Shape and Pre-Shape Functionals]\label{Defi_ShapeFunctionals}
		Let $\Manifold$ be an $n$-dimensional, orientable, path-connected and compact $C^\infty$-manifold. 			
		Consider the shape space $B_e^n$ as defined in \cref{Defi_Be} and the space of embeddings $\operatorname{Emb}(\Manifold, \R^{n+1})$. 
		
		Then a function
			\begin{equation}\label{Defi_ShapeFunctional}
				\TargetShp: B_e^n \rightarrow \R
			\end{equation}
		is called \emph{shape functional}, and a function
			\begin{equation}\label{Defi_PreShapeFunctional}
				\TargetPrShp: \operatorname{Emb}(\Manifold, \R^{n+1}) \rightarrow \R
			\end{equation}
		is called \emph{pre-shape functional}.
\end{definition}
The nomenclature pre-shape functional for functions as in \cref{Defi_PreShapeFunctional} is motivated by regarding  $\operatorname{Emb}(\Manifold,\R^{n+1})$ as a pre-shape space as done by Michor et. al. in \cite[Ch. 1.1]{bauer2014overview}. 
Since optimization is classically taking place in shape spaces as opposed to pre-shape spaces, we will highlight some of their correspondences and relations. 
The following definition is motivated by the construction of the shape space $B_e^n$ in \cref{Defi_Be}.
	\begin{definition}[Shape Functionality]\label{Defi_ShapeFunctionalityPreShFctnl}
		Let $\TargetPrShp$ be a pre-shape functional and let $\ShapeEmbedding\in \operatorname{Emb}(\Manifold,\R^{n+1})$. 
		We say $\TargetPrShp$ has \emph{shape functionality in $\ShapeEmbedding$} if it is consistent with the fiber projection, i.e.
			\begin{equation}
				\TargetPrShp(\ShapeEmbedding\circ \Diffeomorphism) = \TargetPrShp(\ShapeEmbedding) \quad \forall \Diffeomorphism \in \operatorname{Diff}(\Manifold).
			\end{equation}
		If $\TargetPrShp$ has shape functionality for all $\ShapeEmbedding\in \operatorname{Emb}(\Manifold,\R^{n+1})$, we say \emph{$\TargetPrShp$ has shape functionality}.
	\end{definition}

In order to give optimality criteria for the pre-shape optimization problems and to formulate derivative based optimization algorithms, we need to introduce a shape derivative analogue for $\operatorname{Emb}(\Manifold, \R^{n+1})$.
Also, it is desirable that the analogue is compatible with the classical Eulerian derivative, 
as for example found in \cite[Ch. 4.3.2]{Delfour-Zolesio-2001} or \cite[Ch. 2.1]{schulz2015Steklov}. 
This motivates us to proceed in the fashion of classical shape optimization by defining a \emph{pre-shape derivatives} based on families of deformations perturbing the image space. 
We show their relation to classical shape derivatives, and then give a structure theorem similar to the Hadamard-Zol\'{e}sio structure theorem (cf. \cite[Ch. 9, Thrm. 3.6] {Delfour-Zolesio-2001}). 
Shape calculus or sensitivity analysis of classical shape optimization (cf. \cite[Ch. 3]{HaslingerMakinen}, \cite{berggren2010unified}) will carry over to pre-shape spaces naturally.
\begin{remark}[Validity of Pre-Shape Theory for different Regularities of Shapes]\label{Remark_DifferentReguSettings}
	We want to remind the reader, that the choice of $C^\infty$-regularity for pre-shapes in $\operatorname{Emb}(\Manifold, \R^{n+1})$ is not necessary to introduce the concepts of this section, but merely serves as an exemplary situation.
	It is clear, that the same definitions apply for embeddings $\ShapeEmbedding$ of Sobolev- or H\"older-regularity.
	In these cases test functions and directions $V$ of course need to have according regularity.
\end{remark}
\begin{definition}[Perturbation of Identity and Pre-shape Derivatives]\label{Defi_PreShapeDerivAndPertuOfId}
	Let $\TargetPrShp$ be a pre-shape functional (not necessarily having shape functionality), $\ShapeEmbedding\in\operatorname{Emb}(\Manifold, \R^{n+1})$ and $V\in C^{\infty}(\R^{n+1}, \R^{n+1})$. 
	Then the family of functions
	\begin{equation}\label{Defi_PertuOfIdPreShapes}
		\ShapeEmbedding_t := \ShapeEmbedding + t\cdot V\circ\ShapeEmbedding
	\end{equation}
	is called \emph{perturbation of identity of $\ShapeEmbedding$ in direction $V$} for $t\in[0,\tau)$ and some $\tau >0$. 
	The limit
	\begin{equation}\label{Defi_PreShapeDeriv}
		\PrShpDeriv\TargetPrShp(\ShapeEmbedding)[V] := \lim_{t\downarrow0} \frac{\TargetPrShp(\ShapeEmbedding_t) - \TargetPrShp(\ShapeEmbedding)}{t}
	\end{equation}
	is called \emph{pre-shape derivative} for $\TargetPrShp$ at  $\ShapeEmbedding \in \operatorname{Emb}(\Manifold, \R^{n+1})$ in direction $V$, if it exists and is linear and bounded in $V$.
\end{definition}

The perturbation of identity for shapes at $\Gamma_0\subset \R^{n+1}$ in direction $V\in C^\infty(\R^{n+1},\R^{n+1})$ is defined by
\begin{equation}
	\Gamma_t := \{x_0 + t\cdot V(x_0): x_0 \in \Gamma_0  \}.
\end{equation}
Notice that this is a set, in contrast to the perturbation of identity for pre-shapes \cref{Defi_PertuOfIdPreShapes} which is a function in $\operatorname{Emb}(\Manifold, \R^{n+1})$.
Shape derivatives of a shape functional $\TargetShp$ are given by
\begin{equation}
	\ShpDeriv \TargetShp(\Gamma_0)[V] := \underset{t\downarrow0}{\lim} \frac{\TargetShp(\Gamma_t) -\TargetShp(\Gamma_0)}{t}.
\end{equation}
The difference quotients defining pre-shape and shape derivatives use completely different objects, therefore their difference is significant.
Their relationship is explored in \cref{Prop_ShapeDiffImpliesPreShapeDiff} and \cref{Theorem_PreShapeHadamard}.

The next proposition shows a result relating shape differentiability of classical shape optimization and pre-shape derivatives.

\begin{proposition}[Shape Differentiability implies Pre-Shape Differentiability]
	\label{Prop_ShapeDiffImpliesPreShapeDiff}
	Consider a shape functional $\TargetShp: B_e^n \rightarrow \R$. 
	Then it has a canonical extension to a pre-shape functional
	\begin{equation}\label{Eq_CanonicalExtensionToPreShp}
		\TargetPrShp: \operatorname {Emb}(\Manifold, \R^{n+1}) \rightarrow \R,\; \ShapeEmbedding \mapsto \TargetShp(\ProjectionCanonical(\ShapeEmbedding)),
	\end{equation} 
	where $\ProjectionCanonical$ is the bundle projection as in \cref{Defi_FiberBundleEmb}.
	Further, there is a one-to-one correspondence of shape functionals $\TargetShp$ and pre-shape functionals $\TargetPrShp$ with the property of shape functionality.
	Additionally, if $\TargetShp$ is shape differentiable in the classical sense, then its extension $\TargetPrShp$ is pre-shape differentiable.
\end{proposition}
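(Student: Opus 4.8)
The plan is to dispose of the three assertions in turn, the first two being essentially formal and the third carrying the analytic content. First I would establish the canonical extension and the bijection. The extension $\TargetPrShp := \TargetShp\circ\ProjectionCanonical$ is well-defined precisely because $\ProjectionCanonical$ is, and it automatically has shape functionality: for any $\Diffeomorphism\in\operatorname{Diff}(\Manifold)$ one has $\ProjectionCanonical(\ShapeEmbedding\circ\Diffeomorphism)=\ProjectionCanonical(\ShapeEmbedding)$, since $\ShapeEmbedding=\psi\circ\Diffeomorphism''$ implies $\ShapeEmbedding\circ\Diffeomorphism=\psi\circ(\Diffeomorphism''\circ\Diffeomorphism)$, so the two embeddings lie in the same fiber; hence $\TargetPrShp(\ShapeEmbedding\circ\Diffeomorphism)=\TargetShp(\ProjectionCanonical(\ShapeEmbedding\circ\Diffeomorphism))=\TargetShp(\ProjectionCanonical(\ShapeEmbedding))=\TargetPrShp(\ShapeEmbedding)$. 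Conversely, a pre-shape functional with shape functionality is, by \cref{Defi_ShapeFunctionalityPreShFctnl}, constant on the fibers of $\ProjectionCanonical$ (which are exactly the $\operatorname{Diff}(\Manifold)$-orbits); by the universal property of the quotient map \cref{Defi_FiberBundleEmb} it therefore factors uniquely as $\TargetPrShp=\TargetShp\circ\ProjectionCanonical$ for a unique $\TargetShp:B_e^n\rightarrow\R$. These two assignments are mutually inverse, giving the claimed one-to-one correspondence.

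For the differentiability statement, the key observation is that the pre-shape perturbation of identity and the set-theoretic shape perturbation agree at the level of images. Writing $\Shape_0:=\ShapeEmbedding(\Manifold)$ and evaluating the perturbation \cref{Defi_PertuOfIdPreShapes} pointwise, $\ShapeEmbedding_t(p)=\ShapeEmbedding(p)+t\,V(\ShapeEmbedding(p))$, so that $\ShapeEmbedding_t(\Manifold)=\{\,x_0+t\,V(x_0):x_0\in\Shape_0\,\}=\Shape_t$, exactly the perturbed shape. I would then note that, since $\ShapeEmbedding$ is an embedding and $\Manifold$ is compact, $\operatorname{Emb}(\Manifold,\R^{n+1})$ is open in $C^\infty(\Manifold,\R^{n+1})$, so the curve $t\mapsto\ShapeEmbedding_t$ remains in $\operatorname{Emb}(\Manifold,\R^{n+1})$ for all sufficiently small $t$; thus $\ProjectionCanonical(\ShapeEmbedding_t)$ is defined, and under the canonical identification of the fiber $\ProjectionCanonical(\ShapeEmbedding_t)$ with its image set $\ShapeEmbedding_t(\Manifold)=\Shape_t$ we obtain $\TargetPrShp(\ShapeEmbedding_t)=\TargetShp(\ProjectionCanonical(\ShapeEmbedding_t))=\TargetShp(\Shape_t)$.

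With this identity the difference quotient defining the pre-shape derivative collapses onto that of the shape derivative: for every admissible $V$,
\[
	\frac{\TargetPrShp(\ShapeEmbedding_t)-\TargetPrShp(\ShapeEmbedding)}{t}=\frac{\TargetShp(\Shape_t)-\TargetShp(\Shape_0)}{t},
\]
whose limit as $t\downarrow0$ exists by shape differentiability of $\TargetShp$ and equals $\ShpDeriv\TargetShp(\Shape_0)[V]$. Hence the pre-shape limit \cref{Defi_PreShapeDeriv} exists, with $\PrShpDeriv\TargetPrShp(\ShapeEmbedding)[V]=\ShpDeriv\TargetShp(\ShapeEmbedding(\Manifold))[V]$; linearity and boundedness in $V$ are inherited directly from the corresponding properties of the classical shape derivative (the direction $V$ ranging over the same space in both notions), so $\TargetPrShp$ is pre-shape differentiable.

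I expect the main obstacle to be not the computation but the bookkeeping of well-definedness: one must verify that $\ShapeEmbedding_t$ remains an embedding (so that $\ProjectionCanonical(\ShapeEmbedding_t)$, and hence $\TargetPrShp(\ShapeEmbedding_t)$, are defined on a whole interval $[0,\tau)$) and that the identification of the fiber $\ProjectionCanonical(\ShapeEmbedding_t)$ with the set $\Shape_t$ is the same identification used to define shape functionals on $B_e^n$. Both points reduce to compactness of $\Manifold$ together with the set-versus-equivalence-class discussion following \cref{Defi_FiberBundleEmb}.
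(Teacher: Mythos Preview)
Your proposal is correct and takes a genuinely different route from the paper. For the differentiability assertion, the paper performs a case analysis: it decomposes directions $V$ at $\ShapeEmbedding(\Manifold)$ into normal and tangential parts, argues that for normal $V$ the pre-shape derivative recovers $\ShpDeriv\TargetShp$, and that for tangential $V$ it vanishes because $\TargetShp\circ\ProjectionCanonical$ is constant along fibers; linearity and boundedness are then patched together from these two cases. Your argument is more direct: you observe that $\ShapeEmbedding_t(\Manifold)=\Shape_t$ as sets, so under the canonical identification of $\ProjectionCanonical(\ShapeEmbedding_t)$ with $\ShapeEmbedding_t(\Manifold)$ the two difference quotients are literally equal for every $V$, and existence, linearity and boundedness of the pre-shape limit are inherited in one stroke from the classical shape derivative. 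Your route is cleaner and avoids the slightly delicate point (implicit in the paper) of why the perturbation-of-identity curve for tangential $V$ may be replaced by a curve inside the fiber when computing the derivative. The paper's decomposition, on the other hand, has the expository advantage of previewing the content of \cref{Theorem_PreShapeHadamard}(iii), where the vanishing of the tangential component $g^{\TangentSpaceShape}$ is exactly what characterizes shape functionality.
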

\begin{proof}
	One-to-one correspondence of pre-shape functionals with the property of shape functionality as in \cref{Defi_ShapeFunctionalityPreShFctnl} and classical shape functionals as in \cref{Defi_ShapeFunctional} is clear. 
	On the one hand, every canonical extension \cref{Eq_CanonicalExtensionToPreShp} of a classical shape functional $\TargetShp$ has shape functionality. 
	On the other hand, every pre-shape functional $\tilde{\TargetPrShp}$ having shape functionality gives rise to a shape functional $\TargetShp$ fulfilling \cref{Eq_CanonicalExtensionToPreShp}, as every fiber $\ProjectionCanonical(\ShapeEmbedding)$ is the orbit of a $\ShapeEmbedding$ by $\operatorname{Diff}(\Manifold)$
	acting from the right. 
	
	The pre-shape differentiability assertion in \cref{Prop_ShapeDiffImpliesPreShapeDiff} holds, since $\TargetShp\circ\ProjectionCanonical$ extends the values of $\TargetShp$ constantly onto the fibers of $\operatorname{Emb}(\Manifold, \R^{n+1})$. 
	For a fixed $\ShapeEmbedding\in \operatorname{Emb}(\Manifold, \R^{n+1})$, a case analysis for directions $V\in C^\infty(\R^{n+1}, \R^{n+1})$ being either tangential or normal at $\ShapeEmbedding(\Manifold)$ can be made.
	In case of horizontal, i.e. normal, directions $V$ we recover the classical shape derivative $\ShpDeriv\TargetShp$.
	On the other hand, the pre-shape derivative in vertical directions can be represented as a differential via curves on $\ShapeEmbedding(\Manifold)$, which combined with $\TargetShp\circ\ProjectionCanonical$ extending $\TargetShp$ constantly on fibers gives a vanishing pre-shape derivative.
	Linearity and boundedness of $\PrShpDeriv(\TargetShp\circ\ProjectionCanonical)(\ShapeEmbedding)[V]$ in $V$ are easy to see due to its vanishing for tangential components of $V$ together with linearity and boundedness of the classical shape derivative $\ShpDeriv\TargetShp$ by assumption.
\end{proof}

We can now situate classical shape optimization problems in the context of optimization in pre-shape spaces $\operatorname{Emb}(\Manifold, \R^{n+1})$ for suitable manifolds $\Manifold$. 
But first, we observe that a unique solution $\ShapeEmbedding(\Manifold)$ of a shape optimization problem has multiple parameterizations in general. 
For shape optimization problems posed in the pre-shape space $\operatorname{Emb}(\Manifold,\R^{n+1})$, this leads to non-uniqueness of solutions, which might at first seem like a disadvantage.
However, due to non-uniqueness up to elements in the solution fiber $\ProjectionCanonical(\ShapeEmbedding)$, it is possible to demand additional properties for the pre-shape solution.
This gives several opportunities to enhance numerical shape optimization routines, while at the same time narrowing down the amount of non-uniqueness of pre-shape solutions to a reasonable level. 
We will exploit this in upcoming works, such as \cite{luft2020pre2}.
For example, increasing mesh quality while not changing the shape at hand can be viewed as a condition posed on a shape optimization problem selecting a pre-shape in a given fiber.
Proposition \ref{Prop_ShapeDiffImpliesPreShapeDiff} also offers a possibility to transfer results concerning shape differentiability of classical shape functionals to the pre-shape setting without the need for new proofs. 
In particular, existence of stationary points in $B_e^n$ is carried over to $\operatorname{Emb}(\Manifold, \R^{n+1})$ as existence of stationary fibers. 
Hence \cref{Prop_ShapeDiffImpliesPreShapeDiff} shows that  pre-shape optimization is in some sense a canonical  generalization of classical shape optimization.

The definition of material- and shape derivatives found in classical shape optimization and structural sensitivity analysis literature (cf. \cite[Definition 1, Definition 2]{berggren2010unified}, \cite[Ch. 3.3.1]{HaslingerMakinen}) possesses useful properties for practical applications. 
In particular, through the use of material derivatives, it is often straightforward to derive expressions for shape derivatives of integral quantities.
We proceed by extending the notion of material derivatives from the classical context to the pre-shape calculus framework to harness these practical benefits.

\begin{definition}[Pre-Shape Material Derivative]\label{Defi_PreShapeMaterialDerivDefi}
	Consider a family of functions $\{f_\ShapeEmbedding: \R^{n+1} \rightarrow \R\}_{ \ShapeEmbedding\in \operatorname{Emb}(\Manifold, \R^{n+1})}$.
	For a direction $V\in C^\infty(\R^{n+1},\R^{n+1})$, we define the \emph{pre-shape material derivative} in $x_0\in \R^{n+1}$ by 
	\begin{equation}\label{Defi_PreShapeMaterialDeriv}
		\PrShpDeriv_mf (\ShapeEmbedding)[V](x_0) := \frac{\diff}{\diff t}_{\vert t=0} f_{\ShapeEmbedding_t}(x_t),
	\end{equation}
	if the limit exists. 
	Here, $\ShapeEmbedding_t$ is the perturbation of identity for pre-shapes (cf. \cref{Defi_PertuOfIdPreShapes}) and $x_t = x_0 + t\cdot V(x_0)$ is a perturbed point.
\end{definition}
The careful reader might notice the similarity of classical shape and pre-shape material derivatives.
However, the main difference is a possible dependence of functions $f$ on parameterizations of shapes/domains they are defined for.
Still, both notions coincide if the pre-shape functional has shape functionality, as we will see in \cref{Cor_MaterialDeriv} coming from the main structure \cref{Theorem_PreShapeHadamard}.

The definition of the material derivative can be generalized to functions and domains of weaker regularity, such as Sobolev functions and open subset $\Omega\subset \R^{n+1}$ with Lipschitz boundaries.
A necessity for this comes from the fact, that state solutions stemming from PDE constrained shape optimization problems need a well-defined material derivative for sensitivity analysis to be convenient.
This is done in the same way as with the classical shape material derivative (cf. \cite[p. 111]{HaslingerMakinen}).

It is important to notice that the family $f_\ShapeEmbedding$ can be seen as a function
$f: \operatorname{Emb}(\Manifold, \R^{n+1})\times \R^{n+1} \rightarrow \R$.
In the first component, the perturbation of identity for pre-shapes comes into play, which differs from the classical shape-material derivative. 
This leads to the following decomposition of the material derivative, which is similar to classical shape calculus, e.g. given by Haslinger and M\"akinen in \cite[p. 111, (3.39)]{HaslingerMakinen},
\begin{equation}\label{Eq_MaterialDerivDecompo}
	\PrShpDeriv_mf(\ShapeEmbedding)[V] = \PrShpDeriv f(\ShapeEmbedding)[V] + \nabla f_\ShapeEmbedding^T V.
\end{equation}
In the following we give a characterization of the pre-shape derivative in the style of the Hadamard-Zol\'{e}sio structure theorem as found in \cite[Ch. 9, Thm. 3.6]{Delfour-Zolesio-2001}.
For explanations concerning the use of distributions, as we do in the following, the reader can consult \cite[Remark 6.2, Defi. 6.22, Defi. 6.34, Thrm. 7.10, Ex. 7.12]{rudin1991functional}.
\begin{theorem}[Structure Theorem for Pre-Shape Derivatives]\label{Theorem_PreShapeHadamard}$\;$\newline
	Let $\TargetPrShp: \operatorname{Emb}(\Manifold, \R^{n+1}) \rightarrow \R$ be a pre-shape differentiable pre-shape functional (not necessarily having shape functionality)  and let $\ShapeEmbedding\in \operatorname{Emb}(\Manifold, \R^{n+1})$. Denote by $n_{\ShapeEmbedding(\Manifold)}$ the outer normal vector field of a shape  $\ShapeEmbedding(\Manifold)$ for a $\ShapeEmbedding\in \operatorname{Emb}(\Manifold, \R^{n+1})$.
	
	Then the following holds:
	\begin{itemize}
		\item[(i)] 
		The support of $\PrShpDeriv\TargetPrShp(\ShapeEmbedding)$ is given by
		\begin{equation}\label{Thrm_SuppPreShDir}
		\operatorname{supp} \PrShpDeriv\TargetPrShp(\ShapeEmbedding) \subseteq  \{V\in C^\infty(\R^{n+1}, \R^{n+1}): \ShapeEmbedding(\Manifold) \cap \operatorname{supp} V \neq \varnothing \}.
		\end{equation}
		\item[(ii)] There exist continuous linear functionals $g^{\TangentSpaceShape}: C^\infty(\R^{n+1}, \R^{n+1}) \rightarrow \R$ and $g^{\NormalSpaceShape}: C^\infty(\R^{n+1}, \R^{n+1}) \rightarrow \R$ depending on $\ShapeEmbedding$, which are tempered distributions when restricted to $C_c^\infty(\R^{n+1}, \R^{n+1})$ (cf. \cite[Ch. 6.1]{rudin1991functional} for definitions), with support on $\ShapeEmbedding(\Manifold)$ such that
		\begin{equation}\label{Thrm_HadamardSplittingPreShDir}
			\PrShpDeriv\TargetPrShp(\ShapeEmbedding)[V] = \big\langle g^{\NormalSpaceShape}, V \big\rangle + \big\langle g^{\TangentSpaceShape}, V \big\rangle \qquad \forall V\in C^\infty(\R^{n+1},\R^{n+1})
		\end{equation}
		with
		\begin{equation}\label{Eq_SupportNormalDistr}
			\operatorname{supp} g^{\NormalSpaceShape} \subseteq \operatorname{supp}\PrShpDeriv\TargetPrShp(\ShapeEmbedding)\cap \{V\in C_c^\infty(\R^{n+1},\R^{n+1}): \operatorname{Tr}_{\vert\ShapeEmbedding(\Manifold)}[V] \in \NormalSpaceShape_{\ShapeEmbedding(\Manifold)} \}
		\end{equation} 
		and
		\begin{equation}\label{Eq_SupportTangDistr}
			\operatorname{supp} g^{\TangentSpaceShape} \subseteq \operatorname{supp}\PrShpDeriv\TargetPrShp(\ShapeEmbedding)\cap \{V\in C_c^\infty(\R^{n+1},\R^{n+1}): \operatorname{Tr}_{\vert\ShapeEmbedding(\Manifold)}[V] \in \TangentSpaceShape_{\ShapeEmbedding(\Manifold)} \},
		\end{equation} 
		where $\operatorname{Tr}_{\vert\ShapeEmbedding(\Manifold)}: C^\infty(\R^{n+1}, \R^{n+1}) \rightarrow C^\infty(\ShapeEmbedding(\Manifold), \R^{n+1})$ is the trace operator and $n$ the outer unit normal vector field on $\ShapeEmbedding(\Manifold)$.
		
		\item[(iii)] If $\TargetPrShp$ has shape functionality, then for all $\ShapeEmbedding \in \operatorname{Emb}(\Manifold, \R^{n+1})$, we have  $g^{\TangentSpaceShape} = 0$ and 
		\begin{equation}\label{Thrm_HadaShapeFunct1}
			\PrShpDeriv\TargetPrShp(\ShapeEmbedding)[V] = \ShpDeriv\TargetShp(\ProjectionCanonical(\ShapeEmbedding))[V] \qquad \forall V \in C^\infty(\R^{n+1}, \R^{n+1}),
		\end{equation}
		where $\TargetShp: B_e^n \rightarrow \R$ is the natural shape functional corresponding to $\TargetPrShp$ by $\TargetShp\circ\ProjectionCanonical = \TargetPrShp$. 
		In particular $g^{\NormalSpaceShape}$ corresponds to the distribution of the classical Hadamard-Zol\'{e}sio structure theorem.
	\end{itemize}
\end{theorem}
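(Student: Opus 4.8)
The plan is to establish the three assertions by leveraging the tangential/normal decomposition of the tangent space \cref{Defi_DecompositionTangentEmb} together with the definition of the pre-shape derivative as a bounded linear functional in $V$. Since $\PrShpDeriv\TargetPrShp(\ShapeEmbedding)$ is by assumption a continuous linear functional on $C^\infty(\R^{n+1},\R^{n+1})$, it can be viewed as a vector-valued distribution once restricted to $C_c^\infty(\R^{n+1},\R^{n+1})$; the structure theorem is then essentially a statement about \emph{where} this distribution is supported and how it splits according to the geometry of $\ShapeEmbedding(\Manifold)$.

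For assertion (i), I would argue that the perturbation of identity \cref{Defi_PertuOfIdPreShapes} reads $\ShapeEmbedding_t = \ShapeEmbedding + t\cdot V\circ\ShapeEmbedding$, so the difference quotient defining $\PrShpDeriv\TargetPrShp(\ShapeEmbedding)[V]$ depends on $V$ only through its values $V\circ\ShapeEmbedding$ on the image $\ShapeEmbedding(\Manifold)$. Hence if $\operatorname{supp} V \cap \ShapeEmbedding(\Manifold) = \varnothing$, then $V\circ\ShapeEmbedding \equiv 0$, so $\ShapeEmbedding_t = \ShapeEmbedding$ for all $t$ and the derivative vanishes. This localizes the support of the distribution to those $V$ that actually meet $\ShapeEmbedding(\Manifold)$, giving \cref{Thrm_SuppPreShDir}. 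For assertion (ii), the key is that the trace $\operatorname{Tr}_{\vert\ShapeEmbedding(\Manifold)}[V]\in C^\infty(\ShapeEmbedding(\Manifold),\R^{n+1})$ admits the orthogonal splitting from \cref{Defi_DecompositionTangentEmb} into a normal part in $\NormalSpaceShape_{\ShapeEmbedding(\Manifold)}$ and a tangential part in $\TangentSpaceShape_{\ShapeEmbedding(\Manifold)}$. Writing $\operatorname{Tr}[V] = V^{\NormalSpaceShape} + V^{\TangentSpaceShape}$ pointwise and using linearity of $\PrShpDeriv\TargetPrShp(\ShapeEmbedding)$ together with the fact that the derivative only sees the trace, I would define $g^{\NormalSpaceShape}$ and $g^{\TangentSpaceShape}$ by applying the functional to extensions of the normal and tangential components respectively, yielding \cref{Thrm_HadamardSplittingPreShDir}. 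Continuity and boundedness transfer from the hypothesis that $\PrShpDeriv\TargetPrShp(\ShapeEmbedding)$ is bounded linear; the support statements \cref{Eq_SupportNormalDistr} and \cref{Eq_SupportTangDistr} follow from restricting the definition domains to the respective subspaces of the trace decomposition.

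Assertion (iii) is where I would invoke \cref{Prop_ShapeDiffImpliesPreShapeDiff} and the shape functionality hypothesis. If $\TargetPrShp$ has shape functionality, it is constant on fibers $\ProjectionCanonical(\ShapeEmbedding)$, so it equals $\TargetShp\circ\ProjectionCanonical$ for the induced shape functional $\TargetShp$. A purely tangential direction $V$ (with $\operatorname{Tr}[V]\in\TangentSpaceShape_{\ShapeEmbedding(\Manifold)}$) generates, to first order, a reparameterization of $\ShapeEmbedding(\Manifold)$, i.e.\ movement \emph{within} the fiber, since tangential fields lie in the vertical bundle $\ker T\ProjectionCanonical$. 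Consequently $\PrShpDeriv\TargetPrShp(\ShapeEmbedding)[V]=0$ for such $V$, forcing $g^{\TangentSpaceShape}=0$. For normal directions one recovers precisely the classical Eulerian difference quotient of $\TargetShp$, establishing $\PrShpDeriv\TargetPrShp(\ShapeEmbedding)[V]=\ShpDeriv\TargetShp(\ProjectionCanonical(\ShapeEmbedding))[V]$, whence $g^{\NormalSpaceShape}$ matches the Hadamard-Zol\'{e}sio distribution.

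The main obstacle I anticipate is the rigorous identification of $\PrShpDeriv\TargetPrShp(\ShapeEmbedding)$ as a genuine (tempered) distribution supported on $\ShapeEmbedding(\Manifold)$, and in particular the well-definedness of $g^{\NormalSpaceShape}, g^{\TangentSpaceShape}$ independent of the chosen extension of a trace off $\ShapeEmbedding(\Manifold)$. Because the pre-shape derivative depends on $V$ only through $V\circ\ShapeEmbedding$, the functional factors through the trace operator, so I would carefully verify that two extensions agreeing on $\ShapeEmbedding(\Manifold)$ yield the same value before splitting into normal and tangential pieces. Making the vertical-bundle argument precise in assertion (iii)---namely that tangential directions integrate to reparameterizations and hence lie in the kernel of the derivative when shape functionality holds---is the conceptual crux and will require the curve-based argument already sketched in the proof of \cref{Prop_ShapeDiffImpliesPreShapeDiff}.
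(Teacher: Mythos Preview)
Your proposal is correct and follows essentially the same route as the paper: part (i) via $V\circ\ShapeEmbedding\equiv 0$ forcing $\ShapeEmbedding_t=\ShapeEmbedding$, part (ii) via the horizontal/vertical splitting \cref{Defi_DecompositionTangentEmb} applied to the trace, and part (iii) via the fiber-constancy argument already used in \cref{Prop_ShapeDiffImpliesPreShapeDiff}. The only point the paper makes more explicit is the tempered-distribution claim in (ii), which it dispatches by observing that $\ShapeEmbedding(\Manifold)$ is compact so the functional has compact support and hence lies in the Schwartz dual; you flag this as an obstacle but do not supply that one-line justification, and your additional care about extension-independence (factoring through the trace) is a welcome refinement the paper leaves implicit.
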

\begin{proof}
	Since we are in a different situation than the classical Hadamard-Zol\'{e}sio structure theorem \cite[Ch. 9, Thm. 3.6] {Delfour-Zolesio-2001} for (i) and $(ii)$, we give proofs for these ourselves. 
	
	For (i), let $V\in C^\infty(\R^{n+1}, \R^{n+1})$ be such that $\ShapeEmbedding(\Manifold) \cap \operatorname{supp} V = \varnothing$. 
	Consider the perturbation of identity $\ShapeEmbedding_t$ for $V$ of $\ShapeEmbedding$ as in \cref{Defi_PertuOfIdPreShapes}.
	By $\ShapeEmbedding(\Manifold) \cap \operatorname{supp} V = \varnothing$ we have $V\circ\ShapeEmbedding = 0$, resulting in $\ShapeEmbedding_t = \ShapeEmbedding$ being constant in $t$.
	This yields $\PrShpDeriv\TargetPrShp(\ShapeEmbedding)[V] = 0$ by \cref{Defi_PreShapeDeriv}, which immediately gives us (i).
	
	For (ii), the proof, in some extent, follows analogous reasoning as in \cite[Ch. 9.3.4, Cor. 1]{Delfour-Zolesio-2001}, where Banach spaces $C^k(\R^{n+1}, \R^{n+1})$ are considered. 
	It is clear that $\PrShpDeriv\TargetPrShp(\ShapeEmbedding): C^\infty(\R^{n+1}, \R^{n+1}) \rightarrow \R$ defines a linear functional with compact support as in \cref{Thrm_SuppPreShDir} (cf. \cite[Def. 6.22]{rudin1991functional} for definition of supports of distributions). 
	In addition, we can use that it is contained in the Schwartz space $C^\infty_c(\R^{n+1},\R^{n+1})$ due to compactness of $\ShapeEmbedding(\Manifold)$.
	This gives us the tempered distribution property.
	Then $g^{\TangentSpaceShape}$ and $g^{\NormalSpaceShape}$ are defined by restriction to the vertical and horizontal part of $V$, recurring on decomposition of the tangent bundle in horizontal and vertical components \cref{Defi_DecompositionTangentEmb}, giving us (ii).

	For (iii), let $\TargetPrShp$ have shape functionality and let $V\in C^\infty(\R^{n+1}, \R^{n+1})$. 
	On $\ShapeEmbedding(\Manifold)$, we can decompose $V$  into normal and tangential components. 
	For the tangential part, we can follow analogous arguments as in the proof of \cref{Prop_ShapeDiffImpliesPreShapeDiff}, giving us a curve $\ShapeEmbedding_t$ in the fiber $\ProjectionCanonical(\ShapeEmbedding)$ generating the pre-shape derivative as a differential at a given $\ShapeEmbedding$. 
	As $\ShapeEmbedding_t$ is running on the fiber of $\ShapeEmbedding$ and $\TargetPrShp$ has shape functionality (cf. \cref{Defi_ShapeFunctionalityPreShFctnl}) in $\ShapeEmbedding$ by assumption, $\TargetPrShp(\ShapeEmbedding_t)$ is constant for all $t$, rendering $g^\TangentSpaceShape=0$ by \cref{Thrm_HadamardSplittingPreShDir}. 
	Further, by \cref{Thrm_HadamardSplittingPreShDir} the pre-shape derivative $\PrShpDeriv\TargetPrShp(\ShapeEmbedding)[V]$ reduces to $g^{\NormalSpaceShape}$ acting on normal directions.
	With \cref{Prop_ShapeDiffImpliesPreShapeDiff}, shape functionality of $\TargetPrShp$ leads to a well defined shape functional $\TargetShp: B_e^n \rightarrow \R$ with $\TargetShp\circ \ProjectionCanonical = \TargetPrShp$. 
	As the tangential part of $V$ has no impact on $\PrShpDeriv\TargetPrShp(\ShapeEmbedding)[V]$, we can find a horizontal curve $\ShapeEmbedding_t$ generating $\PrShpDeriv\TargetPrShp(\ShapeEmbedding)[V]$. 
	The representative $\ShapeEmbedding_t$ either creates a trivial curve $\ProjectionCanonical(\ShapeEmbedding_t)$ in $B_e^n$, which leads to \cref{Thrm_HadaShapeFunct1} being $0$ on both sides, or a non-trivial curve $\ProjectionCanonical(\ShapeEmbedding_t)$ in $B_e^n$. 
	If $\ProjectionCanonical(\ShapeEmbedding_t)$ is non trivial, we have
	\begin{align}
		\ShpDeriv\TargetShp(\ProjectionCanonical(\ShapeEmbedding))[V] =  \frac{\diff}{\diff t}_{\vert t=0}   \TargetShp(\ProjectionCanonical(\ShapeEmbedding_t)) = \frac{\diff}{\diff t}_{\vert t=0}  \TargetPrShp(\ShapeEmbedding_t) =
		\PrShpDeriv\TargetPrShp(\ShapeEmbedding)[V]  
	\end{align}
	for the shape derivative $\ShpDeriv\TargetShp(\ProjectionCanonical(\ShapeEmbedding))$ and pre-shape derivative $\PrShpDeriv\TargetPrShp(\ShapeEmbedding)$, resulting in \cref{Thrm_HadaShapeFunct1}. 
	By association of $\ShapeEmbedding(\Manifold)$ with $\ProjectionCanonical(\ShapeEmbedding)\in B_e^n$ this also shows that $g^\NormalSpaceShape$ corresponds to the distribution in the classical Hadamard-Zol\'{e}sio structure theorem (cf. \cite[Ch. 9.3.4, Thrm. 3.6 and Cor. 1]{Delfour-Zolesio-2001}), giving us (iii).
\end{proof}

Structure \cref{Theorem_PreShapeHadamard} gives an intuitive way to understand the pre-shape derivative \cref{Defi_PreShapeDeriv} and the relation between shape functionals and pre-shape functionals.
Part (i) of \cref{Theorem_PreShapeHadamard} has the same meaning as in the classical Hadamard-Zol\'{e}sio structure theorem for shape derivatives, namely that deformations of the hold-all domain only influence the pre-shape functional if they deform the (pre-) shape $\ShapeEmbedding(\Manifold)$. 

The difference to classical shape derivatives is illustrated in equation \cref{Thrm_HadamardSplittingPreShDir}, where the effect of deformations on the objective is split into normal and tangential components. 
The normal part $g^{\NormalSpaceShape}$ can be understood as the shape optimization part of $\PrShpDeriv\TargetPrShp$, i.e. $\TargetPrShp$ depending on the change of interface $\ShapeEmbedding(\Manifold)$.
This is also reflected by the structure of its support given in \cref{Eq_SupportNormalDistr}, which states that only normal directions $V\in\NormalSpaceShape_{\ShapeEmbedding(\Manifold)}$ deforming $\ShapeEmbedding(\Manifold)$ have an effect on $g^{\NormalSpaceShape}$.
On the other hand, $g^{\TangentSpaceShape}$ is interpretable as the part of $\PrShpDeriv\TargetPrShp$ being sensitive to reparameterization of the shape $\ShapeEmbedding(\Manifold)$, which is shown by the structure of its support in \cref{Eq_SupportTangDistr}, where only tangential vector field $V\in\TangentSpaceShape_{\ShapeEmbedding(\Manifold)}$ play a role.
In classical shape optimization, tangential vectors are always in the nullspace of the shape derivative.
But in the more general pre-shape case both components discussed can have non-trivial effects.

This is also reflected by \cref{Theorem_PreShapeHadamard} (iii), stating that pre-shape functionals having shape functionality have vanishing tangential part of the pre-shape derivative $\PrShpDeriv\TargetPrShp$, meaning that they are only supported by normal components of the deformation field $V\in C^\infty(\R^{n+1}, \R^{n+1})$ just as in classical shape optimization theory.
On the other hand, if 'shape derivatives' are not found to vanish in tangential directions, the 'shape functional' at hand is actually a true pre-shape functional. 
This is the case for mesh optimization techniques we will introduce.

Also, for shapes $\ShapeEmbedding(\Manifold)\subset\R^{n+1}$ being bounded and topologically closed $C^{k+1}$-submanifolds of $\R^{n+1}$ with non-empty interior, the classical Hadamard-Zol\'{e}sio structure theorem was generalized in \cite[Thrm. 5.5]{sturm2016structure}.
In the special case of the objective $\TargetPrShp$ having shape functionality, i.e. vanishing tangential component of the pre-shape derivative as by \cref{Theorem_PreShapeHadamard}, the results of \cite[Corollary 4.2]{sturm2016structure} coincide with results in \cref{Theorem_PreShapeHadamard}.

Before we come to some exemplary pre-shape derivatives and their  decompositions, we formulate a simple corollary, which connects the classical material derivatives to their pre-shape versions.
\begin{corollary}[Decomposition for Pre-Shape Material Derivatives]\label{Cor_MaterialDeriv}
	Let $f: \operatorname{Emb}(\Manifold, \R^{n+1})\times \R^{n+1} \rightarrow \R$ be pre-shape differentiable, $\ShapeEmbedding\in\operatorname{Emb}(\Manifold, \R^{n+1})$ and $V\in C^\infty(\R^{n+1}, \R^{n+1})$.
	Then the material derivative decomposes to
	\begin{equation}\label{Eq_PreShapeMaterialDerivDecompo}
		\PrShpDeriv_m f(\ShapeEmbedding)[V] 
		=
		\big\langle g^{\NormalSpaceShape}, V \big\rangle + \big\langle g^{\TangentSpaceShape}, V \big\rangle + \nabla f_\ShapeEmbedding^T V.
	\end{equation}
	In particular, if $f$ has shape functionality, then for the corresponding shape dependent function $\tilde{f}: B_e\times \R^{n+1} \rightarrow \R$ the relationship
	\begin{equation}\label{Eq_PreShpMaterialCoincClassical}
		\PrShpDeriv_m f (\ShapeEmbedding)[V] = \ShpDeriv_m \tilde{f} (\ProjectionCanonical(\ShapeEmbedding))[V] 
	\end{equation}
	holds for all $\ShapeEmbedding \in \operatorname{Emb}(\Manifold, \R^{n+1})$, which means the pre-shape and classical material derivative coincide.
\end{corollary}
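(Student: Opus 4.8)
The plan is to combine the chain-rule decomposition \cref{Eq_MaterialDerivDecompo} of the pre-shape material derivative with the Hadamard-type splitting of the pre-shape derivative provided by \cref{Theorem_PreShapeHadamard}(ii). For a fixed evaluation point $x_0 \in \R^{n+1}$, differentiating $f_{\ShapeEmbedding_t}(x_t)$ at $t=0$ via the chain rule separates the dependence of $f$ on its first (pre-shape) argument from the transport of the base point $x_t = x_0 + tV(x_0)$. The first contribution is precisely the pre-shape derivative $\PrShpDeriv f(\ShapeEmbedding)[V]$ of the pre-shape functional $\ShapeEmbedding \mapsto f_\ShapeEmbedding(x_0)$ at fixed $x_0$, while the second is the directional transport term $\nabla f_\ShapeEmbedding^T V$, as already recorded in \cref{Eq_MaterialDerivDecompo}.

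First I would apply \cref{Theorem_PreShapeHadamard}(ii) to the term $\PrShpDeriv f(\ShapeEmbedding)[V]$, which yields the representation $\langle g^{\NormalSpaceShape}, V\rangle + \langle g^{\TangentSpaceShape}, V\rangle$ in terms of the normal and tangential distributions associated with $f$ at $\ShapeEmbedding$. Substituting this into \cref{Eq_MaterialDerivDecompo} gives the asserted decomposition \cref{Eq_PreShapeMaterialDerivDecompo} directly; this first part is essentially bookkeeping once the structure theorem is in place, since the transport term $\nabla f_\ShapeEmbedding^T V$ is carried along unchanged.

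For the second assertion I would assume $f$ has shape functionality, meaning $f_{\ShapeEmbedding\circ\Diffeomorphism} = f_\ShapeEmbedding$ for all $\Diffeomorphism \in \operatorname{Diff}(\Manifold)$, so that the spatial function $f_\ShapeEmbedding$ depends only on the fiber $\ProjectionCanonical(\ShapeEmbedding)$. This legitimizes a well-defined shape-dependent function $\tilde f: B_e \times \R^{n+1} \to \R$ with $\tilde f_{\ProjectionCanonical(\ShapeEmbedding)} = f_\ShapeEmbedding$. By \cref{Theorem_PreShapeHadamard}(iii) the tangential distribution vanishes, $g^{\TangentSpaceShape} = 0$, and the remaining normal part coincides with the classical shape derivative, $\langle g^{\NormalSpaceShape}, V\rangle = \ShpDeriv\tilde f(\ProjectionCanonical(\ShapeEmbedding))[V]$. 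Since $f_\ShapeEmbedding = \tilde f_{\ProjectionCanonical(\ShapeEmbedding)}$ as functions on $\R^{n+1}$, their spatial gradients agree, so the transport terms match as well, $\nabla f_\ShapeEmbedding^T V = \nabla \tilde f_{\ProjectionCanonical(\ShapeEmbedding)}^T V$. Invoking the analogous classical material-derivative decomposition $\ShpDeriv_m \tilde f = \ShpDeriv \tilde f + \nabla \tilde f^T V$ (cf. \cite[p. 111, (3.39)]{HaslingerMakinen}) then gives \cref{Eq_PreShpMaterialCoincClassical}.

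The main obstacle I anticipate is this last identification of the transport terms: one must be precise about what shape functionality means for the \emph{family} $\{f_\ShapeEmbedding\}$ (as opposed to a scalar pre-shape functional as in \cref{Defi_ShapeFunctionalityPreShFctnl}) and verify that it forces $f_\ShapeEmbedding$ to factor through $\ProjectionCanonical$, so that $\tilde f$ is genuinely well-defined and its spatial gradient equals $\nabla f_\ShapeEmbedding$. Everything else reduces to citing \cref{Theorem_PreShapeHadamard} together with the classical decomposition, so this is the only place requiring genuine care rather than routine substitution.
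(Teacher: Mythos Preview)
Your proposal is correct and follows essentially the same approach as the paper: apply the chain-rule decomposition \cref{Eq_MaterialDerivDecompo} at a fixed $x_0$, then split the pre-shape derivative via \cref{Theorem_PreShapeHadamard}(ii), and for the second assertion invoke \cref{Theorem_PreShapeHadamard}(iii) to kill $g^{\TangentSpaceShape}$ and identify $g^{\NormalSpaceShape}$ with the classical Hadamard distribution. Your extra care about the well-definedness of $\tilde f$ and the matching of the transport terms is more explicit than the paper's version, but the logical skeleton is identical.
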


\begin{proof}
	To get decomposition \cref{Eq_PreShapeMaterialDerivDecompo}, we simply use formula \cref{Eq_MaterialDerivDecompo} and apply decomposition \cref{Thrm_HadamardSplittingPreShDir} of the structure theorem to the occurring pre-shape derivatives for fixed $x_0 \in \R^{n+1}$.
	If $f$ has shape functionality, we can apply part (iii) of \cref{Theorem_PreShapeHadamard} to decomposition \cref{Eq_PreShapeMaterialDerivDecompo} and see that the pre-shape material derivative equals the classical material derivative $\ShpDeriv_m \tilde{f} (\ProjectionCanonical(\ShapeEmbedding))[V]$, 
	since $g^{\NormalSpaceShape}$ corresponds to the distribution from the classical Hadamard theorem.
\end{proof}

With the last part of this corollary, we can apply pre-shape material derivatives to shape differentiable functions, yielding the same results as the classical material derivative by association with pre-shape extensions via \cref{Prop_ShapeDiffImpliesPreShapeDiff}.

We have now seen that structure \cref{Theorem_PreShapeHadamard}, \cref{Prop_ShapeDiffImpliesPreShapeDiff} and  \cref{Cor_MaterialDeriv} guarantee validity of classical shape calculus formulae and results in the context of pre-shapes. 
Pre-shape calculus can be applied to objects from shape optimization if they are associated with their corresponding  pre-shape counterparts, leading to the same derivatives and thus optimization methods.
Even further, it is possible to apply pre-shape calculus to mixed shape and pre-shape problems, where the shape part is  treated just as if shape calculus was applied, with the key difference that a pre-shape component would otherwise be non-accessible.
In the following, we show some simple examples which are not accessible by classical shape calculus.

\paragraph{Example 1}\label{Ex_PreShpFirstEx}
	For a target pre-shape $\tilde{\ShapeEmbedding}\in \operatorname{Emb}(S^1, \R^2)$, let us define a pre-shape optimization problem by 
	\begin{equation}\label{Eq_ExampleToyPreShape}
		\underset{\ShapeEmbedding\in \operatorname{Emb}(S^1, \R^2)}{\min} \; \frac{1}{2}\int_{S^1} \vert \ShapeEmbedding - \tilde{\ShapeEmbedding} \vert ^2\;\diff s \; =: \TargetPrShp(\ShapeEmbedding).
	\end{equation}
	The pre-shape functional $\TargetPrShp$ measures the difference of a target $\tilde{\ShapeEmbedding}$ to another parameterized shape $\ShapeEmbedding$.
	
	Its pre-shape derivative can be calculated for directions $V\in C^\infty(\R^2,\R^2)$ by using elementary techniques
	\begin{align}\label{Eq_ExamplePreShapeGeneral}
	\begin{split}
		\PrShpDeriv\TargetPrShp(\ShapeEmbedding)[V] 
		=&
		\frac{\diff}{\diff t}_{\vert t = 0} \frac{1}{2}\int_{S^1} \vert \ShapeEmbedding_t - \tilde{\ShapeEmbedding} \vert ^2\;\diff s \\
		=&
		\frac{1}{2}\int_{S^1}\frac{\diff}{\diff t}_{\vert t = 0} \langle \ShapeEmbedding + t\cdot V\circ\ShapeEmbedding - \tilde{\ShapeEmbedding}, \ShapeEmbedding + t\cdot V\circ\ShapeEmbedding - \tilde{\ShapeEmbedding} \rangle \;\diff s \\
		=& 
		\int_{S^1} \langle \ShapeEmbedding - \tilde{\ShapeEmbedding}, V\circ\ShapeEmbedding \rangle \;\diff s.
	\end{split}
	\end{align}
	We can choose $S^1$ with canonical parameterization as a starting pre-shape or point of reference by considering
	\begin{equation}
		\ShapeEmbedding_{id}: S^1\subset \R^2 \rightarrow \R^2, 
		\begin{pmatrix}
		x_1 \\ x_2
		\end{pmatrix}
		\mapsto
		\begin{pmatrix}
		x_1 \\ x_2
		\end{pmatrix}.
	\end{equation}
	In order to formulate the Hadamard-Zol\'esio-type decomposition \cref{Thrm_HadamardSplittingPreShDir}, we need the outer normal vector field and an oriented tangential vector field, which for $S^1$ are given by
	\begin{equation}
		n: S^1 \rightarrow \R^2, 
		\begin{pmatrix}
		x_1 \\ x_2
		\end{pmatrix}
		\mapsto
		\begin{pmatrix}
		x_1 \\ x_2
		\end{pmatrix}
		, \quad 
		\TangentVector: S^1 \rightarrow \R^2,
		\begin{pmatrix}
		x_1 \\ x_2
		\end{pmatrix}
		\mapsto
		\begin{pmatrix}
		-x_2 \\ x_1
		\end{pmatrix}.
	\end{equation}
	Now we can examine the problem for several different parameterized target shapes $\tilde{\ShapeEmbedding}\in\operatorname{Emb}(S^1, \R^2)$.
	First, we can consider rescaling by a factor $\alpha\in (0,\infty)$, which lets $S^1$ contract or expand. 
	The target for this is given by
	\begin{equation}
		\tilde{\ShapeEmbedding}: S^1 \rightarrow \R^2, 		\begin{pmatrix}
		x_1 \\ x_2
		\end{pmatrix}
		\mapsto \alpha\cdot
		\begin{pmatrix}
		x_1 \\ x_2
		\end{pmatrix}.
	\end{equation}
	Using \cref{Eq_ExamplePreShapeGeneral}, the pre-shape derivative becomes
	\begin{equation}
		\PrShpDeriv\TargetPrShp(\ShapeEmbedding_{id})[V] 
		= 
		\int_{S^1} (1-\alpha) \cdot \langle n, V \rangle \;\diff s.
	\end{equation}
	This shows that rescaling of $S^1$ has vanishing parameterization part $g^{\mathcal{T}} \equiv 0$, whereas the remaining shape component $g^{\mathcal{N}}$ is in the style of the classical Hadamard-Zol\'esio representation given above.
	In particular, only vector fields $V$ acting in normal direction are supported.
	
	Next, let us consider a rotation of the circle. 
	For this, we let $\alpha\in [0,2\pi)$ and consider target rotations
	\begin{equation}
		\tilde{\ShapeEmbedding}: S^1 \rightarrow \R^2, 
		\begin{pmatrix}
		x_1 \\ x_2
		\end{pmatrix}
		\mapsto
		\Bigg(
		\begin{matrix}
		\cos(\alpha) &-\sin(\alpha) \\ \sin(\alpha) & \cos(\alpha)
		\end{matrix}
		\Bigg)
		\begin{pmatrix}
		x_1 \\ x_2
		\end{pmatrix}.
	\end{equation}
	Plugging this into \cref{Eq_ExamplePreShapeGeneral} and doing some reformulations, the according decomposition becomes
	\begin{equation}
		\PrShpDeriv\TargetPrShp(\ShapeEmbedding_{id})[V] 
		= 
		\int_{S^1} \big(1-\cos(\alpha)\big)\cdot \langle n, V\rangle \;\diff s 
		+ 
		\int_{S^1} \sin(\alpha)\cdot \langle \TangentVector, V\rangle \;\diff s.
	\end{equation}
	Here we see both components of the decomposition, the first corresponding to the normal $g^{\mathcal{N}}$, and the second corresponding to the tangential part $g^{\mathcal{T}}$.
	Notice that the normal component vanishes exactly for trivial rotations, whereas the tangential part also vanishes for the reflection at origin case $\alpha=\pi$.
	
	Finally, we can also translate $S^1$ by some fixed $z\in \R^2$, which gives a target
	\begin{equation}
		\tilde{\ShapeEmbedding}: S^1 \rightarrow \R^2, 
		\begin{pmatrix}
		x_1 \\ x_2
		\end{pmatrix}
		\mapsto
		\begin{pmatrix}
		x_1 \\ x_2
		\end{pmatrix}
		+ z.
	\end{equation}
	The decomposition of the pre-shape derivative becomes
	\begin{equation}
		\PrShpDeriv\TargetPrShp(\ShapeEmbedding_{id})[V]
		= 
		\int_{S^1} \langle n, z \rangle \cdot \langle n, V\rangle \;\diff s 
		+ 
		\int_{S^1}\langle \TangentVector, z\rangle \cdot \langle \TangentVector, V\rangle \;\diff s,
	\end{equation}
	where decomposition into $g^{\mathcal{N}}$ and $g^{\mathcal{T}}$ depend on normal and tangential components of $z$ on $S^1$.

Next, we give a summary of several useful pre-shape calculus formulae.
Due to \cref{Prop_ShapeDiffImpliesPreShapeDiff} and \cref{Cor_MaterialDeriv}, they are also true for shape derivatives and functionals.
\begin{corollary}[Pre-Shape Calculus Rules]\label{Cor_PreShapeCalc}
	Let $f,g: \operatorname{Emb}(\Manifold, \R^{n+1}) \times \R^{n+1} \rightarrow \R$ be pre-shape differentiable and differentiable in the second component, and let $h: \R \rightarrow \R$ be differentiable.
	Let $\Omega\subseteq \R^{n+1}$ be an open, bounded domain with Lipschitz boundary, $\Shape$ an $n$-dimensional $C^\infty$-submanifold of $\R^{n+1}$. 
	Consider $\ShapeEmbedding\in \operatorname{Emb}(\Manifold, \R^{n+1})$ and $V\in C^\infty(\R^{n+1}, \R^{n+1})$.
	Then the following set of rules applies for pre-shape and -material derivatives, including the special case of shape derivatives.
	\begin{spacing}{1.75}	
	\begin{enumerate}\label{Eq_MaterialShapeDer}
		\item[(i)] 
		$\PrShpDeriv_m f(\ShapeEmbedding)[V] 
		\hspace{1.175cm} = 
		\PrShpDeriv f(\ShapeEmbedding)[V] + \nabla f_\ShapeEmbedding^T V$ 
		\item[(ii)]
		$ \PrShpDeriv_m (f\cdot g)(\ShapeEmbedding)[V] 
		\hspace{0.475cm} =
		\PrShpDeriv_m f(\ShapeEmbedding)[V]\cdot g_\ShapeEmbedding + f_\ShapeEmbedding \cdot \PrShpDeriv_m g (\ShapeEmbedding)[V]$
		\item[(iii)] 
		$ \PrShpDeriv_m( h\circ f)(\ShapeEmbedding)[V] 
		\hspace{0.375cm} =
		Dh(f_\ShapeEmbedding)\PrShpDeriv_mf(\ShapeEmbedding)[V]$
		\item[(iv)] 
		$\PrShpDeriv_m(\int_{\Omega}f\;\diff x)(\ShapeEmbedding)[V] 
		\hspace{0.cm} =
		\int_{\Omega}\PrShpDeriv_m f(\ShapeEmbedding)[V] + \operatorname{div}(V)f_\ShapeEmbedding\;\diff x$
		\item[(v)]
		$	\PrShpDeriv_m(\int_{\Gamma}f\;\diff s)(\ShapeEmbedding)[V] 
		\hspace{.07cm} =
		\int_{\Gamma} \PrShpDeriv_mf(\ShapeEmbedding)[V] + \operatorname{div}_\Gamma(V)f_\ShapeEmbedding \;\diff s$
	\end{enumerate}
	\end{spacing}
	with $\operatorname{div}_\Gamma(V)$ being the tangential divergence of $V$ on $\ShapeEmbedding$, and $Dh$ being the total derivative of $h$.
\end{corollary}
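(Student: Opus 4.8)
The plan is to exploit the single observation that, by \cref{Defi_PreShapeMaterialDerivDefi}, the pre-shape material derivative $\PrShpDeriv_m f(\ShapeEmbedding)[V]$ is merely the ordinary one-sided time derivative $\frac{\diff}{\diff t}_{\vert t=0}$ of the scalar map $t \mapsto f_{\ShapeEmbedding_t}(x_t)$, taken along the spatial flow $x_t = x_0 + t\cdot V(x_0)$ of the perturbation of identity. Because $\PrShpDeriv_m$ is thus an ordinary derivative in disguise, the elementary differentiation rules transfer at once, and the genuinely geometric content sits only in the two integral transport formulae (iv) and (v).

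For (i) I would apply the chain rule to $t\mapsto f_{\ShapeEmbedding_t}(x_t)$, separating the dependence through the pre-shape argument $\ShapeEmbedding_t$ at the fixed spatial point $x_0$ (which by \cref{Defi_PreShapeDerivAndPertuOfId} is precisely $\PrShpDeriv f(\ShapeEmbedding)[V]$) from the spatial dependence through $x_t$ (which gives $\nabla f_\ShapeEmbedding^T \dot{x}_0 = \nabla f_\ShapeEmbedding^T V$, since $\dot{x}_0 = V(x_0)$); this reproduces the decomposition already recorded in \cref{Eq_MaterialDerivDecompo}. For (ii) and (iii) I would simply apply the ordinary product rule and the ordinary one-variable chain rule to the composites $t\mapsto f_{\ShapeEmbedding_t}(x_t)\,g_{\ShapeEmbedding_t}(x_t)$ and $t\mapsto h(f_{\ShapeEmbedding_t}(x_t))$, and read off the resulting terms via \cref{Defi_PreShapeMaterialDeriv}; the pre-shape dependence is merely carried along and never alters the algebra.

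The substance lies in (iv) and (v). For the volume rule (iv) I would interpret the functional materially: the domain is transported by the ambient flow $\Phi_t(x)=x+t\cdot V(x)$, so $\PrShpDeriv_m(\int_\Omega f\diff x)(\ShapeEmbedding)[V] = \frac{\diff}{\diff t}_{\vert t=0}\int_{\Phi_t(\Omega)} f_{\ShapeEmbedding_t}\diff x$. Pulling back to the fixed reference domain via $x=\Phi_t(y)$ introduces the Jacobian $\det D\Phi_t(y) = \det(I + t\,DV(y))$; differentiating under the integral at $t=0$ produces the material derivative from the integrand together with $f_\ShapeEmbedding\cdot\frac{\diff}{\diff t}_{\vert t=0}\det(I+t\,DV)$, and Jacobi's formula $\frac{\diff}{\diff t}_{\vert t=0}\det(I+t\,DV)=\operatorname{tr}(DV)=\operatorname{div}(V)$ yields (iv). For the surface rule (v) the same pull-back is applied on $\Shape$, but the Euclidean Jacobian is replaced by the tangential Jacobian $J_\Shape(t)$ of $\Phi_t$ restricted to $\Shape$; the required ingredient is the standard identity $\frac{\diff}{\diff t}_{\vert t=0} J_\Shape(t) = \operatorname{div}_\Shape(V)$, after which the computation closes exactly as in the volume case.

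I expect the main obstacle to be the rigorous justification of (iv) and (v): verifying that differentiation under the integral sign is permissible, which needs a dominated-convergence argument supported by the stipulated regularity of $f$ and the Lipschitz boundary of $\Omega$, and establishing the derivative of the tangential Jacobian, the one step requiring real differential-geometric bookkeeping (writing $J_\Shape(t)=\det(D\Phi_t)\,\|D\Phi_t^{-T}n\|$ and expanding to first order in $t$). Since the perturbation flow $\Phi_t$ is identical to the one in classical shape calculus and the pre-shape dependence enters only through the consistently differentiated factor $f_{\ShapeEmbedding_t}$, these derivations are formally the same as the classical ones in \cite[p. 111]{HaslingerMakinen}, so by \cref{Cor_MaterialDeriv} the formulae specialize simultaneously to the classical material- and shape-derivative rules.
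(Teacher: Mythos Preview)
Your proposal is correct and follows essentially the same route as the paper: items (i)--(iii) are treated as immediate consequences of the fact that $\PrShpDeriv_m$ is an ordinary $t$-derivative along the flow, while (iv) and (v) are obtained by the classical pull-back/Jacobian argument, which is precisely the alternative the paper indicates (change of variables for the volume case, and the tangential-Jacobian transport formula from \cite{Delfour-Zolesio-2001} for the surface case). The only difference is presentational: the paper largely defers the geometric computations in (iv) and (v) to external references, whereas you sketch them explicitly.
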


\begin{proof}
	Let the assumptions stated above hold.
	Identity (i) was already discussed in \cref{Cor_MaterialDeriv}.
	
	The product- and chain-rule (ii) and (iii) are simple consequences of the definition of the material derivative \cref{Defi_PreShapeMaterialDeriv}.
	
	For (iv), the conditions for \cite[Thrm. 5.2.2]{hernot2018shape} apply by considering $f_{\ShapeEmbedding_t}(x_t)$ a function of $t\geq0$.
	Alternatively, since we assumed Lipschitz boundary for $\Omega$, the change of variable formula is applicable and the standard proof found in \cite[p. 112, Lemma 3.3]{HaslingerMakinen} can be used as well.
	
	The situation for (v) is more involved. 
	For this, we refer the reader to \cite[Thrm. 5.4.17]{hernot2018shape} or \cite[Ch.9.4, Thrm. 4.3]{Delfour-Zolesio-2001}.
\end{proof}

\begin{remark}[Weakening Assumptions for Pre-Shape Calculus]
	The formulae provided in \cref{Cor_PreShapeCalc} hold in far greater generality.
	
	In particular, the chain rule (iii) can be stated for Fr\'echet differentiable operators $h$ on Banach spaces of continuous functions with help of \cite[Ch.9, Thrm. 2.5]{Delfour-Zolesio-2001}.
	
	Also, formula (iv) for volume integrals can be stated for domains $\Omega$ which are merely measurable, and pre-shape differentiable families of class  $\{f_\ShapeEmbedding \in W^{1,1}(\R^{n+1},\R)\}_{\ShapeEmbedding\in\operatorname{Emb}(\Manifold, \R^{n+1})}$ with use of \cite[Thrm. 5.2.2]{hernot2018shape}.
	
	Finally, formula (v) for boundary integrals can be generalized to compact hypersurfaces $\Shape\subset\R^{n+1}$ of $C^1$-regularity and pre-shape differentiable families of class $\{f_\ShapeEmbedding \in W^{1,1}(\R^{n+1},\R)\}_{\ShapeEmbedding\in\operatorname{Emb}(\Manifold, \R^{n+1})}$ by use of \cite[Thrm. 5.4.17]{hernot2018shape}.
\end{remark}


\paragraph{Example 2: Pre-Shape Parameterization Tracking Problem}\label{Section_ShapeParaTrackingProblem}
With this problem class we introduce a non-trivial example for pre-shape optimization problems.
Its pre-shape derivative in fact is no classical shape derivative, and thus not tractable by shape calculus.
Later on, this problem class will enable us to optimize the overall mesh quality of discretizations and representations of shapes similar to deformations methods going back to \cite{liao1992new}. 
To be specific, the fiber bundle structure of $\operatorname{Emb}(\Manifold,\R^{n+1})$ offers the possibility to modify a given shape optimization problem in such a way, that the original solution is maintained, while at the same time optimization for the parameterization can take place. 
This gives rise to several different mesh regularization algorithms, and justifies the numerical optimization procedures we will establish in future works. 

Before we further elaborate on this, we introduce necessary vocabulary and notation to formulate our problem in  $\operatorname{Emb}(\Manifold, \R^{n+1})$ for submanifolds $\Manifold\subset \R^{n+1}$. 
First, we need the concept of \emph{local frames}, which are local orthonormal bases of tangential vectors on $\Manifold$ (cf. \cite[Ch. 8]{lee2013smooth}).
For an open subset $U\subseteq\Manifold$, a smooth local frame is a tuple of $\dim(M)$ tangential vector fields $\TangentVector := (\TangentVector_1,\dots,\TangentVector_n)$, such that for each $p\in U$ the tangential vectors $\TangentVector_i(p)\in T_p\Manifold$ are linearly independent.
If we have a Riemannian metric on $\Manifold$, then we can additionally demand $\TangentVector(p)=(\TangentVector_1(p),\dots,\TangentVector_n(p))$ to be orthonormal with respect to the Riemannian metric for all $p\in U$, thus calling the frame $(\TangentVector_1,\dots,\TangentVector_n)$ \emph{local orthonormal frame}.
Note that local orthonormal frames always exist, due to simple use of the Gram-Schmidt algorithm in tangential spaces (cf. \cite[Lemma 8.13]{lee2013smooth}).

To achieve a natural and numerically tractable formulation of a pre-shape parameterization tracking problem, we also need to introduce the covariant derivative of an embedding $\ShapeEmbedding\in \operatorname{Emb}(\Manifold, \R^{n+1})$. 
For this, we use similar ideas as in  \cite[Ch. 9.5.6]{Delfour-Zolesio-2001} and modify them to our situation using local orthonormal frames.
Given a $\ShapeEmbedding \in \operatorname{Emb}(\Manifold, \R^{n+1})$, let $\TangentVector: U \rightarrow (T\Manifold)^n$ be a smooth local orthonormal frame on $U\subseteq\Manifold$ and let $\TangentVector^\ShapeEmbedding: V \rightarrow (T\ShapeEmbedding(\Manifold))^n$ be a local orthonormal frame on $V\subseteq \ShapeEmbedding(\Manifold)$.
Without loss of generality, we can assume $V = \ShapeEmbedding(U)$, since we can choose $V = V\cap\ShapeEmbedding(U)$.
Then we define the local \emph{covariant derivative} representation for $\ShapeEmbedding$ under choice of frames $\TangentVector$ and $\TangentVector^{\ShapeEmbedding}$ by
\begin{equation}\label{Defi_CovariateDeriv}
D^\tau \ShapeEmbedding_{\vert U}(p) := 
\left(\begin{matrix}
\langle D\ShapeEmbedding \TangentVector_{1,p}, \TangentVector_{1,\ShapeEmbedding(p)}^{\ShapeEmbedding} \rangle & \dots & \langle D\ShapeEmbedding \TangentVector_{n,p}, \TangentVector_{1,\ShapeEmbedding(p)}^{\ShapeEmbedding} \rangle \\
\vdots & \ddots & \vdots \\
\langle D\ShapeEmbedding \TangentVector_{1,p}, \TangentVector_{n,\ShapeEmbedding(p)}^{\ShapeEmbedding} \rangle & \dots & \langle D\ShapeEmbedding \TangentVector_{n,p}, \TangentVector_{n,\ShapeEmbedding(p)}^{\ShapeEmbedding} \rangle
\end{matrix}\right),
\end{equation}
where $D\ShapeEmbedding$ is the Jacobian matrix of $\ShapeEmbedding$ and $\langle .,. \rangle$ the Euclidean scalar product of $\R^{n+1}$.
We want to make clear that the covariant derivative $D^\tau \ShapeEmbedding$ should not be mistaken for the tangential derivative $D_\Gamma\ShapeEmbedding$, which is given by (cf. \cite[Ch. 9.5.2]{Delfour-Zolesio-2001})
\begin{equation}
	D_\Gamma\ShapeEmbedding = D\ShapeEmbedding - D\ShapeEmbedding n n^T.
\end{equation}

Having numerical implementations in mind, we are also interested in the case of shapes with non-trivial boundaries.
However, when the boundary is non-trivial, we impose restrictions on the pre-shapes permitted.
Specifically, embeddings leaving the boundary invariant are sufficient, i.e. 
\begin{equation}\label{Eq_PreShapeSpaceNonTrivBoundary}
\operatorname{Emb}_{\partial\Manifold}(\Manifold, \R^{n+1}):= \{\ShapeEmbedding\in \operatorname{Emb}(\Manifold, \R^{n+1}): \ShapeEmbedding(p) = p \quad \forall p\in \partial\Manifold \}.
\end{equation}
For numerical routines this means that a specified boundary $\partial\Manifold$ of the starting shape is left fixed, whereas the interior of the shape is able to deform and change its shape and parameterization.
In the case of empty boundary, the pre-shape space becomes $\operatorname{Emb}(\Manifold, \R^{n+1})$, meaning shapes are allowed to move freely.

With the introduction of covariant derivatives and appropriate pre-shapes for the boundary case, we can formulate a \emph{pre-shape parameterization tracking problem} inspired by a least-squares formulation of the deformation method for mesh element volume optimization as found in \cite{cai2004adaptive, cao2002moving, grajewski2009mathematical}.
We remind the reader, that mesh deformation methods track for a specified target cell volume $f$ by changing coordinates of nodes.
For our formulation, we take a slight twist by using inverse Jacobians, which changes interpretation of optimal $\ShapeEmbedding$ and targets $f_\ShapeEmbedding$.
In our case, $f_\ShapeEmbedding$ describes the desired local density of mesh nodes, whereas the authors of \cite{cai2004adaptive, cao2002moving, grajewski2009mathematical} use targets $f$ describing the local cell volume.
For this reason the mentioned authors need to use reciprocals of $f$, instead of reciprocals of Jacobians.
Still, both formulations are equivalent by inverting the solutions, Jacobians and targets.
In addition to targets $f_\ShapeEmbedding$, we also incorporate a positive function $g^\Manifold: \Manifold \rightarrow (0,\infty)$, which will act as the distribution of nodes for the initial mesh.


The following proposition gives the definition, well-definedness and existence of solutions of the pre-shape parameterization tracking problem.
\begin{proposition}[Pre-Shape Parameterization Tracking Problem and Existence of Solutions]\label{Theorem_ExistenceGeneralProblem}
	Let $\Manifold$ be an $n$-dimensional, orientable,  path-connected and compact $C^\infty$-submanifold of $\R^{n+1}$, possibly with non-empty boundary $\partial\Manifold$ of $C^\infty$-regularity. 
	Additionally, let $g^\Manifold: \Manifold \rightarrow (0,\infty)$ and $f_\ShapeEmbedding: \ShapeEmbedding(\Manifold) \rightarrow (0,\infty)$ be $C^\infty$-functions, with $f$ having shape functionality. 
	Further assume the normalization condition
	\begin{equation}\label{Assumption_Theorem1fNormed}
	\int_{\ShapeEmbedding(\Manifold)} f_\ShapeEmbedding(s) \; \diff s = \int_\Manifold g^\Manifold(s)\; \diff s  \quad \forall \ShapeEmbedding\in \operatorname{Emb}_{\partial\Manifold}(\Manifold, \R^{n+1}).
	\end{equation}
	
	Then the following problem
	\begin{equation}\label{Eq_FinalGeneralParamTrackingProblem}
		\underset{\ShapeEmbedding \in \operatorname{Emb}_{\partial\Manifold}(\Manifold, \R^{n+1})}{\min} \frac{1}{2}\int_{\ShapeEmbedding(\Manifold)}
		\Big(
		g^\Manifold \circ\ShapeEmbedding^{-1}(s) \cdot \frac{1}{\det D^\tau \ShapeEmbedding \circ\ShapeEmbedding^{-1}(s)}
		-
		f_{\ShapeEmbedding}(s) 
		\Big)^2 \; \diff s
	\end{equation}
	is called \emph{pre-shape parameterization tracking problem}.
	It is well-defined and independent of choice of local orthornormal frames $\TangentVector$, $\TangentVector_\ShapeEmbedding$ on $\Manifold$ and $\ShapeEmbedding(\Manifold)$. 
	Further, in each fiber $\ProjectionCanonical(\ShapeEmbedding)$ there exists a global $C^\infty$-solution to problem  \cref{Eq_FinalGeneralParamTrackingProblem}, i.e. an embedding $\tilde{\ShapeEmbedding}$ satisfying
	\begin{equation}\label{Eq_EquiCharSol}
	 (g^\Manifold\circ\tilde{\ShapeEmbedding}^{-1})\cdot\det D^\tau \tilde{\ShapeEmbedding}^{-1} \equiv f_\ShapeEmbedding \quad \text{ and } \quad
	 \tilde{\ShapeEmbedding}(p) = p \quad \forall p\in \partial\tilde{\ShapeEmbedding}(\Manifold).
	\end{equation}
\end{proposition}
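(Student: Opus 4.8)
The plan is to first dispose of well-definedness and frame independence, then to recognize that the squared integrand turns the problem into a prescribed-Jacobian problem, and finally to invoke a Moser/Dacorogna--Moser type result to produce the minimizer inside the prescribed fiber.

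\emph{Well-definedness and frame independence.} First I would observe that the matrix $D^\tau\ShapeEmbedding$ of \cref{Defi_CovariateDeriv} is the representation of $D\ShapeEmbedding$ restricted to the tangent space $T_p\Manifold$ with respect to the orthonormal frames $\TangentVector$ and $\TangentVector^\ShapeEmbedding$. Under a change of local orthonormal frames the two frames transform by matrix-valued maps into $O(n)$, so $D^\tau\ShapeEmbedding$ is multiplied on the left and on the right by orthogonal matrices; hence $\det D^\tau\ShapeEmbedding$ changes only by a factor in $\{\pm1\}$. Choosing orientation-compatible frames on the oriented manifolds $\Manifold$ and $\ShapeEmbedding(\Manifold)$ fixes this factor to $+1$, so $\det D^\tau\ShapeEmbedding$ is the positive Riemannian volume-distortion factor of $\ShapeEmbedding$ and is frame-independent. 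In particular $\det D^\tau\ShapeEmbedding>0$, the integrand of \cref{Eq_FinalGeneralParamTrackingProblem} is a well-defined smooth function, and integrability follows from compactness of $\ShapeEmbedding(\Manifold)$.

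\emph{Reduction to a prescribed volume form.} The objective is a nonnegative integral of a square, hence bounded below by $0$, and it attains the value $0$ exactly when the integrand vanishes identically, which is precisely the first identity in \cref{Eq_EquiCharSol}. Using the substitution $s=\tilde\ShapeEmbedding(p)$ together with $\det D^\tau\tilde\ShapeEmbedding^{-1}\circ\tilde\ShapeEmbedding = 1/\det D^\tau\tilde\ShapeEmbedding$ (the inverse-function rule for the covariant determinant) I would rewrite \cref{Eq_EquiCharSol} as the pullback identity $\tilde\ShapeEmbedding^*\big(f_\ShapeEmbedding\operatorname{vol}_{\ShapeEmbedding(\Manifold)}\big)=g^\Manifold\operatorname{vol}_\Manifold$, i.e. $\tilde\ShapeEmbedding$ is required to transport the prescribed density $f_\ShapeEmbedding\operatorname{vol}_{\ShapeEmbedding(\Manifold)}$ on the shape onto $g^\Manifold\operatorname{vol}_\Manifold$ on $\Manifold$. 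Because $f$ has shape functionality, the form $f_\ShapeEmbedding\operatorname{vol}_{\ShapeEmbedding(\Manifold)}$ depends only on the image $\ShapeEmbedding(\Manifold)$, so this target is fixed along the fiber $\ProjectionCanonical(\ShapeEmbedding)$.

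\emph{Existence via Moser's theorem.} Searching inside the fiber means writing $\tilde\ShapeEmbedding=\ShapeEmbedding\circ\Diffeomorphism$ with $\Diffeomorphism\in\operatorname{Diff}(\Manifold)$ fixing $\partial\Manifold$ pointwise; then the pullback identity becomes $\Diffeomorphism^*\FormsDiffform=\RieszMeasure$, where $\FormsDiffform:=\ShapeEmbedding^*\big(f_\ShapeEmbedding\operatorname{vol}_{\ShapeEmbedding(\Manifold)}\big)$ and $\RieszMeasure:=g^\Manifold\operatorname{vol}_\Manifold$ are smooth positive volume forms on $\Manifold$. The only compatibility condition for the Moser/Dacorogna--Moser construction \cite{moser1965volume,dacorogna1990partial} of such a boundary-fixing, orientation-preserving $C^\infty$-diffeomorphism is equality of total masses, and here $\int_\Manifold\FormsDiffform=\int_{\ShapeEmbedding(\Manifold)}f_\ShapeEmbedding\,\diff s=\int_\Manifold g^\Manifold\,\diff s=\int_\Manifold\RieszMeasure$, which is exactly the normalization \cref{Assumption_Theorem1fNormed}. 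Applying the theorem yields $\Diffeomorphism$, and $\tilde\ShapeEmbedding=\ShapeEmbedding\circ\Diffeomorphism$ then lies in $\ProjectionCanonical(\ShapeEmbedding)$, fixes the boundary (since $\ShapeEmbedding\in\operatorname{Emb}_{\partial\Manifold}(\Manifold,\R^{n+1})$ and $\Diffeomorphism|_{\partial\Manifold}=\mathrm{id}$), solves \cref{Eq_EquiCharSol}, and therefore attains the global minimum $0$ of \cref{Eq_FinalGeneralParamTrackingProblem}.

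\emph{Main obstacle.} I expect the principal difficulties to be the two geometric identities underpinning the reduction, namely that $\det D^\tau\ShapeEmbedding$ from \cref{Defi_CovariateDeriv} genuinely equals the Riemannian Jacobian so that it intertwines with pullbacks of volume forms, and the inverse/chain rule for this covariant determinant, together with invoking the correct boundary-preserving, $C^\infty$-version of Moser's theorem on a manifold with boundary. Once these are in place, the normalization condition \cref{Assumption_Theorem1fNormed} slots in as the unique mass-matching hypothesis and existence follows immediately.
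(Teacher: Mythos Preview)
Your proposal is correct and follows essentially the same approach as the paper: well-definedness via invertibility of $D^\tau\ShapeEmbedding$ and frame independence via orthogonal change-of-basis, reduction of optimality to the prescribed-Jacobian condition \cref{Eq_EquiCharSol} by nonnegativity of the squared integrand, and existence via the Moser/Dacorogna--Moser theorem under the mass-matching hypothesis \cref{Assumption_Theorem1fNormed}. Your write-up is in fact more explicit than the paper's---particularly the reformulation as a pullback of volume forms and the fiber parameterization $\tilde\ShapeEmbedding=\ShapeEmbedding\circ\Diffeomorphism$---but the underlying strategy is identical.
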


\begin{proof}
	The main ingredient of this proof is a theorem by Moser \cite{moser1965volume} from 1965, extended by Dacorogna and Moser in \cite[Thrm. 1]{dacorogna1990partial}, which guarantees existence solutions.
	Due to the quadratic nature of the objective functional it is obvious that \cref{Eq_EquiCharSol} is a sufficient condition for optimality.
	Together with normalization condition \cref{Assumption_Theorem1fNormed}, Moser's and Dacorogna's theorem guarantees existence of embeddings satisfying \cref{Eq_EquiCharSol}, which is a polynomial PDE by application of Laplace's formula for determinants.
	
	Fix an orientation for $\Manifold$, $\ShapeEmbedding \in \operatorname{Emb}_{\partial\Manifold}(\Manifold, \R^{n+1})$ and let $\TangentVector$, $\TangentVector^\ShapeEmbedding$ be local orthonormal frames for each $\ShapeEmbedding \in \operatorname{Emb}_{\partial\Manifold}(\Manifold, \R^{n+1})$.
	Well-definedness of the integrand in problem  \cref{Eq_FinalGeneralParamTrackingProblem} is clear, since $g^\Manifold$ is positive and $\ShapeEmbedding$ is an immersion, making $D^\tau \ShapeEmbedding\in GL(n,\R)$ and thus $\det D^\tau \ShapeEmbedding$ non-vanishing.
	In the case of non-empty boundary, the integral is well-defined by using the interior of $\ShapeEmbedding(\Manifold)$, as the boundary $\partial\Manifold$ is a set of measure zero.
	Independence of choice of orientation preserving local orthonormal bases inducing the covariant derivative (see \cref{Defi_CovariateDeriv}) is also clear, since an orientation preserving change of the orthonormal base can be realized by multiplications with orthogonal matrices $\tilde{B},B \in SO(n)$, and hence by the determinant product rule $\det D^\tau \ShapeEmbedding$ remains invariant.
	Further, if no global orthonormal frame exists,  well-definedness and independence of choice of local orthonormal frames is guaranteed as well.
	This can be ensured by using a partition of unity, which covers $\ShapeEmbedding(\Manifold)$ with open domains of local orthonormal frames, and linearity of the integral in \cref{Eq_FinalGeneralParamTrackingProblem} together with the previous argument about the change of orthonormal bases.
\end{proof}

\begin{remark}[H\"older Regularity Case]
	Existence and well-definedness results from \cref{Theorem_ExistenceGeneralProblem} also hold in the more general context of $C^{k,\alpha}$-H\"older regularity.
	For given $k\in\mathbb{N}$ and $\alpha\in(0,1)$, if $\Manifold$, $f_\ShapeEmbedding$ and $g^\Manifold$ have $C^{k,\alpha}$-regularity, and $\partial\Manifold$ has $C^{k+3, \alpha}$ regularity, then solutions $\ShapeEmbedding$ with $C^{k+1,\alpha}$-regularity exist in each fiber.
	This stems from the regularity results in \cite{dacorogna1990partial}.
\end{remark}

Having guaranteed existence of solutions, we want to turn our attention to the pre-shape derivative of the general parameterization tracking problem \cref{Eq_FinalGeneralParamTrackingProblem}.
This is serves several different purposes in our studies.
For one, it is of numerical interest, since we will construct several algorithms for improvement of mesh quality in shape optimization routines based on derivatives.
At the same time, \cref{Eq_FinalGeneralParamTrackingProblem} serves as a non-trivial example to illustrate the application of pre-shape calculus techniques developed in \cref{Section_GeneralTheory}.
In particular, we will see that the derivative to the general parameterization tracking problem \cref{Eq_FinalGeneralParamTrackingProblem} is not accessible via classical shape calculus techniques.
In the following, we will leave the target density functions $f_\ShapeEmbedding$ general and only assume enough regularity for existence of the pre-shape derivative.
Later in this article, we will propose an explicit way to construct $f_\ShapeEmbedding$ (cf. monitor function study \cite{cao1999study}), while also ensuring existence of its material derivatives with a closed
form (cf. \cref{Eq_MaterialDerivativefExtForce}).

As we permit non-empty boundaries left to be invariant in the parameterization tracking problem, the space of possible test functions $V$ is altered in an according way.
In particular, due to invariance of $\partial\Manifold$,  vector fields for $\operatorname{Emb}_{\partial\Manifold}(\Manifold, \R^{n+1})$ are given by vector fields vanishing on the boundary (cf. \cite[Thrm. 8.2]{ebin1970groups}, \cite[Thrm. 3.19]{smolentsev2007diffeomorphism}), i.e. 
\begin{equation}
	C^\infty_{\partial\Manifold}(\R^{n+1},\R^{n+1}) := \{V\in C^\infty(\R^{n+1},\R^{n+1}): \operatorname{Tr}_{\vert\partial\Manifold}(V)=0 \}.
\end{equation}
Of course, the same is true for H\"older and Sobolev regularities.
With this in mind, we can derive the pre-shape derivative of the parameterization tracking problem.

\begin{theorem}[Pre-Shape Derivative of the Pre-Shape Parameterization Tracking Problem]\label{Theorem_ShapeDeriv}
	Let the assumptions of \cref{Theorem_ExistenceGeneralProblem} hold and denote by $\TargetPrShp^\tau$ the objective functional of the general parameterization tracking problem \cref{Eq_FinalGeneralParamTrackingProblem}.
	Also, assume enough regularity for $f_\ShapeEmbedding$, such that material derivatives exist.
	
	Then, for fixed $\ShapeEmbedding\in \operatorname{Emb}_{\partial\Manifold}(\Manifold, \R^{n+1})$ and $V\in C^\infty_{\partial \ShapeEmbedding(\Manifold)}(\R^{n+1},\R^{n+1})$, the pre-shape derivative of \cref{Eq_FinalGeneralParamTrackingProblem} is given by 
	\begin{align}\label{Eq_ShapeDerivTang}
	\begin{split}
	\PrShpDeriv\TargetPrShp^\tau(\ShapeEmbedding)[V] 
	=&-\int_{\ShapeEmbedding(\Manifold)} \frac{1}{2}\cdot\Big(\big(g^\Manifold\circ\ShapeEmbedding^{-1}\cdot \frac{1}{\det D^\tau \ShapeEmbedding}\circ\ShapeEmbedding^{-1}\big)^2 - f_{\ShapeEmbedding}^2\Big)
	\cdot\operatorname{div}_\Shape (V) 
	\\
	&\qquad\qquad\quad+
	\Big(g^\Manifold\circ\ShapeEmbedding^{-1}\cdot \frac{1}{\det D^\tau \ShapeEmbedding}\circ\ShapeEmbedding^{-1} - f_\ShapeEmbedding\Big)
	\cdot
	\PrShpDeriv_m(f_\ShapeEmbedding)[V] \;\diff s,
	\end{split}
	\end{align}
	with $\PrShpDeriv_m(f_\ShapeEmbedding)$ being the pre-shape material derivative of $f_\ShapeEmbedding$ and  $\operatorname{div}_{\Gamma}$ the tangential divergence on $\ShapeEmbedding(\Manifold)$.
	The pre-shape derivative does not depend on the choice of oriented local orthonormal frames $\TangentVector, \TangentVector^\ShapeEmbedding$ for representing the covariant derivative $D^\tau$.
\end{theorem}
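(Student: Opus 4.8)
The plan is to reduce the computation to the surface-integral rule \cref{Cor_PreShapeCalc}(v), which has just been established, and to isolate a single differential identity for the tangential Jacobian. To organise the bookkeeping, abbreviate the mesh density and the tracking residual by
\[ \varrho_\ShapeEmbedding := g^\Manifold\circ\ShapeEmbedding^{-1}\cdot\frac{1}{\det D^\tau \ShapeEmbedding}\circ\ShapeEmbedding^{-1}, \qquad F_\ShapeEmbedding := \varrho_\ShapeEmbedding - f_\ShapeEmbedding, \]
so that $\TargetPrShp^\tau(\ShapeEmbedding) = \frac{1}{2}\int_{\ShapeEmbedding(\Manifold)} F_\ShapeEmbedding^2\,\diff s$. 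The regularity assumed for $f_\ShapeEmbedding$, together with smoothness of $\ShapeEmbedding$ (hence $D^\tau\ShapeEmbedding\in GL(n,\R)$, so $\det D^\tau\ShapeEmbedding$ is nonvanishing), guarantees that all material derivatives below exist. Since $\TargetPrShp^\tau$ is scalar-valued, its material and pre-shape derivatives coincide (the $\nabla(\cdot)^T V$ term of \cref{Cor_PreShapeCalc}(i) is absent), so applying \cref{Cor_PreShapeCalc}(v) with integrand $\frac{1}{2}F_\ShapeEmbedding^2$ gives
\[ \PrShpDeriv\TargetPrShp^\tau(\ShapeEmbedding)[V] = \int_{\ShapeEmbedding(\Manifold)} \PrShpDeriv_m\Big(\tfrac{1}{2}F_\ShapeEmbedding^2\Big)[V] + \tfrac{1}{2}F_\ShapeEmbedding^2\,\operatorname{div}_\Shape(V)\,\diff s. \]
The constraint $\operatorname{Tr}_{\vert\partial\ShapeEmbedding(\Manifold)}(V)=0$ keeps the perturbed $\ShapeEmbedding_t$ inside $\operatorname{Emb}_{\partial\Manifold}(\Manifold,\R^{n+1})$ and suppresses boundary contributions.

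Next I would differentiate the integrand. By the chain and product rules \cref{Cor_PreShapeCalc}(iii),(ii) and linearity,
\[ \PrShpDeriv_m\Big(\tfrac{1}{2}F_\ShapeEmbedding^2\Big)[V] = F_\ShapeEmbedding\cdot\big(\PrShpDeriv_m\varrho_\ShapeEmbedding[V] - \PrShpDeriv_m f_\ShapeEmbedding[V]\big). \]
Everything now rests on the single identity $\PrShpDeriv_m\varrho_\ShapeEmbedding[V] = -\varrho_\ShapeEmbedding\,\operatorname{div}_\Shape(V)$. To see where it comes from, observe that along a material trajectory $x_t=\ShapeEmbedding_t(p)$ one has $\varrho_{\ShapeEmbedding_t}(x_t) = g^\Manifold(p)/\det D^\tau \ShapeEmbedding_t(p)$, so the factor $g^\Manifold(p)$ is frozen and only the reciprocal Jacobian is differentiated. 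Thus the identity is equivalent to the tangential Jacobi--Liouville formula
\[ \frac{\diff}{\diff t}_{\vert t=0}\det D^\tau \ShapeEmbedding_t = \det D^\tau \ShapeEmbedding\cdot(\operatorname{div}_\Shape V)\circ\ShapeEmbedding, \]
after which $\frac{\diff}{\diff t}_{\vert t=0}(1/\det D^\tau \ShapeEmbedding_t) = -(\operatorname{div}_\Shape V)\circ\ShapeEmbedding/\det D^\tau \ShapeEmbedding$ produces the claimed factor $-\varrho_\ShapeEmbedding$.

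The main obstacle is exactly this Jacobi--Liouville identity, made delicate by the fact that $\det D^\tau \ShapeEmbedding_t$ is defined through local orthonormal frames $\TangentVector^{\ShapeEmbedding_t}$ living on the moving hypersurface $\ShapeEmbedding_t(\Manifold)$, so a naive differentiation would also have to track the $t$-dependence of the frame. I would avoid this by using that, by \cref{Theorem_ExistenceGeneralProblem}, $\det D^\tau \ShapeEmbedding_t$ is frame-independent and equals the surface Jacobian appearing in the change of variables $\int_{\ShapeEmbedding_t(\Manifold)}h\,\diff s = \int_\Manifold (h\circ\ShapeEmbedding_t)\,\det D^\tau \ShapeEmbedding_t\,\diff p$. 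Writing $\ShapeEmbedding_t = (\operatorname{id}+tV)\circ\ShapeEmbedding$ and using multiplicativity of the surface Jacobian under composition, the derivative reduces to that of the surface Jacobian of $\operatorname{id}+tV$ along $\ShapeEmbedding(\Manifold)$, which is the standard tangential-divergence formula evaluating to $\operatorname{div}_\Shape V$ at $t=0$. Alternatively one applies Jacobi's formula $\frac{\diff}{\diff t}\det A_t = \det A_t\,\operatorname{tr}(A_t^{-1}\dot A_t)$ directly to $A_t=D^\tau \ShapeEmbedding_t$ in a frame held fixed in $t$ and verifies that the trace equals $(\operatorname{div}_\Shape V)\circ\ShapeEmbedding$.

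Finally I would assemble the terms. Substituting the identity gives $\PrShpDeriv_m(\frac{1}{2}F_\ShapeEmbedding^2)[V] = -F_\ShapeEmbedding\varrho_\ShapeEmbedding\operatorname{div}_\Shape(V) - F_\ShapeEmbedding\PrShpDeriv_m f_\ShapeEmbedding[V]$, and adding the leftover $\frac{1}{2}F_\ShapeEmbedding^2\operatorname{div}_\Shape(V)$ from rule (v), the divergence coefficient collapses via
\[ \tfrac{1}{2}F_\ShapeEmbedding^2 - F_\ShapeEmbedding\varrho_\ShapeEmbedding = (\varrho_\ShapeEmbedding-f_\ShapeEmbedding)\Big(\tfrac{1}{2}(\varrho_\ShapeEmbedding-f_\ShapeEmbedding)-\varrho_\ShapeEmbedding\Big) = -\tfrac{1}{2}\big(\varrho_\ShapeEmbedding^2 - f_\ShapeEmbedding^2\big), \]
which is precisely the first term of \cref{Eq_ShapeDerivTang}, while the remaining contribution $-F_\ShapeEmbedding\PrShpDeriv_m f_\ShapeEmbedding[V] = -(\varrho_\ShapeEmbedding - f_\ShapeEmbedding)\PrShpDeriv_m f_\ShapeEmbedding[V]$ matches the second, so that the whole expression equals $-\int_{\ShapeEmbedding(\Manifold)}$ of the claimed integrand. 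Frame-independence of the result is inherited directly: $\det D^\tau \ShapeEmbedding$ is frame-independent by \cref{Theorem_ExistenceGeneralProblem}, and $\operatorname{div}_\Shape V$, $f_\ShapeEmbedding$ and $\PrShpDeriv_m f_\ShapeEmbedding$ are intrinsic, so no separate argument is required.
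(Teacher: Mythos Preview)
Your proposal is correct and follows essentially the same route as the paper: apply rule (v) of \cref{Cor_PreShapeCalc}, expand the material derivative of the integrand via the chain/product rules, establish the tangential Jacobi--Liouville identity $\frac{\diff}{\diff t}_{\vert t=0}\det D^\tau\ShapeEmbedding_t = \det D^\tau\ShapeEmbedding\cdot(\operatorname{div}_\Shape V)\circ\ShapeEmbedding$, and then collapse the divergence coefficient algebraically. The paper proves the Jacobi--Liouville identity via Jacobi's formula, Cramer's rule, and trace cyclicity (your stated alternative), and handles the $g^\Manifold\circ\ShapeEmbedding^{-1}$ factor by an explicit identity $\PrShpDeriv_m(h_\ShapeEmbedding\circ\ShapeEmbedding^{-1})[V]=\PrShpDeriv(h_\ShapeEmbedding)[V]\circ\ShapeEmbedding^{-1}$ rather than by your ``frozen along material trajectories'' observation, but these are the same computation phrased differently.
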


\begin{proof}
	For the proof we rely on pre-shape calculus rules we have established in \cref{Section_GeneralTheory}.
	In particular, we will make use of formulae found in  \cref{Cor_PreShapeCalc}. 
	So let $\Manifold$ fulfill the assumptions made in \cref{Theorem_ShapeDeriv}. 
	Fix a $\ShapeEmbedding\in\operatorname{Emb}_{\partial\Manifold}(\Manifold,\R^{n+1})$ and let $V\in C^{\infty}_{\partial \ShapeEmbedding(\Manifold)}(\R^{n+1},\R^{n+1})$. 
	The following arguments are all valid for $C^{k,\alpha}$-regularity.
	
	The use of pre-shape material derivative makes sense for families of differentiable functions on varying  domains $\{f_\ShapeEmbedding: \ShapeEmbedding(\Manifold) \rightarrow \R\}_{\ShapeEmbedding \in \operatorname{Emb}_{\partial\Manifold}(\Manifold, \R^{n+1})}$
	depending smoothly on $\ShapeEmbedding$, 
	since the limit $\PrShpDeriv_mf(\ShapeEmbedding)[V]$ involves the term $f_{\ShapeEmbedding_t}(x_t)$.
	An easy check reveals that the term is well defined due to \cref{Defi_PreShapeMaterialDerivDefi} of moving points $x_t$ and the perturbation of identity for pre-shapes \cref{Defi_PertuOfIdPreShapes} coinciding
	\begin{equation}
	x_t 
	=
	x_0 + t\cdot V(x_0) 
	=
	\ShapeEmbedding(\ShapeEmbedding^{-1}(x_0)) + t\cdot V\circ\ShapeEmbedding(\ShapeEmbedding^{-1}(x_0))
	=
	\ShapeEmbedding_t(\ShapeEmbedding^{-1}(x_0)) \in \ShapeEmbedding_t(\Manifold).
	\end{equation}
	However, note that in this case there is no decomposition of type \cref{Eq_MaterialDerivDecompo} for $\PrShpDeriv_m f(\ShapeEmbedding)$ , since $x_t\notin \ShapeEmbedding(\Manifold)$ and $x_0 \notin \ShapeEmbedding_t(\Manifold)$ in general.
	
	With this in mind, we can apply \cref{Cor_PreShapeCalc} (v) to \cref{Eq_FinalGeneralParamTrackingProblem} in order to get
	\begin{align}\label{Temp_PreShapeDeriv}
	\begin{split}
	\PrShpDeriv\TargetPrShp^\tau(\ShapeEmbedding)[V] = 
	\int_{\ShapeEmbedding(\Manifold)} \PrShpDeriv_m&\Big(\frac{1}{2}\big(g^\Manifold \circ\ShapeEmbedding^{-1}\cdot \frac{1}{\det D^\tau \ShapeEmbedding}\circ\ShapeEmbedding^{-1} - f_\ShapeEmbedding\big)^2\Big)[V] \\
	+ \frac{1}{2}&\operatorname{div}_{\Gamma}(V)\Big(g^\Manifold \circ\ShapeEmbedding^{-1}\cdot \frac{1}{\det D^\tau \ShapeEmbedding}\circ\ShapeEmbedding^{-1} - f_\ShapeEmbedding\Big)^2\;\diff s.
	\end{split}
	\end{align}
	For simplification of the material derivative of the integrand, we employ our assumption on existence of material derivatives for $f_\ShapeEmbedding$, together  with the chain- and product rule for material derivatives (cf. \cref{Cor_PreShapeCalc}), to see
	\begin{align}\label{Temp5}
	\begin{split}
	\PrShpDeriv_m&\Big(\frac{1}{2}\big(g^\Manifold \circ\ShapeEmbedding^{-1}\cdot \frac{1}{\det D^\tau \ShapeEmbedding}\circ\ShapeEmbedding^{-1} - f_\ShapeEmbedding\big)^2\Big)[V] \\
	=\; &(g^\Manifold \circ\ShapeEmbedding^{-1}\cdot \frac{1}{\det D^\tau \ShapeEmbedding}\circ\ShapeEmbedding^{-1} - f_\ShapeEmbedding\big)
	\cdot
	\Bigg(
	\PrShpDeriv_m(g^\Manifold \circ\ShapeEmbedding^{-1})[V]\cdot \frac{1}{\det D^\tau \ShapeEmbedding}\circ\ShapeEmbedding^{-1} \\
	\;\;&-\; g^\Manifold \circ\ShapeEmbedding^{-1} \cdot \frac{1}{(\det D^\tau \ShapeEmbedding)^2}\circ\ShapeEmbedding^{-1} \cdot \PrShpDeriv_m(\det D^\tau \ShapeEmbedding \circ \ShapeEmbedding^{-1})[V]
	- D_m(f_\ShapeEmbedding)[V]
	\Bigg).
	\end{split}
	\end{align}
	In the following we examine the remaining material derivatives in \cref{Temp5}, except for $\PrShpDeriv_m(f_\ShapeEmbedding)[V]$, as we let $f_\ShapeEmbedding$ remain general. 
	To avoid confusion, we remind the reader that we are confronted with mappings $h$ taking two arguments, one explicitly being a pre-shape, making them operators of the form
	\begin{align}
	h_\cdot(\cdot) := h(\cdot,\cdot):\; \operatorname{Emb}_{\partial\Manifold}(\Manifold,\R^{n+1})\times\Manifold \rightarrow \R,\; (\ShapeEmbedding,p) \mapsto h_\ShapeEmbedding(p). 
	\end{align}
	We will use the following relationship of embeddings and the domain perturbation of identity
	\begin{align}\label{Temp6}
	\ShapeEmbedding_t(p) = (T_t \circ \ShapeEmbedding)(p) \quad\forall p\in\Manifold
	\Leftrightarrow (\ShapeEmbedding_t^{-1}\circ T_t)(q) = \ShapeEmbedding^{-1}(q) \quad\forall q\in\ShapeEmbedding(\Manifold),
	\end{align}
	where $\ShapeEmbedding_t$ is the perturbation of identity for pre-shapes (cf. \cref{Defi_PertuOfIdPreShapes}) for sufficiently small $t>0$. 
	If material derivatives of $h$ are assumed to exist, \cref{Temp6} leads to the following elementary but interesting identity
	\begin{spacing}{1.}
	\begin{align}\label{Temp7}
	\PrShpDeriv_m\big(h_\ShapeEmbedding\circ\ShapeEmbedding^{-1}\big)[V] 
	=
	\frac{\diff}{\diff t}_{\vert t=0} h(\ShapeEmbedding_t, \ShapeEmbedding_t^{-1}\circ T_t)
	=
	\frac{\diff}{\diff t}_{\vert t=0} h(\ShapeEmbedding_t, \ShapeEmbedding^{-1}) 
	= 
	\PrShpDeriv(h_\ShapeEmbedding)[V]\circ\ShapeEmbedding^{-1}.
	\end{align} 
	\end{spacing}	
	Applying this to the first remaining material derivative in \cref{Temp5}, we get
	\begin{spacing}{1.}
	\begin{align}\label{Temp_MaterialDerivg}
	\begin{split}
	\PrShpDeriv_m\big(g^\Manifold\circ\ShapeEmbedding^{-1}\big)[V] 
	= 
	\PrShpDeriv(g^\Manifold)[V]\circ\ShapeEmbedding^{-1} 
	= 0,
	\end{split}
	\end{align}
	\end{spacing}
	since $g^\Manifold$ is does not depend on choice of $\ShapeEmbedding\in\operatorname{Emb}(\Manifold, \R^{n+1})$.
	
	Next, we apply analogous techniques to the second material derivative. 
	Hence, for calculation of the material derivative of $\det D^\tau\ShapeEmbedding \circ \ShapeEmbedding^{-1}$ it is sufficient to calculate its pre-shape derivative. 
	Also, since the flow $(\ShapeEmbedding_t)_{t\in[0,\varepsilon)}$ given by the perturbation of identity in direction $V$ (cf. \cref{Defi_PertuOfIdPreShapes}) is differentiable in $t$, we can employ Jacobi's formula for the derivative of the determinant at $t_0=0$ to arrive at
	\begin{spacing}{1.}
	\begin{align}\label{Temp8}
	\begin{split}
	\PrShpDeriv_m\big(\det D^\tau\ShapeEmbedding \circ \ShapeEmbedding^{-1}\big)[V]
	&= \Big(\frac{\diff}{\diff t}_{\vert t=t_0}\det D^\tau\ShapeEmbedding_t\Big)\circ \ShapeEmbedding^{-1}\\
	&=\operatorname{tr}\Big(\operatorname{Adju}(D^\tau\ShapeEmbedding_{t_0}) \frac{\diff}{\diff t}_{\vert t=t_0} D^\tau\ShapeEmbedding_t\Big)\circ\ShapeEmbedding^{-1} \\
	&\hspace{-0.15cm}\overset{t_0=0}{=}\operatorname{tr}\Big(\operatorname{Adju}(D^\tau \ShapeEmbedding) D^\tau(V\circ\ShapeEmbedding) \Big)\circ\ShapeEmbedding^{-1},
	\end{split}
	\end{align}
	\end{spacing}
	where $\operatorname{Adju}(\cdot)$ is the adjugate matrix and $\operatorname{tr}(\cdot)$ is the trace operator for matrices.

	Knowing $D^\tau \ShapeEmbedding$ is invertible for all $p \in \Manifold$ due to $\ShapeEmbedding \in \operatorname{Emb}(\Manifold, \R^{n+1})$, we can use Cramer's rule to express the adjugate in terms of inverses.
	Also, we can use invariance of the trace operator under permutations of multiplicative order of matrices, giving us
	\begin{spacing}{1.}
	\begin{align}\label{Temp8_new}
	\begin{split}
		\operatorname{tr}\Big(\operatorname{Adju}(D^\tau \ShapeEmbedding) D^\tau(V\circ\ShapeEmbedding) \Big)\circ\ShapeEmbedding^{-1} 
		&= 
		\operatorname{tr}\Big(\operatorname{det} (D^\tau\ShapeEmbedding)
		\cdot
		D^\tau \ShapeEmbedding^{-1} D^\tau(V\circ\ShapeEmbedding) \Big)\circ\ShapeEmbedding^{-1} \\
		&=
		(\operatorname{det} D^\tau\ShapeEmbedding) \circ\ShapeEmbedding^{-1} \cdot
		\operatorname{tr}
		\Big(
		(D^\tau \ShapeEmbedding)^{-1} D^\tau V(\ShapeEmbedding) D^\tau\ShapeEmbedding \Big)\circ\ShapeEmbedding^{-1} \\
		&=
		(\operatorname{det} D^\tau\ShapeEmbedding) \circ\ShapeEmbedding^{-1} \cdot
		\operatorname{tr}
		\Big(D^\tau V(\ShapeEmbedding) \Big)\circ\ShapeEmbedding^{-1}	\\
		&=
		(\operatorname{det} D^\tau\ShapeEmbedding) \circ\ShapeEmbedding^{-1} 
		\cdot
		\operatorname{div}_\Shape(V).
	\end{split}
	\end{align}	
	\end{spacing}
	Using \cref{Temp_MaterialDerivg} and \cref{Temp8_new} in \cref{Temp5}, and plugging the material derivative into \cref{Temp_PreShapeDeriv}, we arrive at 
	\begin{align}
	\begin{split}
	\PrShpDeriv\TargetPrShp^\tau(\ShapeEmbedding)[V] =& 
	\int_{\ShapeEmbedding(\Manifold)} 
	\big(g^\Manifold \circ\ShapeEmbedding^{-1}\cdot \frac{1}{\det D^\tau \ShapeEmbedding}\circ\ShapeEmbedding^{-1} - f_\ShapeEmbedding\big) \\
	&\qquad\cdot
	\Big( -g^\Manifold\circ\ShapeEmbedding^{-1}\cdot\frac{1}{\det D^\tau \ShapeEmbedding}\circ\ShapeEmbedding^{-1}\cdot \operatorname{div}_\Shape(V) - \PrShpDeriv_m(f_\ShapeEmbedding)[V]
	\Big)
	\\
	&\qquad + \frac{1}{2}\operatorname{div}_{\Shape}(V)\Big(g^\Manifold \circ\ShapeEmbedding^{-1}\cdot \frac{1}{\det D^\tau \ShapeEmbedding}\circ\ShapeEmbedding^{-1} - f_\ShapeEmbedding\Big)^2\;\diff s 
	\\
	=&-\int_{\ShapeEmbedding(\Manifold)} 
	\Big(g^\Manifold\circ\ShapeEmbedding^{-1}\cdot\frac{1}{\det D^\tau \ShapeEmbedding}\circ\ShapeEmbedding^{-1} - f_\ShapeEmbedding\Big) \\
	&\qquad \cdot \Bigg(
	\frac{1}{2}\Big(g^\Manifold\circ\ShapeEmbedding^{-1}\cdot\frac{1}{\det D^\tau \ShapeEmbedding}\circ\ShapeEmbedding^{-1} + f_\ShapeEmbedding\Big)\cdot\operatorname{div}_\Shape(V) 
	+  \PrShpDeriv_m(f_\ShapeEmbedding)[V]
	\Bigg)\;\diff s \\
	=&-\int_{\ShapeEmbedding(\Manifold)} \frac{1}{2}\cdot\Big((g^\Manifold\circ\ShapeEmbedding^{-1}\cdot \frac{1}{\det D^\tau \ShapeEmbedding}\circ\ShapeEmbedding^{-1})^2 - f_{\ShapeEmbedding}^2\Big)
	\cdot\operatorname{div}_\Shape (V) 
	\\
	&\qquad\qquad+
	\Big(g^\Manifold\circ\ShapeEmbedding^{-1}\cdot \frac{1}{\det D^\tau \ShapeEmbedding}\circ\ShapeEmbedding^{-1} - f_\ShapeEmbedding\Big)
	\cdot
	\PrShpDeriv_m(f_\ShapeEmbedding)[V] \;\diff s,
	\end{split}
	\end{align}
	which is the desired pre-shape derivative \cref{Eq_ShapeDerivTang}.
	The covariant derivative $D^\tau \ShapeEmbedding$, and hence also the pre-shape derivative \cref{Eq_ShapeDerivTang}, is  independent of choice of orthonormal frames by similar reasoning as in the first part of the proof to  \cref{Theorem_ExistenceGeneralProblem}.
\end{proof}

In general cases, $\PrShpDeriv\TargetPrShp^{\tau}(\ShapeEmbedding)[V]$ is non-vanishing for vector fields $V$ tangential to $\ShapeEmbedding(\Manifold)$. 
By structure theorem for pre-shape derivatives \cref{Theorem_PreShapeHadamard}, globally vanishing tangential pre-shape derivatives indicate a functional which is almost of classical shape functional type. 
If we take the form of the pre-shape derivative \cref{Eq_ShapeDerivTang}, this clearly means \cref{Eq_FinalGeneralParamTrackingProblem} cannot be formulated as a shape optimization problem, nor is it tractable by classical shape calculus.

In light of the main structure \cref{Theorem_PreShapeHadamard} for pre-shape derivatives, we can further refine the representation of the pre-shape derivative \cref{Eq_ShapeDerivTang} by decomposing it into normal and tangential parts.
Interestingly, if the user wants to optimize for mesh quality by using pre-shape derivative based parameterization tracking, it is not recommendable to use the full pre-shape derivative found in equation  \cref{Eq_ShapeDerivTang}.
Instead, by decomposing the pre-shape derivative, we will see that the tangential component is sufficient for this task.
If a special case of the pre-shape derivatives normal component is used, we recover numerical methods solving Plateau's problem by constructing minimal surfaces (cf. \cite{dziuk1990algorithm, pinkall1993computing,  dziuk1999discrete}).
In some sense orthogonal to this, the use of tangential components gives algorithms resembling the deformation method for opimization of mesh quality (cf. \cite{liao1992new, grajewski2009mathematical, cao2002moving}).

\paragraph{Decomposing the Pre-Shape Derivative}

To derive this decomposition, notice the following informal relationship between tangential divergence and the mean curvature $\kappa$ for hypersurfaces for $V\in C^\infty(\R^{n+1}, \R^{n+1})$ (cf.\cite[Defi. 4.23]{Lee})
\begin{spacing}{1.}
\begin{equation}
\operatorname{div}_\Shape(\langle V, n \rangle \cdot n) 
=
(\nabla_\Shape \langle V, n \rangle )^T n + \langle V, n \rangle\cdot \operatorname{div}_\Shape(n)
= 
\operatorname{dim}(\Manifold) \cdot\langle V, n \rangle \cdot \kappa, 
\end{equation}	
\end{spacing}
due to orthogonality of tangential gradients $\nabla_\Shape(\langle V, n \rangle)$ and the outer normal vector field $n$ on the interior of $\ShapeEmbedding(\Manifold)$.
Also, let us briefly assume $f_\ShapeEmbedding: \R^{n+1} \rightarrow \R$ mapping from the whole ambient space, which simplifies using normal derivatives of $f_\ShapeEmbedding$ for the decomposition.
With this, and the assumption of constant target parameterizations for each fiber, i.e.  $f_{\ShapeEmbedding} = f_{\ShapeEmbedding\circ\Diffeomorphism}$ for all $\Diffeomorphism\in\operatorname{Diff}(\Manifold)$, we can refine  \cref{Eq_ShapeDerivTang} to
\begin{equation}\label{Eq_PrShpDerivativeTrackingDecompo}
\PrShpDeriv\TargetPrShp^\tau(\ShapeEmbedding)[V] = \langle g_\ShapeEmbedding^\NormalSpaceShape, V \rangle + \langle g_\ShapeEmbedding^\TangentSpaceShape, V \rangle \quad \forall V\in C^\infty(\R^{n+1}, \R^{n+1}),
\end{equation}
with shape (i.e. normal) component
\begin{align}\label{Eq_PreShapeDerTrackingNormal}
\begin{split}
\langle g_\ShapeEmbedding^\NormalSpaceShape, V \rangle 
= 
&-\int_{\ShapeEmbedding(\Manifold)} \frac{\operatorname{dim}(\Manifold)}{2}\cdot\Big(\big(g^\Manifold\circ\ShapeEmbedding^{-1}\cdot \det D^\tau \ShapeEmbedding^{-1}\big)^2 - f_{\ShapeEmbedding}^2\Big)
\cdot\kappa\cdot \langle V, n\rangle 
\\
&\qquad\qquad\quad+
\Big(g^\Manifold\circ\ShapeEmbedding^{-1}\cdot \det D^\tau \ShapeEmbedding^{-1} - f_\ShapeEmbedding\Big)
\cdot
\Big(
\frac{\partial f_\ShapeEmbedding}{\partial n} \cdot \langle V, n \rangle + \ShpDeriv(f_\ShapeEmbedding)[V]
\Big) \;\diff s 
\end{split}
\end{align}
and pre-shape (i.e. tangential) component
\begin{align}\label{Eq_PreShapeDerTrackingTangential}
\begin{split}
\langle g_\ShapeEmbedding^\TangentSpaceShape, V \rangle 
= 
&-\int_{\ShapeEmbedding(\Manifold)} \frac{1}{2}\cdot\Big(\big(g^\Manifold\circ\ShapeEmbedding^{-1}\cdot \det D^\tau \ShapeEmbedding^{-1}\big)^2 - f_{\ShapeEmbedding}^2\Big)
\cdot\operatorname{div}_\Shape (V - \langle V, n \rangle \cdot n) 
\\
&\qquad\qquad\quad+
\Big(g^\Manifold\circ\ShapeEmbedding^{-1}\cdot \det D^\tau \ShapeEmbedding^{-1} - f_\ShapeEmbedding\Big)
\cdot
\nabla_\Shape f_\ShapeEmbedding^T V \;\diff s.
\end{split}
\end{align}
Here, $\ShpDeriv(f_\ShapeEmbedding)$ is the classical shape derivative of $f_\ShapeEmbedding$.
The first integral corresponds to the classical shape derivative component $g^{\NormalSpaceShape}$ of decomposition \cref{Thrm_HadamardSplittingPreShDir} acting on normal directions. 
Next, the second integral acts on tangential directions and therefore corresponds to the parameterization part $g^{\TangentSpaceShape}$ in \cref{Thrm_HadamardSplittingPreShDir}.

\paragraph{Normal Component: Minimal Surfaces}
For illustration, let us deviate from normalization of the target (cf. \cref{Assumption_Theorem1fNormed}) by chosing $f_\ShapeEmbedding=0$ and $g^\Manifold=1$ for all $\ShapeEmbedding\in\operatorname{Emb}_{\partial\Manifold}(\Manifold, \R^{n+1})$.

In this situation the classical shape derivative component of $\PrShpDeriv\TargetPrShp^\tau(\ShapeEmbedding)$ is given by
\begin{equation}\label{Eq_PreShapeDerivHorizontalMinimalSurf}
\langle g_{\ShapeEmbedding}^{\NormalSpaceShape},V\rangle = -\frac{\operatorname{dim}(\Manifold)}{2}\cdot\int_{\ShapeEmbedding(\Manifold)}\big(\operatorname{det}D^\tau \ShapeEmbedding^{-1}(s)\big)^2\cdot \kappa(s)\cdot \big\langle V(s), n(s)\big\rangle \; \diff s.
\end{equation}
Since $\ShapeEmbedding$ are embeddings and $\Manifold$ compact, according Jacobians are bounded and non-vanishing.
In our special situation this means the horizontal component of the pre-shape derivative \cref{Eq_PreShapeDerivHorizontalMinimalSurf} is vanishing exactly for shapes with vanishing mean curvature $\kappa$.
Put differently, minimal surfaces and their higher dimension analogues are exactly the stationary points for this horizontal component.

Hence to this observation, a gradient ascend using \cref{Eq_PreShapeDerivHorizontalMinimalSurf} resembles an algorithm for evolutionary surfaces proposed by Dziuk in \cite{dziuk1990algorithm}, which solves Plateau's problem by approximating a mean curvature flow.
Note that an ascend is necessary, since our formulation of the pre-shape parameterization tracking problem involves inverse Jacobians,
which is connected to Plateau's problem by
	\begin{spacing}{1.0}
	\begin{equation}
	\max_{\ShapeEmbedding \in \operatorname{Emb}_{\partial\Manifold}(\Manifold, \R^{n+1})} \int_{\ShapeEmbedding(\Manifold)} \big(\operatorname{det}D^\tau \ShapeEmbedding^{-1}(s)\big)^2  \; \diff s 
	\Leftrightarrow
	\min_{\ShapeEmbedding \in \operatorname{Emb}_{\partial\Manifold}(\Manifold, \R^{n+1})} 
	\int_{\Manifold} \vert \operatorname{det}D^\tau \ShapeEmbedding(s) \vert \; \diff s.
	\end{equation}
	\end{spacing}
This also illustrates qualitative properties of a steepest descent using the complete pre-shape derivative.
Briefly stated, if less  vertices are desired at a location, the gradient descent in normal direction tends to blow up the shape, increasing distances of neighboring vertices.
If more vertices are desired it tends to locally flatten the shape, driving the nodes together.
This shows that application of the full pre-shape derivative distorts the shape in normal direction, hence interfering with the actual shape optimization problem to be regularized.

Use of the tangential component in \cref{Eq_PrShpDerivativeTrackingDecompo} leads to algorithms similar to the mesh deformation methods from \cite{liao1992new, grajewski2009mathematical, cao2002moving}.
Such a routine is discussed and implemented in the numerical \cref{Section_NumericsGeneral}.
We will use it in upcoming works to construct various regularization methods for shape optimization problems.

An illustrative numerical example of a pre-shape derivative for the parameterization tracking problem and its decomposition are shown in \cref{Fig_SurfGradDecompo} for target $f_\ShapeEmbedding(x, y, z) \equiv \frac{1}{\int_{\ShapeEmbedding(\Manifold)}1\;\diff s}\cdot x$ and a sphere centered at $(0.5,0.5,0.5)$.
\begin{figure}[h]
\begin{tabular}{cr}
		\begin{subfigure}{0.5\textwidth}
		\includegraphics[width=0.9\linewidth, height=5cm]{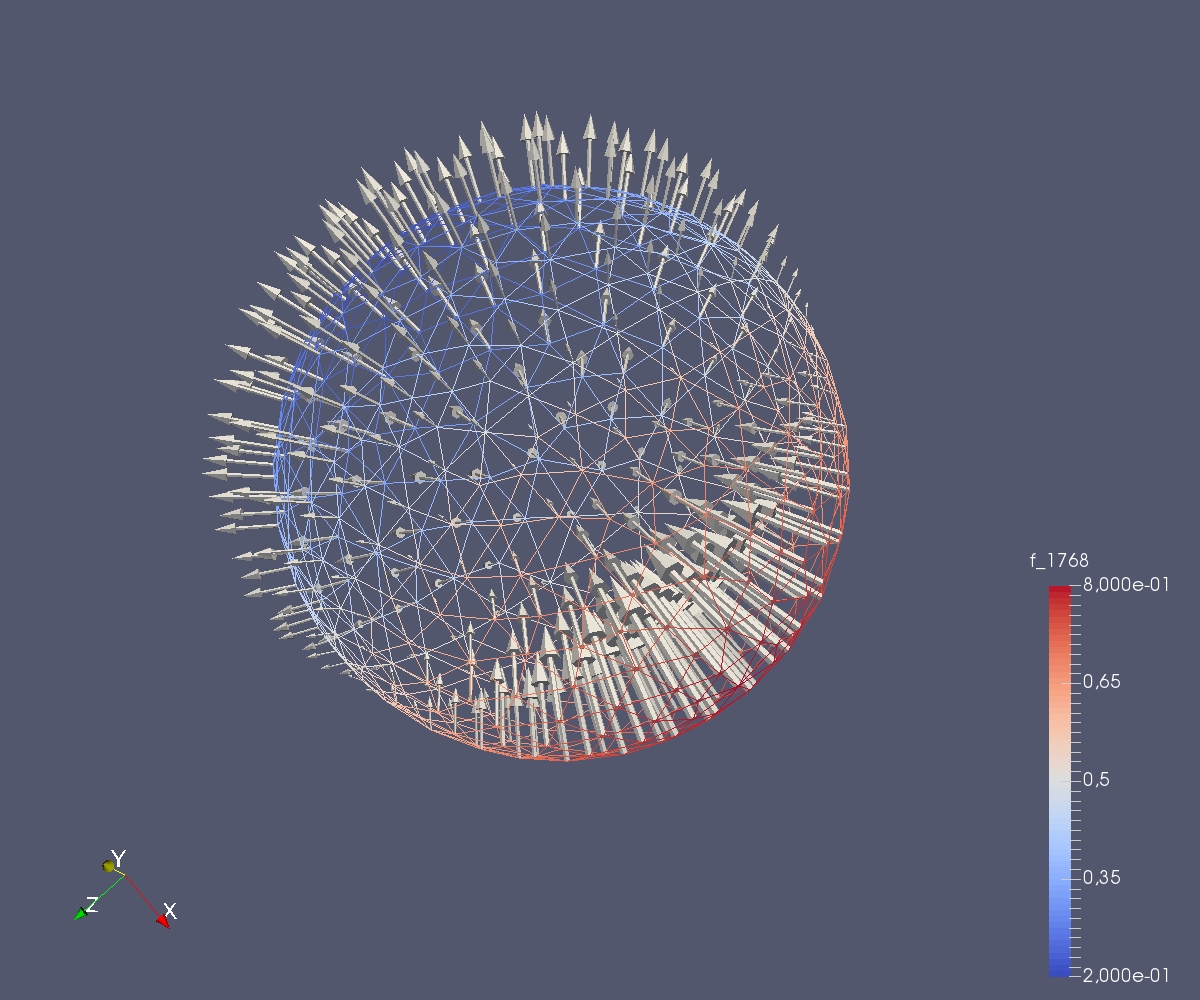}
		\subcaption{Normal component representing the classical shape part}
		\end{subfigure} 

		&\begin{subfigure}{0.5\textwidth}
		\includegraphics[width=0.9\linewidth, height=5cm]{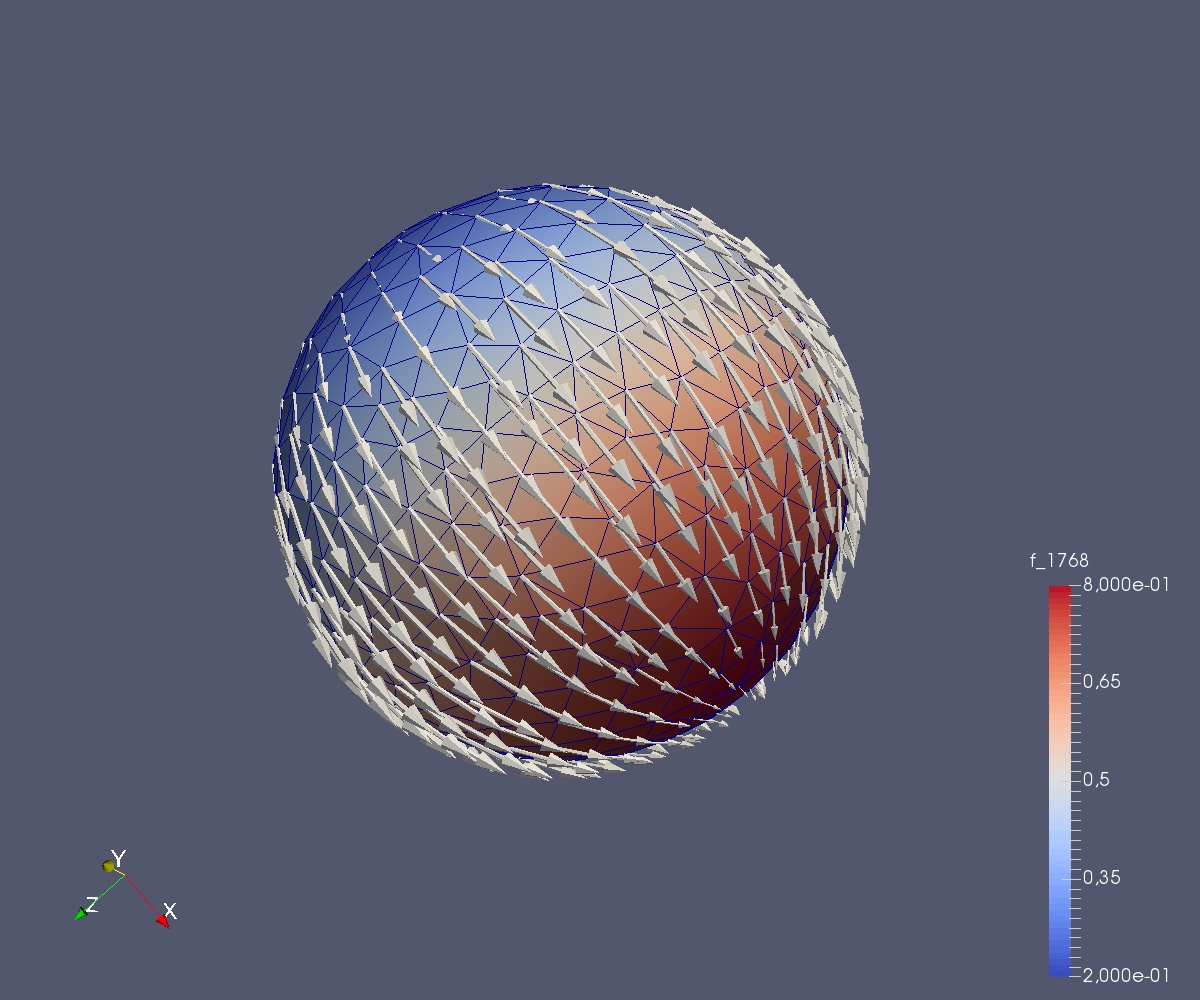}
		\subcaption{Tangential component representing the parameterization part}
		\end{subfigure} \\

		\begin{subfigure}{0.5\textwidth}
		\includegraphics[width=0.9\linewidth, height=5cm]{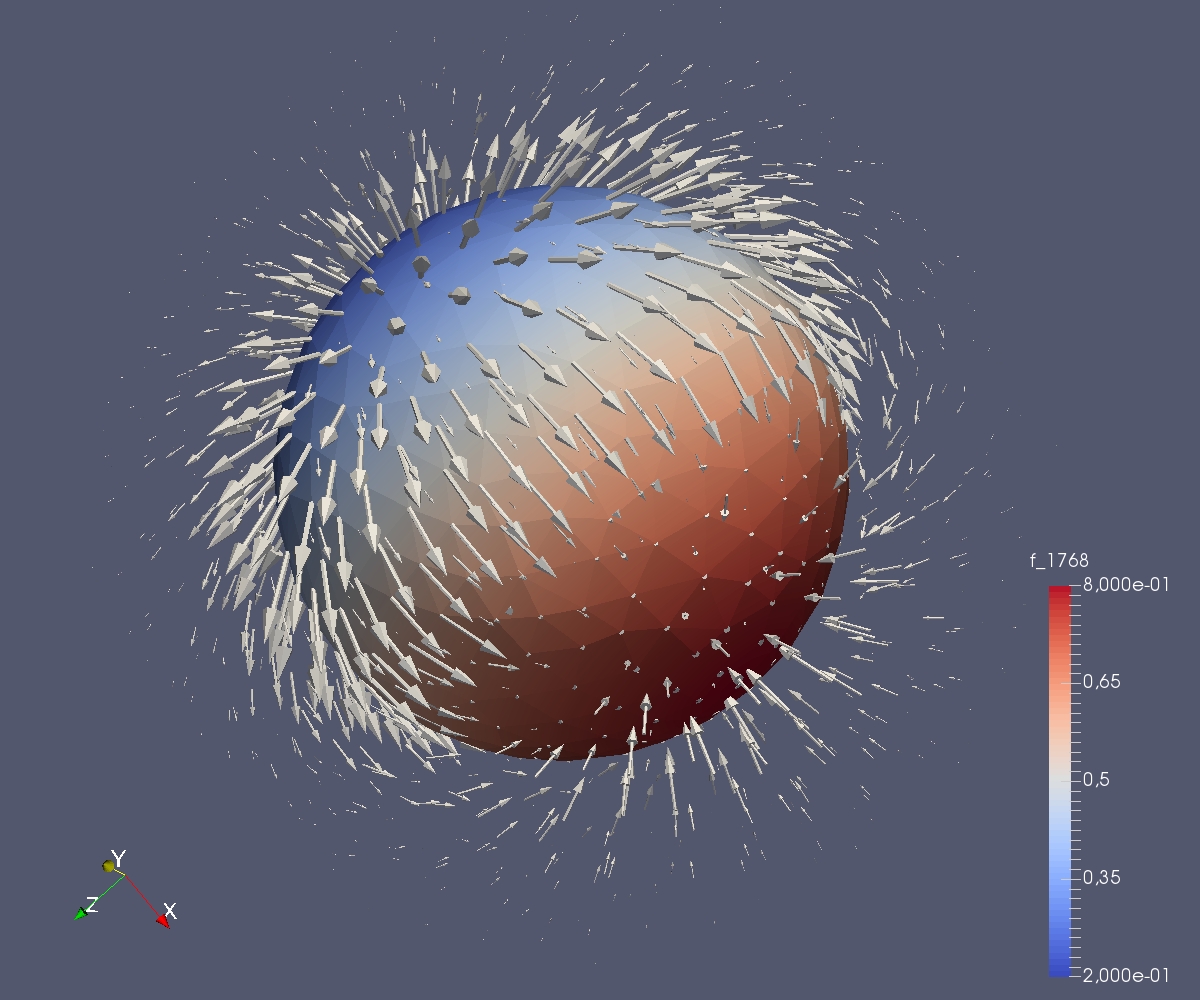}
		\subcaption{Complete pre-shape gradient in volume mesh representation}
		\end{subfigure}

		&\begin{subfigure}{0.5\textwidth}
		\includegraphics[width=0.9\linewidth, height=5cm]{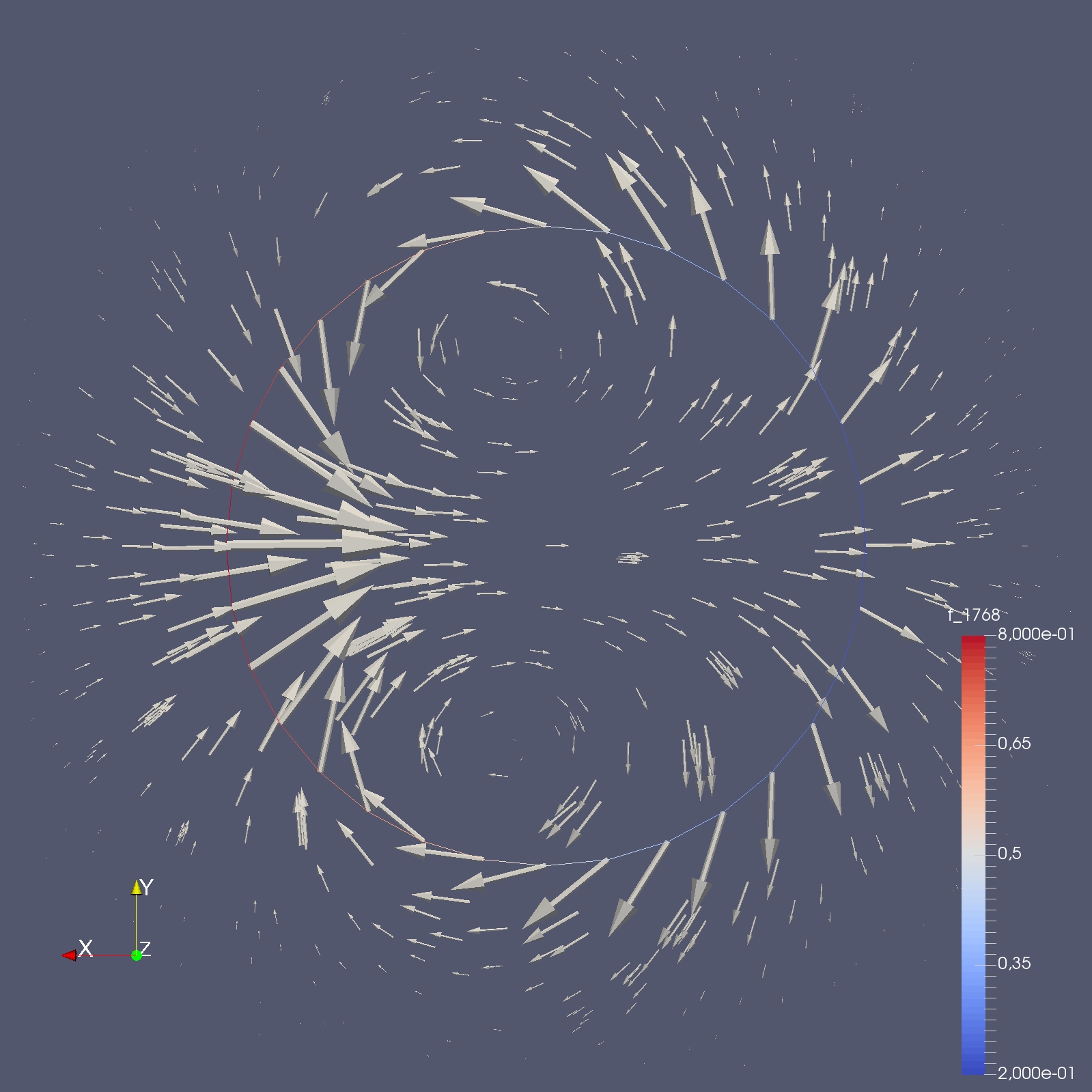}
		\subcaption{Slice through $xy$-plane at center of complete volume pre-shape gradient}
		\end{subfigure}
\end{tabular}
	\caption{\label{Fig_SurfGradDecompo}Negative pre-shape gradient of $\TargetPrShp^\tau$ on a sphere scaled by $0.02$ using target $f_\ShapeEmbedding(x, y, z) \equiv \frac{1}{\int_{\ShapeEmbedding(\Manifold)}1\;\diff s}\cdot x$, which is depicted by color, and gradient representation by a linear elasticity metric. Color shifting towards red means higher desire fore more volume/vertex allocation.}	
\end{figure}

\paragraph{A class of externally defined targets $f_\ShapeEmbedding$}
As we have left the target $f_\ShapeEmbedding$ unspecified during derivation of the pre-shape derivative \cref{Eq_ShapeDerivTang}, we want to give an constructive example, which can be implemented in numerical routines.
To do so, we have to keep in mind that the normalization requirement \cref{Assumption_Theorem1fNormed} on $f_\ShapeEmbedding$ has to be fulfilled.
One way to accomplish this, is by defining $f_\ShapeEmbedding$ using a given globally defined function $\RieszEnergyExtForce: \R^{n+1} \rightarrow (0,\infty)$.
By assuming $H^2$-regularity for $\RieszEnergyExtForce$, existence of pre-shape derivatives and their closed form as in \cref{Eq_ShapeDerivTang} is guaranteed.
The according target vertex point density on a shape $\ShapeEmbedding(\Manifold)$ is then given by
\begin{spacing}{1.0}
\begin{equation}\label{Eq_ExternalForceNormed}
f_{\ShapeEmbedding} 
=
\frac{\int_{\Manifold}g^{\Manifold}\;\diff s}{\int_{\ShapeEmbedding(\Manifold)}\RieszEnergyExtForce_{\vert\ShapeEmbedding(\Manifold)}\;\diff s}\cdot \RieszEnergyExtForce_{\vert\ShapeEmbedding(\Manifold)},
\end{equation}
\end{spacing}
which is well-defined due to the trace theorem for Sobolev functions. 
	
If the target $f_\ShapeEmbedding$ is chosen such that normalization \cref{Assumption_Theorem1fNormed} is not fulfilled, solutions to the parameterization tracking problem \cref{Eq_FinalGeneralParamTrackingProblem} might still exist. 
Depending on whether $\int_{\Manifold}g^\Manifold\;\diff s$ is greater or smaller $\int_\Manifold f_\ShapeEmbedding\;\diff s$, the gradient flow generated by the pre-shape derivative \cref{Eq_ShapeDerivTang} locally shrinks or blows up the shape $\ShapeEmbedding(\Manifold)$ in normal direction to compensate for the difference.

Next we calculate $\PrShpDeriv_m(f_{\ShapeEmbedding})[V]$, which exists since we have $\RieszEnergyExtForce_{\vert\ShapeEmbedding(\Manifold)}\in H^1\big(\ShapeEmbedding(\Manifold)\big)$.
For this, we apply pre-shape calculus rules established in \cref{Cor_PreShapeCalc}.
Also, since the external force $\RieszEnergyExtForce: \R^{n+1}\rightarrow (0,\infty)$ is defined on the entire ambient space, we can make direct use of decomposition for pre-shape material derivatives \cref{Cor_MaterialDeriv}.
Together with the fact that $\RieszEnergyExtForce$ and $g^\Manifold$ do not depend on $\ShapeEmbedding$, this gives
\begin{align}\label{Eq_MaterialDerivativefExtForce}
\begin{split}
\PrShpDeriv_m(f_{\ShapeEmbedding})[V] 
&=
\PrShpDeriv_m\Bigg(		\frac{\int_{\Manifold}g^{\Manifold}\;\diff s}{\int_{\ShapeEmbedding(\Manifold)}\RieszEnergyExtForce\;\diff s}\cdot \RieszEnergyExtForce\Bigg)[V] \\
&= -\frac{\int_{\Manifold}g^{\Manifold}\;\diff s}{\big(\int_{\ShapeEmbedding(\Manifold)}\RieszEnergyExtForce\;\diff s\big)^2}
\cdot\RieszEnergyExtForce\cdot
\int_{\ShapeEmbedding(\Manifold)}\big(\PrShpDeriv_m(\RieszEnergyExtForce)[V] 
+ \operatorname{div}_{\Shape}(V)\cdot\RieszEnergyExtForce\big)\;\diff s  \\
&\qquad+ 
\frac{\int_{\Manifold}g^{\Manifold}\;\diff s}{\int_{\ShapeEmbedding(\Manifold)}\RieszEnergyExtForce\;\diff s} 
\cdot 
\Big(\PrShpDeriv(\RieszEnergyExtForce)[V] + \nabla \RieszEnergyExtForce^T V\Big)	\\
&=  -\frac{\int_{\Manifold}g^{\Manifold}\;\diff s}{\big(\int_{\ShapeEmbedding(\Manifold)}\RieszEnergyExtForce\;\diff s\big)^2}
\cdot\RieszEnergyExtForce\cdot
\int_{\ShapeEmbedding(\Manifold)}\Big(
\nabla\RieszEnergyExtForce^T V
+
\operatorname{div}_{\Shape}(V)
\cdot
\RieszEnergyExtForce\Big)\;\diff s
+  
\frac{\int_{\Manifold}g^{\Manifold}\;\diff s}{\int_{\ShapeEmbedding(\Manifold)}\RieszEnergyExtForce\;\diff s} 
\cdot 
\nabla \RieszEnergyExtForce^T V \\
&=
-\frac{\int_{\Manifold}g^{\Manifold}\;\diff s}{\big(\int_{\ShapeEmbedding(\Manifold)}\RieszEnergyExtForce\;\diff s\big)^2}
\cdot\RieszEnergyExtForce\cdot
\int_{\ShapeEmbedding(\Manifold)} \frac{\partial \RieszEnergyExtForce}{\partial n}\cdot\langle V, n \rangle \;\diff s
+
\frac{\int_{\Manifold}g^{\Manifold}\;\diff s}{\int_{\ShapeEmbedding(\Manifold)}\RieszEnergyExtForce\;\diff s} 
\cdot
\nabla \RieszEnergyExtForce^T V,
\end{split}
\end{align}
where the last equality stems from Stokes theorem and our assumption on $V$ to vanish on the boundary $\partial \ShapeEmbedding(\Manifold)$.

	Next, we illustrate how the closed pre-shape derivative formula can be used to derive additional important properties of pre-shape optimization problems.
	In particular, we will see that local and global solutions parameterization tracking in each fiber coincide.	
	\begin{proposition}[Characterization of Global Solutions by Fiber  Stationarity]\label{Theorem_GlobalSolutionParamTrackingStationaryPoints}
		Let assumptions of \cref{Theorem_ShapeDeriv} be satisfied.
		Then the following statements are equivalent:
		\begin{enumerate}
			\item[(i)]  $\ShapeEmbedding\in \operatorname{Emb}(\Manifold, \R^{n+1})$ is a fiber stationary point of \cref{Eq_FinalGeneralParamTrackingProblem}, i.e.
			\begin{equation}\label{Eq_FiberCriticalPoint}
			\PrShpDeriv\TargetPrShp^\tau(\ShapeEmbedding)[V] = 0 \quad \forall V\in C_{\partial\Manifold}^\infty(\R^{n+1}, \R^{n+1}) \text{ with } \langle \operatorname{Tr}_{\vert \operatorname{int} \ShapeEmbedding(\Manifold)}[V], n\rangle_2 = 0,
			\end{equation}
			where $\operatorname{int} \ShapeEmbedding(\Manifold)$ is the interior of $\ShapeEmbedding(\Manifold)$ and $n$ is the outer normal field on  $\operatorname{int}\ShapeEmbedding(\Manifold)$.
			\item[(ii)] $\ShapeEmbedding$ is a global solution to \cref{Eq_FinalGeneralParamTrackingProblem}, and in particular it satisfies 
			\begin{equation}\label{Temp19}
			g^\Manifold\circ\ShapeEmbedding^{-1}\cdot \det D^\tau \ShapeEmbedding^{-1} = f_{\ShapeEmbedding} \; \text{ on } \ShapeEmbedding(\Manifold).
			\end{equation}
			\item[(iii)] the complete pre-shape derivative of $\TargetPrShp^\tau$ vanishes in $\ShapeEmbedding$, i.e.
			\begin{equation}\label{Eq_Stationarity}
				\PrShpDeriv\TargetPrShp^\tau(\ShapeEmbedding) = 0 \quad \forall V\in C_{\partial\Manifold}^\infty(\R^{n+1}, \R^{n+1}).
			\end{equation}
			Additionally, its normal component $g_{\ShapeEmbedding}^\NormalSpaceShape$ in $\ShapeEmbedding$ vanishes.
		\end{enumerate}
		In particular, the necessary first order condition regarding only directions $V$ tangential to $\ShapeEmbedding(\Manifold)$ is already a sufficient condition for being a global minimizer to $\TargetPrShp^\tau$.
	\end{proposition}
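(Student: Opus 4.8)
The plan is to prove the cycle (ii) $\Rightarrow$ (iii) $\Rightarrow$ (i) $\Rightarrow$ (ii), after first recording that (ii) is equivalent to the pointwise identity \cref{Temp19}. Throughout I abbreviate $a_\ShapeEmbedding := g^\Manifold\circ\ShapeEmbedding^{-1}\cdot\frac{1}{\det D^\tau \ShapeEmbedding}\circ\ShapeEmbedding^{-1}$ and $\rho_\ShapeEmbedding := a_\ShapeEmbedding - f_\ShapeEmbedding$, so that $\TargetPrShp^\tau(\ShapeEmbedding) = \frac{1}{2}\int_{\ShapeEmbedding(\Manifold)}\rho_\ShapeEmbedding^2\,\diff s$ and \cref{Temp19} reads $\rho_\ShapeEmbedding\equiv 0$. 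Two elementary facts come first. Since $g^\Manifold>0$ and $\ShapeEmbedding$ is an immersion, $a_\ShapeEmbedding>0$ everywhere. Moreover the change of variables $s=\ShapeEmbedding(p)$ yields $\int_{\ShapeEmbedding(\Manifold)}a_\ShapeEmbedding\,\diff s = \int_\Manifold g^\Manifold\,\diff p$, which by the normalization \cref{Assumption_Theorem1fNormed} equals $\int_{\ShapeEmbedding(\Manifold)}f_\ShapeEmbedding\,\diff s$; hence $\int_{\ShapeEmbedding(\Manifold)}\rho_\ShapeEmbedding\,\diff s = 0$ for every admissible $\ShapeEmbedding$.

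The equivalence of (ii) with $\rho_\ShapeEmbedding\equiv 0$ is immediate: $\TargetPrShp^\tau\geq 0$, and \cref{Theorem_ExistenceGeneralProblem} provides an embedding in the fiber $\ProjectionCanonical(\ShapeEmbedding)$ on which the objective vanishes, so the global infimum is $0$ and is attained exactly where $\rho_\ShapeEmbedding\equiv 0$. For (ii) $\Rightarrow$ (iii) I would substitute $\rho_\ShapeEmbedding\equiv 0$ into \cref{Eq_ShapeDerivTang}: the first summand carries the factor $\frac{1}{2}(a_\ShapeEmbedding^2 - f_\ShapeEmbedding^2) = \frac{1}{2}\rho_\ShapeEmbedding(a_\ShapeEmbedding+f_\ShapeEmbedding)$ and the second the factor $\rho_\ShapeEmbedding$, so both vanish and $\PrShpDeriv\TargetPrShp^\tau(\ShapeEmbedding)=0$ on all of $C^\infty_{\partial\Manifold}(\R^{n+1},\R^{n+1})$. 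The same factorization applied to the normal part \cref{Eq_PreShapeDerTrackingNormal} gives $g_\ShapeEmbedding^\NormalSpaceShape=0$. The implication (iii) $\Rightarrow$ (i) is trivial, since \cref{Eq_FiberCriticalPoint} merely tests \cref{Eq_Stationarity} against the subset of tangential directions.

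The heart of the argument is (i) $\Rightarrow$ (ii). Restricting to $V$ tangential along $\operatorname{int}\ShapeEmbedding(\Manifold)$ and vanishing on the boundary, I would start from \cref{Cor_PreShapeCalc}~(v) applied to $\TargetPrShp^\tau=\frac{1}{2}\int\rho_\ShapeEmbedding^2\,\diff s$ and use $\PrShpDeriv_m a_\ShapeEmbedding[V] = -a_\ShapeEmbedding\operatorname{div}_\Shape(V)$, which follows from the Jacobi/Cramer computation \cref{Temp8_new} already carried out for \cref{Theorem_ShapeDeriv}, together with $\PrShpDeriv_m f_\ShapeEmbedding[V] = \nabla_\Shape f_\ShapeEmbedding^T V$, valid for tangential $V$ because $f$ has shape functionality and its shape/local derivative therefore drops out by \cref{Cor_MaterialDeriv}. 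Collecting terms and using $-\rho_\ShapeEmbedding a_\ShapeEmbedding + \frac{1}{2}\rho_\ShapeEmbedding^2 = -\frac{1}{2}(a_\ShapeEmbedding^2 - f_\ShapeEmbedding^2)$, an integration by parts via the tangential divergence theorem (the curvature and boundary contributions drop because $V$ is tangential and vanishes on $\partial\ShapeEmbedding(\Manifold)$) collapses the expression to the clean weak form
\begin{equation*}
\PrShpDeriv\TargetPrShp^\tau(\ShapeEmbedding)[V] = \int_{\ShapeEmbedding(\Manifold)} a_\ShapeEmbedding\,\nabla_\Shape\rho_\ShapeEmbedding^T V\,\diff s .
\end{equation*}
As every smooth tangential field on $\operatorname{int}\ShapeEmbedding(\Manifold)$ supported away from the boundary arises as the restriction of such a $V$, the fundamental lemma of the calculus of variations forces $a_\ShapeEmbedding\nabla_\Shape\rho_\ShapeEmbedding = 0$, whence $\nabla_\Shape\rho_\ShapeEmbedding = 0$ by positivity of $a_\ShapeEmbedding$. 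Path-connectedness of $\Manifold$ then renders $\rho_\ShapeEmbedding$ constant on $\operatorname{int}\ShapeEmbedding(\Manifold)$, and the zero-mean identity $\int\rho_\ShapeEmbedding\,\diff s = 0$ forces this constant to be $0$. This is \cref{Temp19}, hence (ii), and the closing claim of the proposition follows since only tangential test directions were used.

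I expect the main obstacle to be the bookkeeping in this last paragraph: correctly identifying $\PrShpDeriv_m f_\ShapeEmbedding[V] = \nabla_\Shape f_\ShapeEmbedding^T V$ for the \emph{general} shape-functional family $f$ (not only for the externally defined target of \cref{Eq_ExternalForceNormed}), and tracking the sign through the tangential integration by parts so that the two $\nabla_\Shape f_\ShapeEmbedding$ contributions cancel and leave precisely $a_\ShapeEmbedding\nabla_\Shape\rho_\ShapeEmbedding$. A secondary delicate point is justifying that tangential fields supported away from $\partial\ShapeEmbedding(\Manifold)$ already suffice to conclude $\nabla_\Shape\rho_\ShapeEmbedding = 0$ on the whole interior, which is exactly where path-connectedness and the vanishing-mean constraint are genuinely needed.
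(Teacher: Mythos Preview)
Your proposal is correct and follows essentially the same route as the paper's proof: the same cycle $(ii)\Rightarrow(iii)\Rightarrow(i)\Rightarrow(ii)$, the same integration by parts on the tangential test directions to reach the weak form $\int_{\ShapeEmbedding(\Manifold)} a_\ShapeEmbedding\,\nabla_\Shape\rho_\ShapeEmbedding^T V\,\diff s$, and the same conclusion via positivity of $a_\ShapeEmbedding$, path-connectedness, and the normalization condition. Your abbreviations $a_\ShapeEmbedding,\rho_\ShapeEmbedding$ and the explicitly recorded zero-mean identity $\int\rho_\ShapeEmbedding\,\diff s=0$ make the final step slightly cleaner than the paper's phrasing.

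One small caveat on the bookkeeping you already flagged: the literal decomposition of \cref{Cor_MaterialDeriv} does not apply here, since $f_\ShapeEmbedding$ is only defined on $\ShapeEmbedding(\Manifold)$ (the paper notes this explicitly in the proof of \cref{Theorem_ShapeDeriv}). The correct justification of $\PrShpDeriv_m f_\ShapeEmbedding[V]=\nabla_\Shape f_\ShapeEmbedding^T V$ for tangential $V$ is more direct: tangential $V$ keeps $x_t\in\ShapeEmbedding(\Manifold)$ and $\ShapeEmbedding_t(\Manifold)=\ShapeEmbedding(\Manifold)$, so shape functionality gives $f_{\ShapeEmbedding_t}=f_\ShapeEmbedding$ as functions on the fixed image, whence $\frac{\diff}{\diff t}\big|_{t=0} f_{\ShapeEmbedding_t}(x_t)=\frac{\diff}{\diff t}\big|_{t=0} f_\ShapeEmbedding(x_t)=\nabla_\Shape f_\ShapeEmbedding^T V$. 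This is exactly how the paper handles it, and with this adjustment your argument matches theirs.
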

	
	\begin{proof}
		Let us assume the setting of \cref{Theorem_ShapeDeriv}. 
		We show equivalence of all assertions by a circular argument '$(i) \implies (ii) \implies (iii) \implies (i)'$.
		
		As a start, implication '$(ii) \implies (iii)$' is trivial.
		By assuming (ii), we can use relation \cref{Temp19} to see that the two integrands of $\PrShpDeriv\TargetPrShp^\tau(\ShapeEmbedding)[V]$ (cf. \cref{Eq_ShapeDerivTang}) featuring $g^\Manifold\circ\ShapeEmbedding^{-1}\cdot \det D^\tau \ShapeEmbedding^{-1}$ and $f_{\ShapeEmbedding}$ are zero for all directions $V\in C_{\partial\Manifold}^\infty(\R^{n+1}, \R^{n+1})$.
		The same argument applies for the normal component $g_\ShapeEmbedding^\NormalSpaceShape$ by using the explicit decomposition \cref{Eq_PrShpDerivativeTrackingDecompo}.
		Hence we immediately get \cref{Eq_Stationarity}.
		
		The non-trivial part is to prove '$(i) \implies (ii)$'.
		Let us assume (i) by fixing a $\ShapeEmbedding\in \operatorname{Emb}(\Manifold, \R^{n+1})$ satisfying fiber stationarity \cref{Eq_FiberCriticalPoint}.
		With the pre-shape derivative formula from  \cref{Theorem_ShapeDeriv} at hand, we can apply an integration by parts on manifolds (cf. \cite[Ch. 2.2, Proposition 2.3]{taylor2013partial1}), either using that $\Manifold$ is closed or $V$ is vanishing on the boundary, to get
		\begin{align}\label{Temp14}
		\begin{split}
		\PrShpDeriv\TargetPrShp^\tau(\ShapeEmbedding)[V] =&
		\int_{\ShapeEmbedding(\Manifold)} \Bigg((g^\Manifold\circ\ShapeEmbedding^{-1}\cdot \det D^\tau\ShapeEmbedding^{-1})\cdot\Big(\nabla_\Shape(g^\Manifold\circ\ShapeEmbedding^{-1}\cdot \det D^\tau\ShapeEmbedding^{-1}) - \nabla_\Shape f_{\ShapeEmbedding} \Big)\Bigg)^T
		V \;\diff s,
		\end{split}
		\end{align}
		where we also used the assumption in \cref{Theorem_ExistenceGeneralProblem} that $f_\ShapeEmbedding$ is constant in each fiber and $\langle \operatorname{Tr}_{\vert \ShapeEmbedding(\Manifold)}[V], n\rangle_2 = 0$ to reformulate $\PrShpDeriv_m(f_\ShapeEmbedding)[V]$.
		Due to assumption \cref{Eq_FiberCriticalPoint} of fiber stationarity, we know \cref{Temp14} equals zero for all $V$ tangential on $\ShapeEmbedding(\Manifold)$ up to the boundary.
		So on the interior of $\ShapeEmbedding(\Manifold)$ we get
		\begin{equation}
		(g^\Manifold\circ\ShapeEmbedding^{-1}\cdot \det D^\tau\ShapeEmbedding^{-1} )
		\cdot \Big(
		\nabla_\Shape\big(g^\Manifold\circ\ShapeEmbedding^{-1}\cdot \det D^\tau\ShapeEmbedding^{-1}\big) - \nabla_\Shape f_{\ShapeEmbedding}
		\Big)
		\equiv 0.
		\end{equation}
		By assumption we have $g^\Manifold >0$, so together with non-vanishing determinant by  $\ShapeEmbedding\in\operatorname{Emb}(\Manifold, \R^{n+1})$, this implies
		\begin{equation}\label{Temp15}
		\nabla_\Shape(g^\Manifold\circ\ShapeEmbedding^{-1}\cdot \det D^\tau\ShapeEmbedding^{-1} - f_{\ShapeEmbedding}) \equiv 0.
		\end{equation}
		Since involved functions are Lipschitz continuous, and 
		as \cref{Theorem_GlobalSolutionParamTrackingStationaryPoints} assumes $\Manifold$ to be smooth and path connected, we can derive constancy of the involved term.
		However, after using a pull-back and normalization assumption \cref{Assumption_Theorem1fNormed}, we see that the discussed constant is $0$, giving us 
		\begin{equation}
		g^\Manifold\circ\ShapeEmbedding^{-1}\cdot \det D^\tau (\ShapeEmbedding)^{-1} 
		=
		f_{\ShapeEmbedding} \; \text{ on } \ShapeEmbedding(\Manifold).
		\end{equation}
		Since $\ShapeEmbedding$ is chosen from $\operatorname{Emb}_{\partial\Manifold}(\Manifold, \R^{n+1})$, it leaves the boundary fixed, finally giving '$(i) \implies (ii)$'.	

		Lastly, we easily see that '$(iii) \implies (i)$'.
		Since the complete vanishing of the pre-shape derivative \cref{Eq_Stationarity} in particular implies its vanishing for directions $V$ tangential to $\ShapeEmbedding(\Manifold)$, which is \cref{Eq_FiberCriticalPoint}, and concludes the proof.
	\end{proof}
%
	
\cref{Theorem_GlobalSolutionParamTrackingStationaryPoints} tells us that there are no stationary points other than global solutions to the pre-shape parameterization tracking problem  \cref{Eq_FinalGeneralParamTrackingProblem}.
This strongly resembles the situation for convex optimization problems, where the only candidates for local optimality are indeed global solutions.

Notice that \cref{Theorem_GlobalSolutionParamTrackingStationaryPoints} gives existence of stationary points $\ShapeEmbedding$ for each shape via existence result \cref{Theorem_ExistenceGeneralProblem}.
Since stationary points are global solutions, we can simply use the existence result \cref{Theorem_ExistenceGeneralProblem} for this.


Additionally, \cref{Theorem_GlobalSolutionParamTrackingStationaryPoints} guarantees that optimization with the tangential component of pre-shape derivative \cref{Eq_ShapeDerivTang} is sufficient to reach a globally optimal solution for (\ref{Eq_FinalGeneralParamTrackingProblem}).
This permits design of regularizations for shape optimization algorithms using pre-shape parameterization tracking with the property to leave the shape at hand invariant, while at the same time finding an optimal parameterization of the respective shape.

\section{Numerical Tests of Parameterization Tracking involving Pre-Shape Derivatives}\label{Section_NumericsGeneral}
We have now finished our introduction of pre-shape calculus and its application to parameterization tracking problems.
In order to test our theoretical results, we present three implementations of pre-shape gradient descent methods for the parameterization tracking problem.
For this we use the open-source finite-element software FEniCS (cf. \cite{LoggMardalEtAl2012a, AlnaesBlechta2015a}).
Construction of meshes is done via the free meshing software Gmsh (cf. \cite{geuzaine2007gmsh}).
We use a single core of an Intel(R) Core(Tm) i3-8100 CPU at 3.60 GHz featuring 16 GB RAM.
The single core runs at 800 MHz while the code is executed on a virtual machine.

In the following, we show three implementations solving the parameterization tracking problem \cref{Eq_FinalGeneralParamTrackingProblem} by using the tangential component of the pre-shape derivative seen in decomposition \cref{Eq_PrShpDerivativeTrackingDecompo}.
The solution process also features a simple backtracking line search, which scales the initial gradient of the current iteration $U_i$ according to a given factor $c$ and rescales it by $0.5$ if no sufficient decrease in $\TargetPrShp$ is apparent. 
In order to apply a descent algorithm, we are in need of pre-shape gradients. 
Because gradients are defined with respect to a sufficient bilinear form, we have to choose a form which fits our application.
Since we are in infinite dimensions, there is a multitude of non-equivalent choices to represent derivatives as gradients.
These can differ in resulting regularity of the gradients, and also in computational expense.
As a bilinear form, we choose the weak formulation of the linear elasticity as proposed in \cite{schulz2015Steklov}, which gives us $H^1$-regularity of pre-shape gradients.
By only using the shear component of the linear elasticity featuring the second Lam\'e parameter $\mu_{\text{elas}}$, and adding a zero order term, the gradient $U$ is calculated by solving its representing system\begin{align}\label{Eq_LinElasMetric}
\begin{split}
\alpha_{\text{LE}}\int_{\HoldAll}\mu_{\text{elas}}\cdot\epsilon(U):\epsilon(V)\;\diff x  + \alpha_{L^2}\cdot(U,V)_{L^2(\HoldAll)}  &= \PrShpDeriv\TargetPrShp^\tau(\ShapeEmbedding)[V] \qquad \forall V\in H^1_0(\HoldAll, \mathbb{R}^{n+1})\\
\epsilon(U) &= \frac{1}{2}(\nabla U^T + \nabla U)\\
\epsilon(V) &= \frac{1}{2}(\nabla V^T + \nabla V)\\
U &= 0 \qquad \text{ on } \partial\HoldAll.
\end{split}	
\end{align}
Here, we choose the weights $\alpha_{\text{LE}}, \alpha_{L^2} >0$. 
For $\mu_{\text{max}}, \mu_{\text{min}} > 0$, the second Lam\'e parameter $\mu_{\text{elas}}$ is chosen as the solution of the Poisson problem
\begin{equation}
\begin{split}
-\Delta \mu_{\text{elas}} &= 0 \qquad \;\;\; \text{in } \HoldAll \\
\mu_{\text{elas}} &= \mu_{\text{max}} \quad\, \text{on } \ShapeEmbedding(\Manifold) \\
\mu_{\text{elas}} &= \mu_{\text{min}} \quad\; \text{on } \partial\HoldAll.
\end{split}
\end{equation}
Solving \cref{Eq_LinElasMetric} on the entire hold-all domain $\HoldAll$ gives us a volume representation $U$ of the pre-shape derivative $\PrShpDeriv\TargetPrShp^\tau$.
The pre-shape gradient system \cref{Eq_LinElasMetric} is assembled in FEniCS and solved with a sparse LU method from PETSc used as a linear algebra backend.

The first example shows an application of the parameterization tracking problem to improve the quality of a given hold-all domain $\HoldAll = [0,1]^2 \subset \R^2$.
This is realized by using an unstructured $2$-dimensional volume mesh created via Gmsh featuring 4262 triangular cells and 2212 nodes.
Then we distort the mesh quality of this unstructured mesh by applying 
\begin{equation}
\ShapeEmbedding_0\left(
\begin{matrix}
x \\
y
\end{matrix}
\right)
=
\left(
\begin{matrix}
0.025\cdot\sin(25.5\cdot x) \\
0
\end{matrix}
\right)
\end{equation}
as a deformation to the interior of $\HoldAll$.
The deformed initial mesh $\ShapeEmbedding_0(\HoldAll)$ is depicted in \cref{Fig_GradientDesc2D}.
Notice that in this scenario the initial model $\Manifold$ is given by the hold-all domain $\HoldAll = [0,1]^2$ with non-trivial boundary $\partial\HoldAll$.
Therefore we are in the situation where the boundary $\partial\HoldAll$ is left invariant (cf. \cref{Eq_PreShapeSpaceNonTrivBoundary}).
Also, there is no normal component of the pre-shape derivative in this case, as the codimension of $\HoldAll\subset \R^2$ is zero.

To formulate the parameterization tracking problem \cref{Eq_FinalGeneralParamTrackingProblem}, we need to specify an initial point distribution $g^\Manifold$ and target $f_\ShapeEmbedding$.
Here, the target chosen is given by the constant 
\begin{equation}\label{Eq_Numerics_UniTarget}
f_\ShapeEmbedding \equiv \frac{1}{\int_{\HoldAll}1\; \diff x}.
\end{equation}
This ensures that a uniform cell volume distribution of the hold-all domain is targeted.
The initial point distribution $g^\Manifold$ is represented by using a continuous Galerkin Ansatz with linear elements. 
Degrees of freedom are situated at the mesh vertices and set to the average of inverses of surrounding cell volumes, i.e.
\begin{equation}
g^\Manifold(p_i) = \frac{1}{\vert \mathcal{C}_i\vert}\cdot\sum_{C\in \mathcal{C}_i}\frac{1}{\int_{C}1\; \diff x}.
\end{equation}
Here $p_i$ is a mesh vertex and $\mathcal{C}_i$ is the set of its neighboring cells $C$.
Finally, the resulting function is normed to satisfy the demanded normalization condition \cref{Assumption_Theorem1fNormed} of the parameterization tracking problem.
The initial point distribution estimated by this procedure is shown in \cref{Fig_GradientDesc2D} (a).

With both $g^\Manifold$ and $f_\ShapeEmbedding$ specified, the target $\TargetPrShp^\tau$ and its pre-shape derivative $\PrShpDeriv\TargetPrShp^\tau$ can be assembled.
For the gradient representation we use weights $\alpha_{\text{LE}} = 0.02$, $\alpha_{L^2} = 1$ and Lam\'e parameters $\mu_{\text{max}} = \mu_{\text{min}} = 1$, resulting in constant $\mu_{\text{elas}}=1$.
An initial scaling factor of $c= 0.01$ for the negative gradient during line search is applied.
The method successfully exits after 37.07s and 45 iterations.
Results of the pre-shape gradient descent using the tangential component of the pre-shape derivative and the described methodology are shown in \cref{Fig_GradientDesc2D} and \cref{Fig_NumericTargetGradNormsGraph}.

As our second example, we use the exact same parameters as in the first example.
Note that in particular, the starting mesh and therefore its initial volume distribution $g^\Manifold$ are the same as in the first example. 
We can see an illustration in \cref{Fig_GradientDesc2D} (a).
To show the general applicability of parameterization tracking, we replace the uniform target $f_\ShapeEmbedding$ from \cref{Eq_Numerics_UniTarget} by a more complicated non-uniform target
\begin{equation}
\hspace{-1cm}
f_\ShapeEmbedding=
\frac{\int_{[0,1]}\int_{[0,1]} g^\Manifold(x,y) \,\diff x\,\diff y}
{ \int_{[0,1]}\int_{[0,1]} 2 + \cos\Big(5\cdot 2\pi\cdot\big( (x - 0.35)^2 + 2\cdot(y-0.4)^2   \big)\Big) \,\diff x\,\diff y}
\cdot
\Bigg(2 + \cos\Big(5\cdot 2\pi\cdot\big( (x - 0.35)^2 + 2\cdot(y-0.4)^2   \big)\Big)\Bigg).
\end{equation}
The pre-shape gradient descent for this non-uniform target achieves convergence after 38.12s and 46 iterations.
We visualize an intermediate mesh, and the final mesh in \cref{Fig_GradientDesc2D} (c) and (d).
The target function values $\TargetPrShp^\tau(\ShapeEmbedding_i)$ and pre-shape gradient norms are shown in \cref{Fig_NumericTargetGradNormsGraph}.
Interestingly, notice that the intermediate mesh (c) looks like a superposition of the final and initial meshes (d) and (a).
Essentially, this is an illustration of snapshots from a discretized flow in the fiber of $\operatorname{Emb}(\HoldAll, \HoldAll)$ corresponding to the shape $\HoldAll$, which is abstractly visualized in \cref{Fig_EmbSpace}.
We see in \cref{Fig_GradientDesc2D} (d), that the prescribed non-uniform cell volume distribution is achieved, even though the initial mesh in \cref{Fig_GradientDesc2D} (d) has degenerate cells distributed on vertical lines.
\begin{figure}[h]
	\centering
	\begin{tabular}{cr}
		\begin{subfigure}{0.5\textwidth}
			\includegraphics[width=0.9\linewidth, height=5cm]{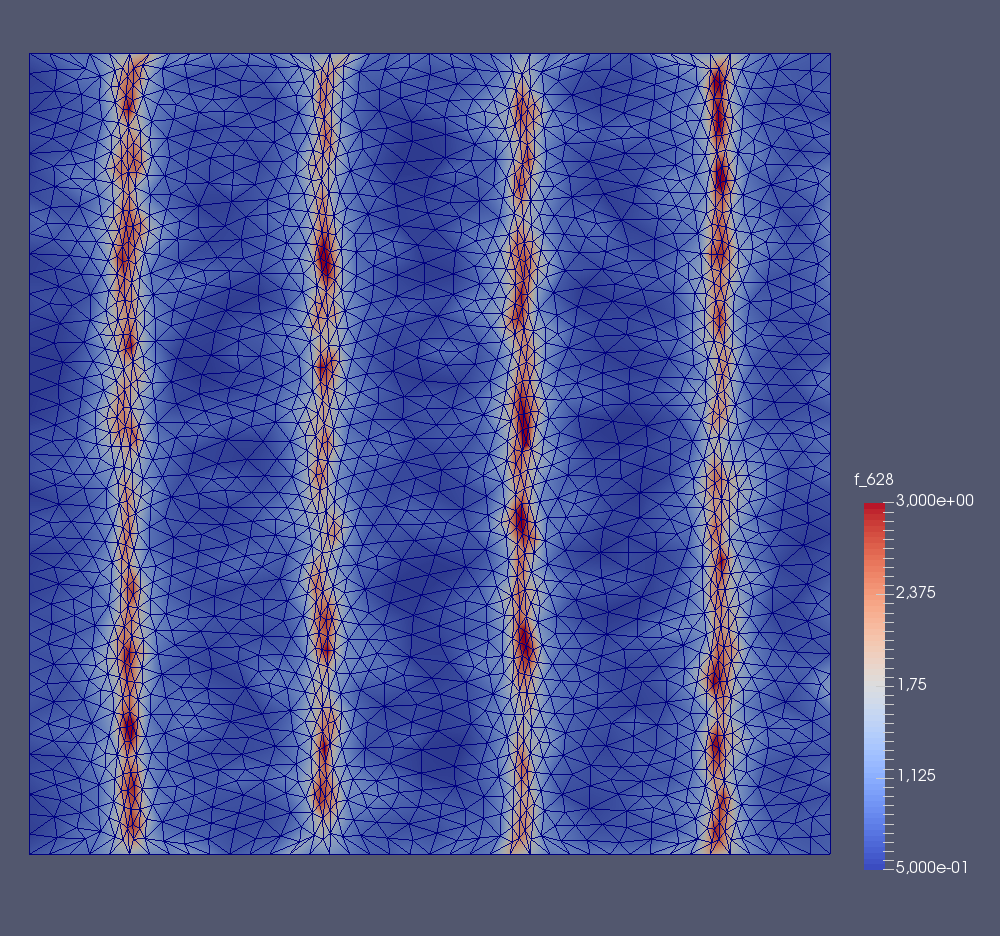}
			\subcaption{Initial mesh $\ShapeEmbedding_0(\Manifold)$}
		\end{subfigure} 
		&\begin{subfigure}{0.5\textwidth}
			\includegraphics[width=0.9\linewidth, height=5cm]{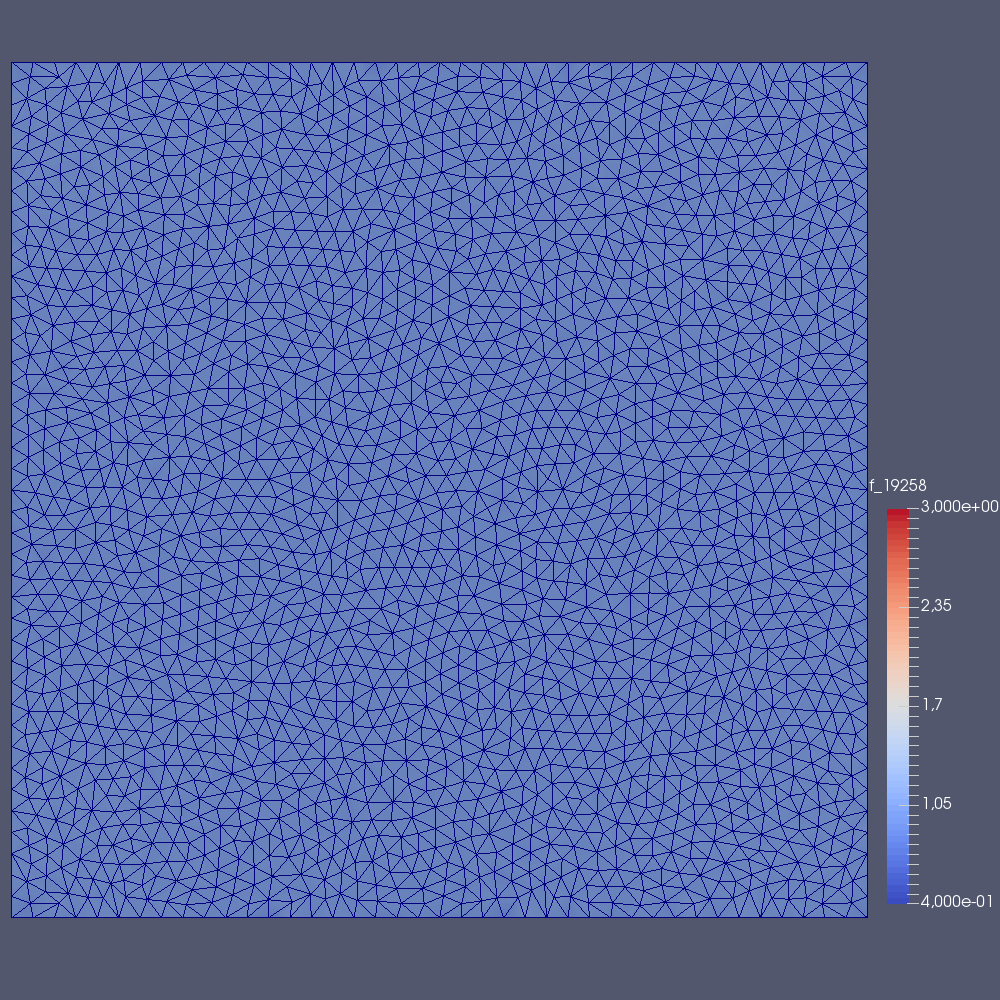}
			\subcaption{Final mesh $\ShapeEmbedding_{45}(\Manifold)$ for uniform target}
		\end{subfigure} 
		\\
		\begin{subfigure}{0.5\textwidth}
			\includegraphics[width=0.9\linewidth, height=5cm]{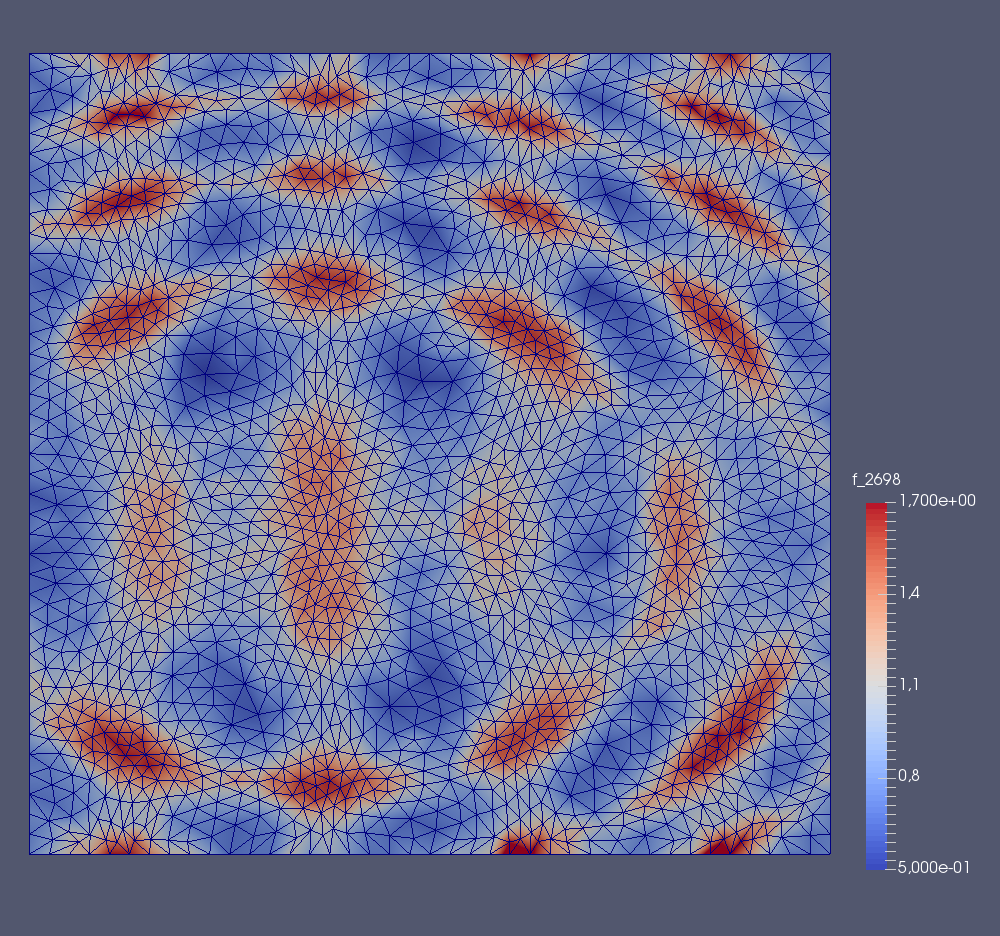}
			\subcaption{Intermediate mesh  $\ShapeEmbedding_{6}(\Manifold)$ for non-uniform target}
		\end{subfigure} 
		&\begin{subfigure}{0.5\textwidth}
			\includegraphics[width=0.9\linewidth, height=5cm]{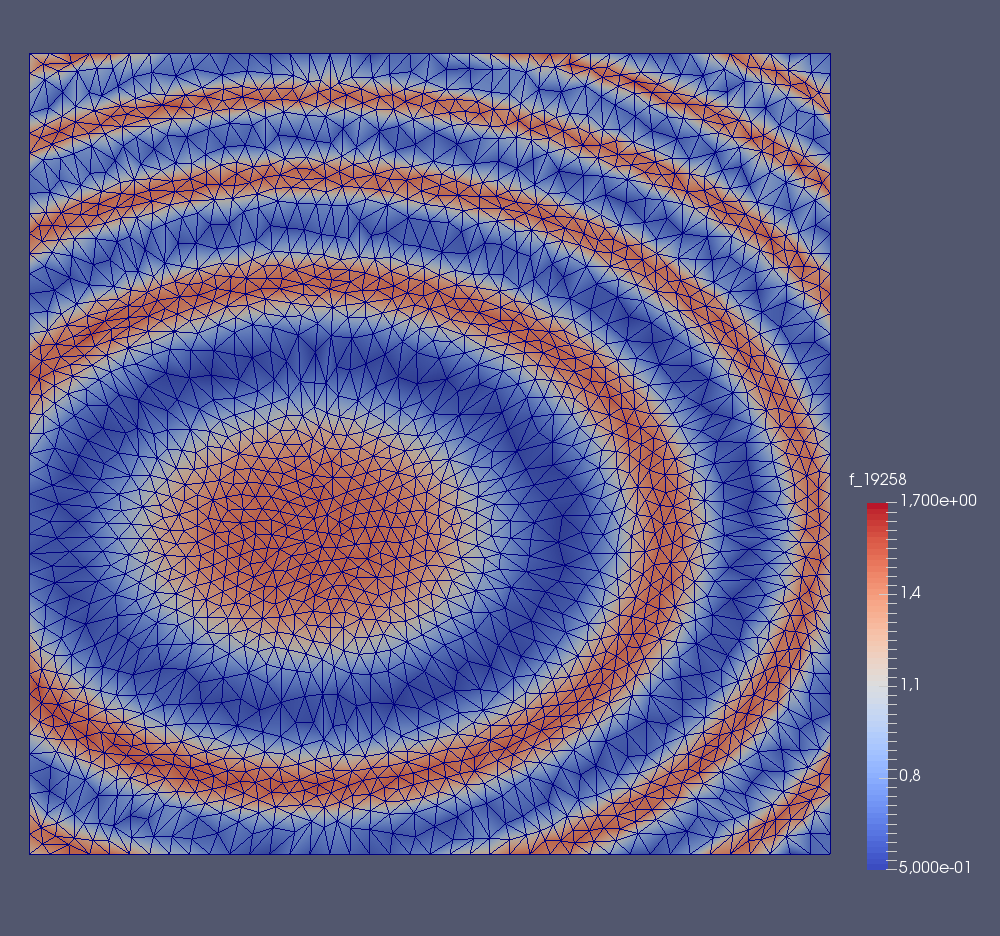}
			\subcaption{Final mesh  $\ShapeEmbedding_{46}(\Manifold)$ for non-uniform target}
		\end{subfigure} 
	\end{tabular}
	\caption{\label{Fig_GradientDesc2D}
		(a) Initial point distribution $g^\Manifold$ depicted by color on the distorted initial mesh $\ShapeEmbedding_0(\Manifold)$. \\
		(b) Final mesh $\ShapeEmbedding_{45}(\Manifold)$ for the uniform target after 45 pre-shape gradient descent iterations with associated point distribution $g^\Manifold\circ\ShapeEmbedding_{45}^{-1}\cdot \operatorname{det}D\ShapeEmbedding_{45}^{-1}$ shown in color. \\
		(c) Intermediate mesh $\ShapeEmbedding_{6}(\Manifold)$ for the non-uniform target after 6 pre-shape gradient descent iterations with associated point distribution $g^\Manifold\circ\ShapeEmbedding_{6}^{-1}\cdot \operatorname{det}D\ShapeEmbedding_{6}^{-1}$ shown in color. \\
		(d) Final mesh $\ShapeEmbedding_{46}(\Manifold)$ for the non-uniform target after 46 pre-shape gradient descent iterations with associated point distribution $g^\Manifold\circ\ShapeEmbedding_{46}^{-1}\cdot \operatorname{det}D\ShapeEmbedding_{46}^{-1}$ shown in color.}	
\end{figure}
Our third example applies the parameterization tracking problem to a sphere centered in the hold-all domain $\HoldAll = [0,1]^3\subset\R^{3}$.
It acts as the modeling manifold $\Manifold$ and its initial parameterization $\ShapeEmbedding_0$ is given by the identity embedding it into the hold-all domain.
The initial shape is a structured triangular surface mesh approximating a sphere centered in $(0.5, 0.5, 0.5)$ with radius $0.3$ using Gmsh. 
It consists of 6240 triangular cells and 3122 vertices on the surface.
The sphere is embedded in a hold-all domain consisting of 21838 tetraedic cells and 4059 nodes.

For the third example we target a non-uniform surface cell volume distribution given by
\begin{equation}
f_\ShapeEmbedding\left(
\begin{matrix}
x \\
y \\
z
\end{matrix}
\right)
=
1 + \frac{1}{2}\cdot\sin(10\cdot 2\pi\cdot x).
\end{equation}
The target function is of the form \cref{Eq_ExternalForceNormed}, which permits use of the material derivative formula \cref{Eq_MaterialDerivativefExtForce} for assembling the pre-shape derivative $\PrShpDeriv\TargetPrShp^\tau$.
At the same time, it satisfies normalization condition \cref{Assumption_Theorem1fNormed}.
Also, we set the initial vertex distribution to a constant
\begin{equation}
g^\Manifold \equiv \frac{1}{\int_{\Manifold} 1 \; \diff s}.
\end{equation}
In order to calculate covariant derivatives and associated Jacobian determinants, we apply a Gram-Schmidt algorithm to construct local tangential orthonormal bases.
Here, we choose weights $\alpha_{\text{LE}} = 0.02$, $\alpha_{L^2} = 1$ and Lam\'e parameters $\mu_{\text{max}} = 30$, $\mu_{\text{min}} = 5$ for gradient representation.
The line search employs an initial scaling factor $c= 0.001$ for the negative gradient.
For this scenario, the gradient representation of the pre-shape derivative, and the resulting surface mesh with its associated vertex distribution are depicted in \cref{Fig_GradientDesc3D}.
The method successfully exits after 1256.78s and 48 iterations.
Target function values $\TargetPrShp^\tau(\ShapeEmbedding_i)$ and pre-shape gradient norms are shown in \cref{Fig_NumericTargetGradNormsGraph}.
In light of \cref{Theorem_GlobalSolutionParamTrackingStationaryPoints}, we see that the gradient norm and target values converge simultaneously by using tangential components of $\PrShpDeriv\TargetPrShp^\tau$ only.
Also, the shape of the sphere is left invariant, which would not be the case if normal components or the full pre-shape derivative (cf. \cref{Fig_SurfGradDecompo}) were used.
\begin{figure}[h]
	\centering
	\begin{tabular}{cr}
		\begin{subfigure}{0.5\textwidth}
			\includegraphics[width=0.9\linewidth, height=5cm]{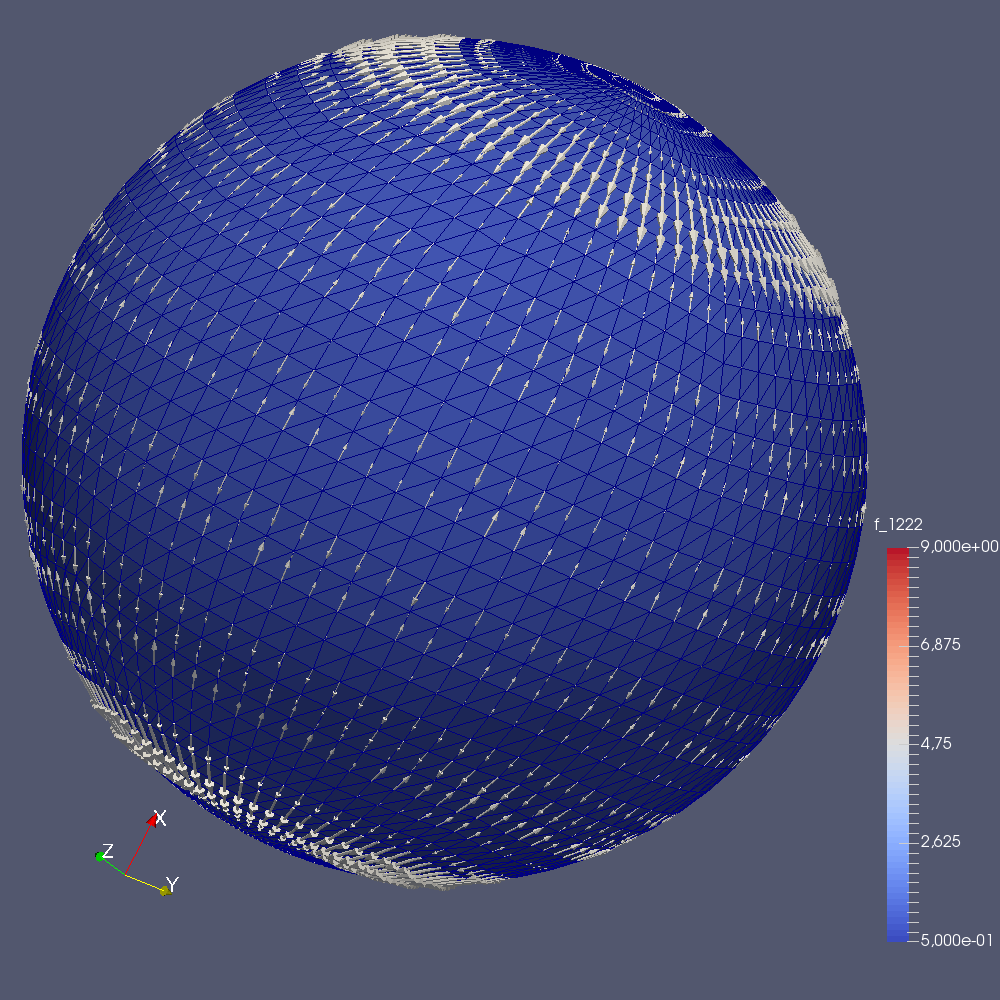}
			\subcaption{$g^\Manifold$ and $-U$ on $\ShapeEmbedding_0(\Manifold)$}
		\end{subfigure} 
		&\begin{subfigure}{0.5\textwidth}
			\includegraphics[width=0.9\linewidth, height=5cm]{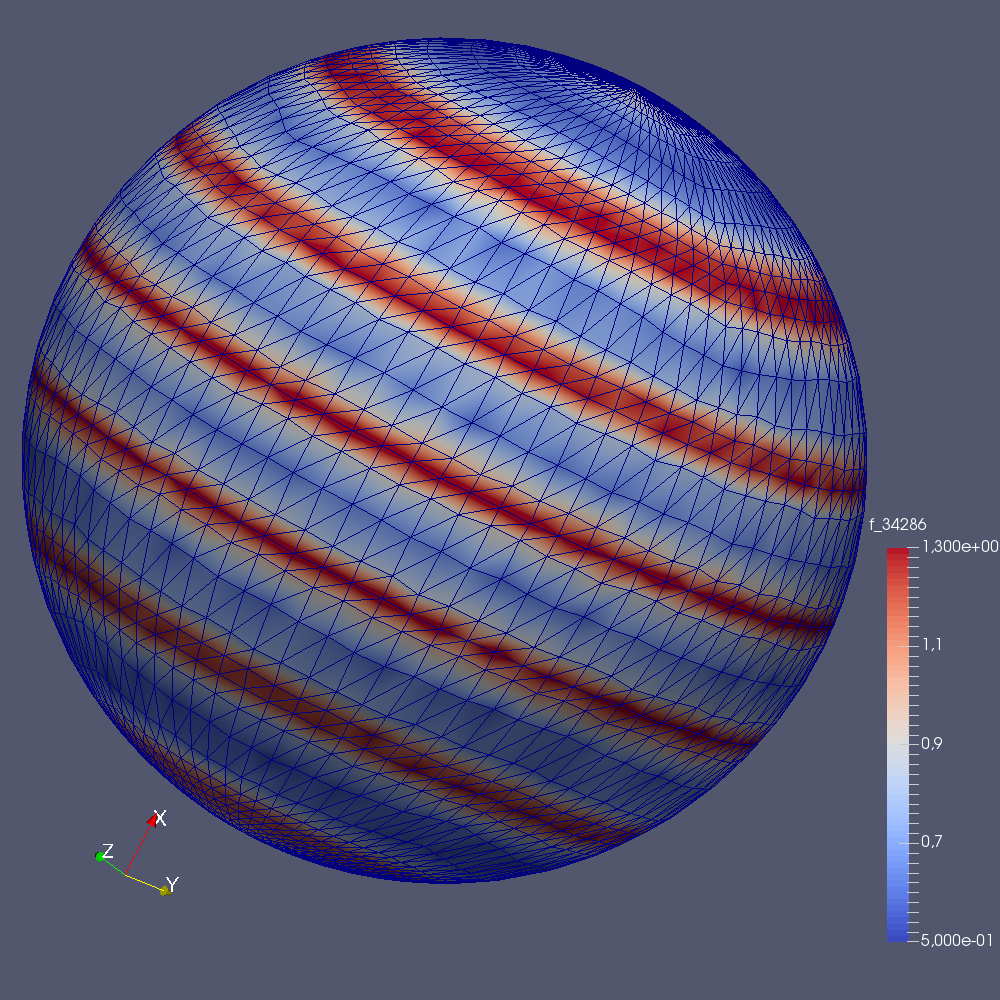}
			\subcaption{$g^\Manifold\circ\ShapeEmbedding_{48}^{-1}\cdot \operatorname{det}D\ShapeEmbedding_{48}^{-1}$ on  $\ShapeEmbedding_{48}(\Manifold)$}
		\end{subfigure} 
	\end{tabular}
	\caption{\label{Fig_GradientDesc3D}(a) Constant initial point distribution $g^\Manifold$ and negative pre-shape derivative $-\PrShpDeriv\TargetPrShp^\tau(\ShapeEmbedding_0)$ represented via \cref{Eq_LinElasMetric} on the initial surface mesh $\ShapeEmbedding_0(\Manifold)$ scaled by $0.03$. \\
		(b) Resulting surface mesh $\ShapeEmbedding_{48}(\Manifold)$ after 48 pre-shape gradient descent iterations with associated point distribution $g^\Manifold\circ\ShapeEmbedding_{48}^{-1}\cdot \operatorname{det}D\ShapeEmbedding_{48}^{-1}$ shown in color.}	
\end{figure}

\begin{figure}[h]
	\centering
	\begin{tabular}{cr}
		\begin{subfigure}{0.5\textwidth}
			\includegraphics[width=0.9\linewidth, height=5cm]{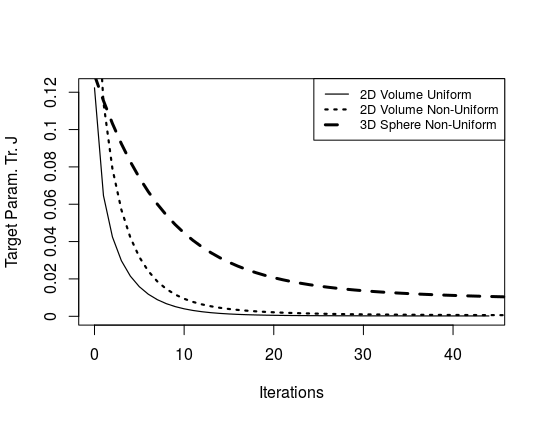}
			\subcaption{$\TargetPrShp^\tau(\ShapeEmbedding_i)$}
		\end{subfigure} 
		&\begin{subfigure}{0.5\textwidth}
			\includegraphics[width=0.9\linewidth, height=5cm]{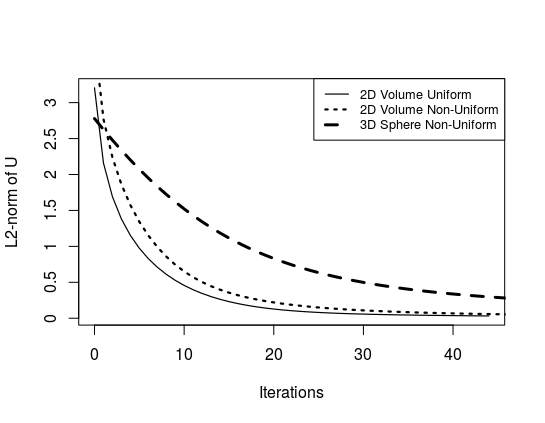}
			\subcaption{$\Vert U_i \Vert_{L^2(\HoldAll, \R^{n+1})}$}
		\end{subfigure} 
	\end{tabular}
	\caption{\label{Fig_NumericTargetGradNormsGraph}(a) Values for the pre-shape parameterization tracking target $\TargetPrShp^\tau(\ShapeEmbedding_i)$ for iterates  $\ShapeEmbedding_i$ of the tangential pre-shape derivative component based steepest descent method. Target for the 3D sphere case is scaled by 3. \\
		(b) $L^2$-norms $\Vert U_i \Vert_{L^2(\HoldAll, \R^{n+1})}$ of the gradient representations $U_i$ of pre-shape derivatives for each iterate $\ShapeEmbedding_i$.
		Gradient norms for the 3D sphere case are scaled by 25.}	
\end{figure}

\section{Conclusion and Outlook}
In this work we introduced a unified framework to formulate shape optimization and mesh quality optimization problems.
A calculus for pre-shape derivatives, which act in normal and tangential directions, and according structure theorems were derived.
In particular, rules and problem formulations from classical shape optimization carry over to the pre-shape setting.
These techniques were tested on a class of parameterization tracking problems.
Resulting numerical implementations of a gradient descent method based on decomposed pre-shape derivatives show promising results for optimization of volume- and surface mesh quality.

In forthcoming works we will derive efficient algorithms harnessing the opportunity to simultaneously solve shape optimization problems and improve mesh quality of shapes and ambient spaces.
For this, can will design various targets for parameterization tracking, giving a desired type of mesh for the user. 
Also, we will define pre-shape Hessians to harness second order information.
(Quasi-)Newton methods in the context of pre-shape optimization, as well as an optimal choice of pre-shape gradient representations, will enhance the performance of these algorithms.

\section*{Acknowledgements}
The authors would like to thank Leonhard Frerick (Trier University) and Jochen Wengenroth (Trier University) for a helpful and interesting discussion about differentiability in infinite dimensions.
This work has been supported by the BMBF (Bundesministerium f\"{u}r Bildung und Forschung) within the collaborative project GIVEN (FKZ: 05M18UTA).
Further, the authors acknowledge the support of the DFG research training group 2126 on algorithmic optimization.


\bibliographystyle{plain}
\bibliography{citations.bib}

\begin{thebibliography}{10}

\bibitem{algorri1996mesh}
M.E. Algorri and F.~Schmitt.
\newblock {Mesh Simplification}.
\newblock In {\em Computer Graphics Forum}, volume~15, pages 77--86. Wiley
  Online Library, 1996.

\bibitem{AlnaesBlechta2015a}
M.S. Aln{\ae}s, J.~Blechta, J.~Hake, A.~Johansson, B.~Kehlet, A.~Logg,
  C.~Richardson, J.~Ring, M.E. Rognes, and G.N. Wells.
\newblock The {F}{E}ni{C}{S} project version 1.5.
\newblock {\em Archive of Numerical Software}, 3(100), 2015.

\bibitem{banyaga1974formes}
A.~Banyaga.
\newblock {Formes-volume sur les vari{\'e}t{\'e}sa bord}.
\newblock {\em Enseignement Math}, 20(2):127--131, 1974.

\bibitem{bauer2014overview}
M.~Bauer, M.~Bruveris, and P.W. Michor.
\newblock {Overview of the Geometries of Shape Spaces and Diffeomorphism
  Groups}.
\newblock {\em Journal of Mathematical Imaging and Vision}, 50(1-2):60--97,
  2014.

\bibitem{berggren2010unified}
M.~Berggren.
\newblock A {U}nified {D}iscrete--{C}ontinuous {S}ensitivity {A}nalysis
  {M}ethod for {S}hape {O}ptimization.
\newblock In {\em Applied and {N}umerical {P}artial {D}ifferential
  {E}quations}, volume~15 of {\em Computational {M}ethods in {A}pplied
  {S}ciences}, pages 25--39. Springer, 2010.

\bibitem{binz1981manifold}
E.~Binz and H.R. Fischer.
\newblock {The manifold of embeddings of a closed manifold}.
\newblock In {\em Differential geometric methods in mathematical physics},
  pages 310--325. Springer, 1981.

\bibitem{bochev1996analysis}
P.~Bochev, G.~Liao, and G.~dela Pena.
\newblock {Analysis and Computation of Adaptive Moving Grids by Deformation}.
\newblock {\em Numerical Methods for Partial Differential Equations: An
  International Journal}, 12(4):489--506, 1996.

\bibitem{cai2004adaptive}
X.~Cai, B.~Jiang, and G.~Liao.
\newblock {Adaptive Grid Generation Based on the Least-Squares Finite-Element
  Method}.
\newblock {\em Computers \& Mathematics with Applications}, 48(7-8):1077--1085,
  2004.

\bibitem{cao1999study}
W.~Cao, W.~Huang, and R.D. Russell.
\newblock {A Study of Monitor Functions for Two-Dimensional Adaptive Mesh
  Generation}.
\newblock {\em SIAM Journal on Scientific Computing}, 20(6):1978--1994, 1999.

\bibitem{cao2002moving}
W.~Cao, W.~Huang, and R.D. Russell.
\newblock {A Moving Mesh Method Based on the Geometric Conservation Law}.
\newblock {\em SIAM Journal on Scientific Computing}, 24(1):118--142, 2002.

\bibitem{dacorogna1990partial}
B.~Dacorogna and J.~Moser.
\newblock {On a Partial Differential Equation Involving the Jacobian
  Determinant}.
\newblock In {\em Annales de l'Institut Henri Poincare (C) Non Linear
  Analysis}, volume~7, pages 1--26. Elsevier, 1990.

\bibitem{Delfour-Zolesio-2001}
M.C. Delfour and J.-P. Zol\'esio.
\newblock {\em {Shapes and Geometries: Metrics, Analysis, Differential
  Calculus, and Optimization}}, volume~22 of {\em Advances in Design and
  Control}.
\newblock SIAM, 2nd edition, 2001.

\bibitem{dziuk1990algorithm}
G.~Dziuk.
\newblock {An Algorithm for Evolutionary Surfaces}.
\newblock {\em Numerische Mathematik}, 58(1):603--611, 1990.

\bibitem{dziuk1999discrete}
G.~Dziuk and J.~Hutchinson.
\newblock {The Discrete Plateau Problem: Algorithm and Numerics}.
\newblock {\em Mathematics of Computation}, 68(225):1--23, 1999.

\bibitem{ebin1970groups}
D.G. Ebin and J.~Marsden.
\newblock {Groups of Diffeomorphisms and the Motion of an Incompressible
  Fluid}.
\newblock {\em Ann. Math}, 92(1):102--163, 1970.

\bibitem{etling2018first}
T.~Etling, R.~Herzog, E.~Loayza, and G.~Wachsmuth.
\newblock First and second order shape optimization based on restricted mesh
  deformations.
\newblock {\em arXiv preprint arXiv:1810.10313}, 2018.

\bibitem{field1988laplacian}
D.A. Field.
\newblock {Laplacian Smoothing and Delaunay Triangulations}.
\newblock {\em Communications in Applied Numerical Methods}, 4(6):709--712,
  1988.

\bibitem{freitag1997combining}
L.A. Freitag.
\newblock {On combining Laplacian and optimization-based mesh smoothing
  techniques}.
\newblock Technical report, Argonne National Lab., IL (United States), 1997.

\bibitem{frey1999surface}
P.J. Frey and H.~Borouchaki.
\newblock {Surface Mesh Quality Evaluation}.
\newblock {\em International journal for numerical methods in engineering},
  45(1):101--118, 1999.

\bibitem{geuzaine2007gmsh}
C.~Geuzaine and J.-F. Remacle.
\newblock {Gmsh: A Three-Dimensional Finite Element Mesh Generator with
  Built-In Pre-and Post-Processing Facilities}.
\newblock In {\em Proceedings of the Second Workshop on Grid Generation for
  Numerical Computations, Tetrahedron II}, 2007.

\bibitem{grajewski2009mathematical}
M.~Grajewski, M.~K{\"o}ster, and S.~Turek.
\newblock {Mathematical and Numerical Analysis of a Robust and Efficient Grid
  Deformation Method in the Finite Element Context}.
\newblock {\em SIAM Journal on Scientific Computing}, 31(2):1539--1557, 2009.

\bibitem{grajewski2010numerical}
M.~Grajewski, M.~K{\"o}ster, and S.~Turek.
\newblock {Numerical Analysis and Implementational Aspects of a New Multilevel
  Grid Deformation Method}.
\newblock {\em Applied numerical mathematics}, 60(8):767--781, 2010.

\bibitem{HaslingerMakinen}
J.~Haslinger and {R.A.E.} M\"{a}kinen.
\newblock {\em {Introduction to Shape Optimization: Theory, Approximation, and
  Computation}}, volume~7 of {\em Advances in Design and Control}.
\newblock SIAM, 2003.

\bibitem{hernot2018shape}
A.~Hernot and M.~Pierre.
\newblock {\em {Shape Variation and Optimization}}, volume~28.
\newblock EMS Tracts in Mathematics, 2018.

\bibitem{johnston1991automatic}
B.P. Johnston, J.M.~Sullivan Jr, and A.~Kwasnik.
\newblock {Automatic Conversion of Triangular Finite Element Meshes to
  Quadrilateral Elements}.
\newblock {\em International Journal for Numerical Methods in Engineering},
  31(1):67--84, 1991.

\bibitem{kendall2009shape}
D.G. Kendall, D.~Barden, T.K. Carne, and H.~Le.
\newblock {\em {Shape and Shape Theory}}, volume 500.
\newblock John Wiley \& Sons, 2009.

\bibitem{kriegl1997convenient}
A.~Kriegl and P.W. Michor.
\newblock {\em {The Convenient Setting of Global Analysis}}, volume~53.
\newblock American Mathematical Soc., 1997.

\bibitem{laurain2020optimal}
A.~Laurain and S.~Walker.
\newblock {Optimal Control of Volume-Preserving Mean Curvature Flow}.
\newblock 2020.

\bibitem{Lee}
J.M. Lee.
\newblock {\em {Manifolds and Differential Geometry}}, volume 107 of {\em
  Graduate Studies in Mathematics}.
\newblock American Mathematical Society, 2009.

\bibitem{lee2013smooth}
J.M. Lee.
\newblock {Introduction to Smooth Manifolds}.
\newblock Springer, 2013.

\bibitem{liao1992new}
G.~Liao and D.~Anderson.
\newblock {A New Approach to Grid Generation}.
\newblock {\em Applicable Analysis}, 44(3-4):285--298, 1992.

\bibitem{liu1998adaptive}
F.~Liu, S.~Ji, and G.~Liao.
\newblock {An Adaptive Grid Method and its Application to Steady Euler Flow
  Calculations}.
\newblock {\em SIAM Journal on Scientific Computing}, 20(3):811--825, 1998.

\bibitem{LoggMardalEtAl2012a}
A.~Logg, K.-A. Mardal, G.N. Wells, et~al.
\newblock {\em {Automated Solution of Differential Equations by the Finite
  Element Method}}.
\newblock Springer, 2012.

\bibitem{luft2020pre2}
D.~Luft and V.~Schulz.
\newblock {Simultaneous Shape and Mesh Quality Optimization using Pre-Shape
  Calculus}.
\newblock {\em arXiv preprint arXiv:2103.15109}, 2021.

\bibitem{michor2007overview}
P.W. Michor and D.~Mumford.
\newblock {Overview of the Geometries of Shape Spaces and Diffeomorphism
  Groups}.
\newblock {\em {Applied and Computational Harmonic Analysis}}, 23(1):74--113,
  2007.

\bibitem{moser1965volume}
J.~Moser.
\newblock {On the Volume Elements on a Manifold}.
\newblock {\em Transactions of the American Mathematical Society},
  120(2):286--294, 1965.

\bibitem{onyshkevych2020mesh}
S.~Onyshkevych and M.~Siebenborn.
\newblock {Mesh Quality Preserving Shape Optimization using Nonlinear Extension
  Operators}.
\newblock {\em arXiv preprint arXiv:2006.04420}, 2020.

\bibitem{pinkall1993computing}
U.~Pinkall and K.~Polthier.
\newblock {Computing Discrete Minimal Surfaces and their Conjugates}.
\newblock {\em Experimental Mathematics}, 2(1):15--36, 1993.

\bibitem{MichorMumford2}
{P.W. Michor and D. Mumford}.
\newblock {Vanishing Geodesic Distance on Spaces of Submanifolds and
  Diffeomorphisms}.
\newblock {\em Documenta Mathematica}, 10:217--245, 2005.

\bibitem{rudin1991functional}
W.~Rudin.
\newblock {Functional Analysis}.
\newblock {\em Internat. Ser. Pure Appl. Math}, 1991.

\bibitem{schmidt2014two}
S.~Schmidt.
\newblock {A Two Stage CVT/Eikonal Convection Mesh Deformation Approach for
  Large Nodal Deformations}.
\newblock {\em arXiv preprint arXiv:1411.7663}, 2014.

\bibitem{Schulz}
V.~H. Schulz.
\newblock {A Riemannian view on shape optimization}.
\newblock {\em Foundations of Computational Mathematics}, 14(3):483--501, 2014.

\bibitem{schulz2015Steklov}
V.H. Schulz, M.~Siebenborn, and K.~Welker.
\newblock {Efficient PDE Constrained Shape Optimization based on
  Steklov-Poincar\'{e} Type Metrics}.
\newblock {\em SIAM Journal on Optimization}, 26(4):2800--2819, 2016.

\bibitem{shontz2003mesh}
S.M. Shontz and S.A.Vavasis.
\newblock {A Mesh Warping Algorithm Based on Weighted Laplacian Smoothing}.
\newblock In {\em IMR}, pages 147--158, 2003.

\bibitem{smolentsev2007diffeomorphism}
N.K. Smolentsev.
\newblock Diffeomorphism groups of compact manifolds.
\newblock {\em Journal of Mathematical Sciences}, 146(6):6213--6312, 2007.

\bibitem{sturm2016structure}
K.~Sturm.
\newblock {A Structure Theorem for Shape Functions defined on Submanifolds}.
\newblock {\em arXiv preprint arXiv:1604.04840}, 2016.

\bibitem{taylor2013partial1}
M.~Taylor.
\newblock {\em {Partial Differential Equations I: Basic Theory}}, volume 115.
\newblock Springer Science \& Business Media, 2nd edition, 2011.

\bibitem{wan2006numerical}
D.~Wan and S.~Turek.
\newblock {Numerical Simulation of Coupled Fluid-Solid Systems by Fictitious
  Boundary and Grid Deformation Methods}.
\newblock In {\em Numerical Mathematics and Advanced Applications}, pages
  906--914. Springer, 2006.

\bibitem{welker2017suitable}
K.~Welker.
\newblock {Suitable Spaces for Shape Optimization}.
\newblock {\em Applied Mathematics \& Optimization}, pages 1--34, 2021.

\bibitem{zhang2009surface}
Y.~Zhang, C.~Bajaj, and G.~Xu.
\newblock {Surface Smoothing and Quality Improvement of
  Quadrilateral/Hexahedral Meshes with Geometric Flow}.
\newblock {\em Communications in Numerical Methods in Engineering},
  25(1):1--18, 2009.

\bibitem{zhou2017novel}
Z.~Zhou, X.~Chen, and G.~Liao.
\newblock {A Novel Deformation Method for Higher Order Mesh Generation}.
\newblock {\em arXiv preprint arXiv:1710.00291}, 2017.

\end{thebibliography}

\end{document}